\documentclass[11pt,a4paper]{amsart}

\usepackage[utf8]{inputenc}
\usepackage{amssymb,amsmath,amsbsy,amscd,amsfonts,amsthm,latexsym}
\usepackage[mathcal]{eucal}
\usepackage{times, euler}
\usepackage{eurosym}

\usepackage{mathtools}

\usepackage{stackrel, enumerate}
\usepackage{mathabx}

\usepackage[usenames]{color}
 
\usepackage[top=3cm,bottom=3cm,left=2cm,right=2cm,footskip=17mm]{geometry}

\usepackage{hyperref}

\usepackage{tikz,pgf}
\usepackage{tikzit}

\tikzstyle{1function}=[fill=white, draw=black, shape=rectangle, minimum width=0.75cm, minimum height=1 cm]
\tikzstyle{2function}=[fill=white, draw=black, shape=rectangle, minimum width=1cm, minimum height=1 cm]
\tikzstyle{3function}=[fill=white, draw=black, shape=rectangle, minimum width=1.5cm, minimum height=1cm]
\tikzstyle{multi function}=[fill=white, draw=black, shape=rectangle, minimum width=5cm, minimum height=1cm]
\tikzstyle{multi function small}=[fill=white, draw=black, shape=rectangle, minimum width=4cm, minimum height=1cm]
\tikzstyle{Multi function smaller}=[fill=white, draw=black, shape=rectangle, minimum width=3.5 cm, minimum height=1 cm]
\tikzstyle{graph node}=[fill=black, draw=black, shape=circle]
\tikzstyle{smallbox}=[fill=white, draw=black, shape=rectangle, minimum width=0.5 cm, minimum height=0.5cm]
\tikzstyle{Circle1}=[fill=white, draw=black, shape=circle, minimum size=26 mm]

\tikzstyle{black dashed}=[-, draw=black, dashed]
\tikzstyle{new edge style 0}=[thick, ->]
\tikzstyle{thickedge}=[thick, -]

\hypersetup{
    colorlinks=true,
    linkcolor=blue,
    filecolor=cyan,      
    urlcolor=blue,
    citecolor=magenta
}

\definecolor{darkgreen}{rgb}{0,0.6,0}

\usepackage{xy}
\xyoption{all}

\numberwithin{equation}{section}

\theoremstyle{plain}
    \newtheorem{theorem}[equation]{Theorem}
    \newtheorem{lemma}[equation]{Lemma}
    \newtheorem{lemma-definition}[equation]{Lemma-Definition}
    \newtheorem{corollary}[equation]{Corollary}
    \newtheorem{proposition}[equation]{Proposition}

\theoremstyle{definition}
    \newtheorem{definition}[equation]{Definition}

    \newtheorem{remark}[equation]{Remark}

\newcommand{\C}{\mathcal{C}}
\newcommand{\N}{\mathbb{N}}
\newcommand{\Z}{\mathbb{Z}}

\newcommand{\F}{\mathbb{F}}

\renewcommand{\H}{\mathcal H}

\renewcommand{\phi}{\varphi}
\renewcommand{\epsilon}{\varepsilon}

\newcommand{\ol}{\overline}

\newcommand{\ot}{\otimes}

\DeclareMathOperator{\Irr}{Irr}
\DeclareMathOperator{\Hom}{Hom}
\DeclareMathOperator{\End}{End}
  
\DeclareMathOperator{\Ker}{Ker}

\DeclareMathOperator{\Aut}{Aut}
\DeclareMathOperator{\Ind}{Ind}

\DeclareMathOperator{\GL}{GL}

\DeclareMathOperator{\Rep}{\mathrm{Rep}}

\DeclareMathOperator{\Mod}{Mod}

\newcommand{\one}{\mathbf{1}}

\newcommand{\Vect}{\operatorname{Vect}}

\newcommand{\modd}{\, \mathrm{mod} \, }

\newcommand{\Ow}{\mathfrak o}	  

\newcommand{\wt}{\widetilde}

\newcommand{\la}{\lambda}
\newcommand{\Id}{\text{Id}}

\renewcommand{\Im}{\text{Im}}

\newcommand{\M}{\text{M}}

\newcommand{\Ab}{\text{Ab}}
\renewcommand{\preceq}{\preccurlyeq}
\renewcommand{\P}{\mathbb{P}}
\newcommand{\Autc}{\text{Aut}_{\mathcal{C}_{+1}}}

\begin{document}

	\title[Functor morphing image ]{The image of functor morphing}


\author{Ehud Meir}
\address{Institute of Mathematics, University of Aberdeen, Fraser Noble Building, Aberdeen AB24 3UE, UK}
  \email{ehud.meir@abdn.ac.uk}

	\begin{abstract}  
Functor morphing provides a method to translate complex representations of automorphism groups of finite modules over finite rings to representations of automorphism groups of functors in some abelian category. In this paper we give an explicit criterion for a representation to be in the image of functor morphing using the action of parabolic subgroups. We then demonstrate this criterion on Borel groups of finite fields. 
	\end{abstract}
	\maketitle

\section{Introduction}
Let $M$ be a finite module over a finite ring $R$. In \cite{CMO4} Tyrone Crisp, Uri Onn, and the author, introduced a new technique called functor morphing to study the representation theory of $\Aut_R(M)$. Assume that $M=M_1^{a_1}\oplus\cdots\oplus M_n^{a_n}$ is the decomposition of $M$ into a direct sum of non-isomorphic indecomposable modules. Functor morphing associates to every irreducible representation $V$ of $\Aut_R(M)$ an additive functor $F:\langle M_1,\ldots, M_n\rangle\to Ab$ and an irreducible representation $\wt{V}$ of $\Aut(F)$. Here $\langle M_1,\ldots, M_n\rangle$ stands for the full additive subcategory of $R-\modd$ generated by $M_1,\ldots, M_n$. 
In most cases the procedure of functor morphing reduces the numerical complexity of the representation $V$. The functor $F$ is the minimal (with respect to the subquotient relation) functor that satisfies $\Hom_{\Aut_R(M)}(KF(M),V)\neq 0$, and the representation $\wt{V}$ is given by $\Hom_{\Aut_R(M)}(KF(M),V)$, where $\Aut(F)$ acts on $KF(M)$ in the natural way. The fact that $F$ is uniquely defined and that $\wt{V}$ is irreducible follows from a construction of the author of symmetric monoidal categories using invariant theory \cite{meir1}. The study of the representation theory of the groups $\Aut_R(M)$ stems from the representation theory of the groups $\GL_n(\Ow/(\pi^l))$, where $\Ow$ is a discrete valuation ring with uniformizer $\pi$ and finite quotient field. One of the main philosophical consequences of functor morphing is the fact that the representation theory of these group contains in fact the representation theory of $\Aut_R(M)$ for very general $R$ and $M$. See also \cite{CMO1,CMO2,CMO3} for a study of the representation theory of $\GL_n(\Ow/(\pi^l))$ using generalizations of Harish-Chandra functors and Clifford Theory.

Functor morphing thus gives a new way to study the representation theory of a very large class of finite groups. 
We get a stratification $\Irr(\Aut_R(M)) \cong \sqcup_F \Irr^{M,fm}(\Aut(F))$,  where $\Irr^{M,fm}(\Aut(F))$ stands for the irreducible representations of $\Aut(F)$ that arise from functor morphing of representations of $\Aut_R(M)$. This raises the question of what irreducible representations actually appear in $\Irr^{M,fm}(\Aut(F))$. The goal of the present paper is to answer this question, and provide concrete examples for the family of upper triangular groups $B_n(\F_q)$. 

To state our main result, we introduce some terminology. Let $M=\bigoplus_i M_i^{a_i}$, $F$, and $\C=\langle M_1,\ldots, M_n\rangle$, be as above. We say that $(X\stackrel{f}{\to} Y)$ is a minimal presentation of $F$ if $$\Hom_{R}(Y,-)\stackrel{f^*}{\to}\Hom_{R}(X,-)\to F\to 0$$ is a minimal resolution of $F$ in the category $\C_{+1} = Fun(\C,\Ab)$. 
We have already seen in \cite{CMO4} the connection between the minimal resolution of $F$ and functor morphing. In Section \ref{sec:lin.rel} we prove the following:
\begin{lemma}[See Lemma \ref{lem:XM}]
A necessary condition for representations of $\Autc(F)$ to appear as functor morphing of representations of $\Aut_R(M)$ is that if $(X\stackrel{f}{\to} Y)$ is a minimal presentation of $F$ then $X$ is a direct summand of $M$.
\end{lemma}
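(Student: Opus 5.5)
The plan is to use the minimality of $F$ to show that $F$ is generated by a single element of $F(M)$. Then $F$ is a quotient of the representable functor $\Hom_R(M,-)$, and the statement follows from uniqueness of projective covers in $\C_{+1}$ together with Yoneda.

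Let $V$ be an irreducible representation of $\Aut_R(M)$ whose functor morphing is $(F,\wt V)$. By the description recalled in the introduction, $F$ is minimal with respect to the subquotient relation among functors $G$ satisfying $\Hom_{\Aut_R(M)}(KG(M),V)\neq 0$.

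\textbf{Step 1: find an orbit that carries $V$.} The group $\Aut_R(M)$ acts on the finite set $F(M)$. Hence the permutation module $KF(M)$ decomposes as $\bigoplus_{\mathcal O} K\mathcal O$, the sum running over the orbits. Since $\Hom_{\Aut_R(M)}(KF(M),V)\neq 0$, there is an orbit $\mathcal O=\Aut_R(M)\cdot x$ with $x\in F(M)$ and $\Hom_{\Aut_R(M)}(K\mathcal O,V)\neq 0$.

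\textbf{Step 2: pass to the subfunctor generated by $x$.} By Yoneda, $x$ corresponds to a natural transformation $\Hom_R(M,-)\to F$. Let $G_x\subseteq F$ be its image. Then $G_x(M)$ contains $x$. Since $G_x(M)$ is $\Aut_R(M)$-stable, it contains the whole orbit $\mathcal O$. So $K\mathcal O$ is a direct summand of $KG_x(M)$ (again as a sum of orbit modules), which gives $\Hom_{\Aut_R(M)}(KG_x(M),V)\neq 0$. Since $G_x$ is a subfunctor, hence a subquotient, of $F$, minimality forces $G_x=F$. Therefore $\Hom_R(M,-)\to F$ is an epimorphism in $\C_{+1}$.

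\textbf{Step 3: compare with the minimal presentation.} The functor $\Hom_R(M,-)$ is projective in $\C_{+1}$ by Yoneda. Because $\C$ is Krull--Schmidt with finite Hom sets, projective covers exist in $\C_{+1}$, and any epimorphism from a projective onto $F$ has the projective cover as a direct summand. In a minimal resolution $\Hom_R(Y,-)\to\Hom_R(X,-)\to F\to 0$, the term $\Hom_R(X,-)\to F$ is the projective cover. Hence $\Hom_R(X,-)$ is a direct summand of $\Hom_R(M,-)$. By Yoneda this summand is given by an idempotent in $\End_R(M)^{op}$, which splits in $\C$. Thus $X$ is isomorphic to a direct summand of $M$, as required.

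\textbf{Main obstacle.} The delicate point is Step 2. It needs the precise form of minimality from \cite{CMO4}/\cite{meir1}, namely that passing to a subfunctor counts as a subquotient. It also needs the multiplicity bound to come from a \emph{single} generator. Taking the subfunctor generated by all of $F(M)$ would only show $X\in\mathrm{add}(M)$, not that $X$ is a summand of $M$. This is exactly why the argument is routed through an orbit carrying $V$. I would verify carefully that the decomposition of $KF(M)$ into orbit modules is compatible with restricting to $KG_x(M)$.
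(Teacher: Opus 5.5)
Your proof is correct, but it takes a different route from the paper. The paper argues by contraposition using the linear-relations machinery of Section \ref{sec:lin.rel}:
\begin{itemize}
\item if $X$ is not a direct summand of $M$, then $F$ is $M$-redundant by Proposition \ref{prop:redundancycriterion}, so $L_{H_{\Id_F}}=0$;
\item expanding $L_{H_{\Id_F}}$ by inclusion--exclusion and using $\ol{T_G}=0$ for every $G$ that is not $F$-tight (the description of the kernel of $\Theta_F(M)$ taken from \cite{CMO4}) gives $\ol{T_{H_{\Id_F}}}=0$;
\item hence the identity of $\End_{\Aut_R(M)}(\ol{KF(M)})$ vanishes, and $\ol{KF(M)}=0$.
\end{itemize}

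You instead use the minimality in Theorem A directly. Your Steps 1--2 show that if some $V$ morphs to $F$, then some $x\in F(M)$ lies in no $G(M)$ with $G\subsetneq F$. In the paper's terms, $x\in X_F(M)$ and $F$ is $M$-irredundant. Equivalently: for $M$-redundant $F$, every orbit in $F(M)$ sits inside some $H(M)$ with $H\subsetneq F$, so every constituent of $KF(M)$ already occurs in some $KH(M)$ with $H\prec F$. Your Step 3 is then exactly the first half of the paper's proof of Proposition \ref{prop:redundancycriterion}.

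Two small points:
\begin{itemize}
\item You should state explicitly that $G_x\subsetneq F$ gives the strict relation $G_x\prec F$. This holds because a proper subfunctor has strictly smaller finite value at some object, so it is not isomorphic to $F$.
\item The compatibility you were worried about is automatic: $G_x(M)$ is an $\Aut_R(M)$-stable subset of $F(M)$, hence a union of orbits.
\end{itemize}

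What your route buys is economy. It needs only Theorem A and the standard projective-cover argument, and it bypasses both the $L_H$/$T_H$ calculus and the kernel lemma for $\Theta_F(M)$. What the paper's route buys is coherence with what follows. There the lemma is the simplest instance of the relations $\ol{L_H}=0$ for $M$-redundant $H$ containing an $F$-tight subfunctor. Section \ref{sec:relations} analyses these relations in general to obtain the main theorem.
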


If $G\subseteq F$ is a subfunctor, we write $Z_G$ for the unique object in $\C$ such that $\Hom_R(Z_G,-)$ is the projective cover of $G$. 
We write $P_G\subseteq \Autc(F)$ for the subgroup $\{\phi\in \Autc(F)|\Im(\phi - \Id_F)\subseteq G\}$. 

In Section \ref{sec:relations} we prove our main result:
\begin{theorem}
Let $M$, $\C$, and $F$ be as above. Assume that $F$ has a minimal presentation $(X\stackrel{f}{\to} Y)$ and that $M=X\oplus Z$. 
Let $V$ be an irreducible representation of $\Autc(F)$. Then $V$ is the functor morphing of an irreducible representation of $\Aut_R(M)$ if and only if the following condition holds: for every semisimple subfunctor $G\subseteq F$ such that $Z_G$ is not a direct summand of $Z$, it holds that $\Hom_{P_G}(\one,V)=0$. 
\end{theorem}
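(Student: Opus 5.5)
We reduce the question to a statement about the $\Aut(F)$-representation $KF(M)$ and its subfunctor decomposition. By the description of functor morphing recalled in the introduction (from \cite{CMO4}), an irreducible $V$ of $\Autc(F)$ is the functor morphing of some irreducible representation of $\Aut_R(M)$ exactly when $\Hom_{\Autc(F)}(V,\wt{W})\neq 0$ for some irreducible $W$ with $\wt W=\Hom_{\Aut_R(M)}(KF(M),W)$ and $F$ minimal for $W$. Equivalently, $V$ must occur in the part of $KF(M)$ not coming from proper subquotients. The first step is therefore to write
$$KF(M)=K\Hom_{\C_{+1}}(\Hom_R(M,-),F)\supseteq \sum_{G\subsetneq F}K\big(\text{maps with image in }G\big)\ \cup\ \text{maps factoring through a quotient},$$
and identify the ``new'' part $KF(M)^{\mathrm{new}}$ as the set of elements killed by all maps to proper quotients and orthogonal to all proper subfunctors. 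The claim is that $V$ is in the image iff $V$ occurs in $KF(M)^{\mathrm{new}}$ as an $\Autc(F)\times\Aut_R(M)$ bimodule component. Since $M=X\oplus Z$ and $X$ surjects onto $F$ via the minimal presentation, surjections $\Hom(M,-)\to F$ exist, so quotient conditions are automatically handled by restricting to epimorphisms (Lemma \ref{lem:XM}).

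The second step is Möbius inversion over subfunctors. Writing $KF(M)=\bigoplus_{G\subseteq F}K\{\text{maps }\Hom(M,-)\to F\text{ with image exactly }G\}$, each orbit summand is induced from the stabilizer. The elements with image exactly $F$ form one free $\Aut_R(M)$-orbit class indexed by $\Autc(F)$-cosets. I would then show, via the standard argument used for Harish-Chandra-type inclusion–exclusion, that the obstruction to $V$ appearing is the existence of a surjection $\Hom(M,-)\to F$ whose ``defect'' lies in a semisimple subfunctor $G$. Concretely, pick $G\subseteq \mathrm{soc}(F)$. Two epimorphisms $\Hom(M,-)\to F$ differing by a map into $G$ are related by elements of $P_G$ exactly when the difference lifts, and the difference can be absorbed by an automorphism of $M$ iff the projective cover $\Hom(Z_G,-)$ of $G$ factors through the $Z$-summand. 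That happens precisely when $Z_G$ is a summand of $Z$. Here semisimplicity is needed so that maps into $G$ are determined on the top, hence by projective covers.

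The third step identifies the contribution. For each semisimple $G$ with $Z_G$ not a summand of $Z$, the $P_G$-orbit sums give vectors in $KF(M)$ whose span is the $P_G$-invariants of the epimorphism part, and these lie in the image of the sum of maps from smaller functors. So they must be projected out, which forces $\Hom_{P_G}(\one,V)=0$ for $V$ to survive. Conversely, when all such invariants vanish, an inclusion–exclusion over the lattice of semisimple subfunctors shows that the $V$-isotypic part of the epimorphism module injects into $KF(M)^{\mathrm{new}}$, which gives existence. For $G$ with $Z_G\mid Z$, the $P_G$-averaging is already realized by $\Aut_R(M)$, so it imposes no condition.

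The main obstacle will be the converse direction. I need to show that the non-new part of $KF(M)$, restricted to epimorphisms, is spanned exactly by the $P_G$-averages for the bad semisimple $G$, and in particular that non-semisimple subfunctors contribute nothing beyond their socles. I would first try reducing to the socle via the fact that any $P_H$ contains $P_{G}$ for suitable simple $G\subseteq H$. Then I would compute $\Hom_{P_G}$ by a Mackey/orbit count comparing both sides' dimensions.
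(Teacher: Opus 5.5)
\textbf{There is a genuine gap: neither direction is actually argued.} Your framing is reasonable. On the set $X_F(M)$ of epimorphisms $\Hom_R(M,-)\to F$, $\Aut_R(M)$ acts transitively and $\Autc(F)$ acts freely (it is not a ``free $\Aut_R(M)$-orbit''), and the question is which $\Autc(F)$-isotypic parts of $KX_F(M)$ survive in $\ol{KF(M)}$. But your central claim, that what dies is spanned by translates of $P_G$-averages for semisimple $G$ with $Z_G$ not a direct summand of $Z$, is a restatement of the theorem. You give no mechanism for either inclusion.

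\textbf{Necessity.} The missing argument runs as follows.
\begin{itemize}
\item $H=\Delta(F)\oplus(0\oplus G)\subseteq F\oplus F$ has projective cover $\Hom_R(X\oplus Z_G,-)$. By Proposition~\ref{prop:redundancycriterion} and Krull--Schmidt, $H$ is $M$-redundant exactly when $Z_G$ is not a direct summand of $Z$, and then $L_H=0$.
\item Expand $L_H$ by inclusion--exclusion over the maximal subfunctors of $H$. The non-$F$-tight terms vanish in $\End_{\Aut_R(M)}(\ol{KF(M)})$. On epimorphisms, the surviving non-tight terms are fibre sums factoring through $K(F/G')(M)$ for nonzero $G'\subseteq G$. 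So, contrary to your first step, the quotient conditions are not ``automatically handled'' by restricting to epimorphisms; they are exactly where the obstruction comes from.
\item The tight terms are precisely the $T_{H_\beta}$ with $\beta\in P_G$, all with one common coefficient. That coefficient is the M\"obius invariant of the subobject lattice of $G$, and the relation is empty unless it is nonzero. The paper shows it equals $\prod_j(-1)^{b_j}q_j^{b_j(b_j-1)/2}$ for $G\cong\bigoplus_j S_j^{b_j}$, $q_j=|\End(S_j)|$, via the $q$-binomial theorem.
\end{itemize}
Your heuristic of ``absorbing the difference by an automorphism of $M$'' misplaces the role of $Z$. For $\pi=p\circ\phi^*$ with $\phi\colon X\to M$ the inclusion, every $\beta\pi$ with $\beta\in\Autc(F)$ equals $\pi\circ g^*$ for $g=\tilde\beta\oplus\Id_Z$, where $\tilde\beta$ is a lift of $\beta$ to $X$, whatever $Z$ is. What $Z$ actually controls is whether $(F\oplus G)(M)$ contains an element generating $F\oplus G$.

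\textbf{Sufficiency.} You need completeness of the relations, which you flag as the main obstacle but do not resolve. The paper gets it from three inputs absent from your plan:
\begin{itemize}
\item surjectivity of $\Phi_F(M)$ (Theorem B of \cite{CMO4}), so the image of functor morphing consists exactly of the irreducible modules of $K\Autc(F)/\ker\Phi$;
\item the fact that the $L_H$ with $H$ $M$-irredundant form a basis of $\Hom_{\Aut_R(M)}(\one,K(F\oplus F)(M))$, so the only relations among the $T_H$ are $L_H=0$ for $M$-redundant $H$;
\item the description of $\ker\Theta_F(M)$ as the span of the non-tight $T_H$.
\end{itemize}
Add the observation that a tight functor appears in the maximal-subfunctor expansion of $L_H$ only if $H=H_\alpha\oplus(0\oplus G)$ with $G$ semisimple. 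This identifies $\ker\Phi$ with the two-sided ideal generated by the $e_{P_G}$. An irreducible $V$ is annihilated by that ideal if and only if $\Hom_{P_G}(\one,V)=0$ for all such $G$.

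That last observation is also the real reason non-semisimple subfunctors impose no condition, and your proposed reduction via $P_G\subseteq P_H$ cannot replace it. The containment only shows that a condition from $H$ would follow from one coming from a bad $G\subseteq H$; it says nothing when no such $G$ exists. For example, take $B_2(\F_q)$ with $F=F_{1,2}$, $X=M_2$, $Z=M_1$:
\begin{itemize}
\item the subfunctor $H=F$ has $Z_H=M_2$, which is not a direct summand of $Z$;
\item its only nonzero proper subfunctor $F_{1,1}$ has $Z_{F_{1,1}}=M_1=Z$, so there is no bad semisimple subfunctor to reduce to;
\item yet the trivial character of $P_F=\Autc(F)\cong\F_q^\times$ is in the image of functor morphing.
\end{itemize}
A ``Mackey/orbit count'' does not substitute for this structural input.
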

The main result has the following corollary, characterizing the situations in which all the irreducible representations of $\Autc(F)$ arise from functor morphing:
\begin{corollary}
Let $M=X\oplus Z$ and let $(X\xrightarrow{f} Y)$ be a minimal resolution for $F$. Then the following conditions are equivalent:
\begin{enumerate}
\item Every irreducible representation of $\Autc(F)$ is the functor morphing of an irreducible representation of $\Aut_R(M)$. 
\item The trivial representation $\one$ of $\Autc(F)$ is the functor morphing of some irreducible representation of $\Aut_R(M)$.
\item If $\Hom(X_1,-)$ is the projective cover of $soc(F)$ then $X_1$ is a direct summand of $Z$. 
\end{enumerate}
\end{corollary}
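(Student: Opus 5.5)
We derive the corollary directly from the main Theorem, proving the cycle $(1)\Rightarrow(2)\Rightarrow(3)\Rightarrow(1)$.

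\emph{The implication $(1)\Rightarrow(2)$.} This is immediate, since $\one$ is an irreducible representation of $\Autc(F)$.

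\emph{The implication $(3)\Rightarrow(1)$.} Let $V$ be any irreducible representation of $\Autc(F)$ and let $G\subseteq F$ be a semisimple subfunctor. Then $G\subseteq soc(F)$. Since $\Autc(F)$ is a functor category of finite type, every semisimple functor is a direct sum of simples $S$, and each $S$ has a projective cover $\Hom(P_S,-)$ with $P_S$ indecomposable. Write $soc(F)=\bigoplus_S S^{m_S}$ and $G=\bigoplus_S S^{n_S}$ with $n_S\le m_S$. Projective covers are additive, so $Z_G=\bigoplus_S P_S^{n_S}$ and $X_1=\bigoplus_S P_S^{m_S}$. Hence $Z_G$ is a direct summand of $X_1$. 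By (3), $X_1$ is a direct summand of $Z$, so $Z_G$ is a direct summand of $Z$. Therefore no semisimple $G$ satisfies the hypothesis in the Theorem's condition, and the condition holds vacuously. The Theorem then gives that $V$ arises from functor morphing. One point needs care here. Direct summands in $\C$ are governed by Krull--Schmidt, because $\C$ is a full additive subcategory of finite modules with local endomorphism rings of the indecomposables. So "direct summand" can be read as "multiplicities of indecomposables are at most those in the larger object", and this is what the argument uses.

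\emph{The implication $(2)\Rightarrow(3)$.} Apply the Theorem to $V=\one$ with $G=soc(F)$, which is semisimple. For every subgroup $P_G$ we have $\Hom_{P_G}(\one,\one)=\mathbb{C}\neq 0$. So the Theorem's condition, applied to $\one$, forces every semisimple $G$ to have $Z_G$ a direct summand of $Z$. In particular $Z_{soc(F)}=X_1$ is a direct summand of $Z$, which is (3).

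The only mildly delicate points are these: the notation $Z_G$ must match the projective cover of $soc(F)$, which it does by definition; and projective covers must commute with finite direct sums, which holds since a direct sum of projective covers is a projective cover (its kernel is superfluous). I do not expect a serious obstacle, since the corollary is a formal consequence of the Theorem together with Krull--Schmidt in $\C$.
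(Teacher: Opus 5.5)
Your proof is correct and follows the paper's argument: the cycle $(1)\Rightarrow(2)\Rightarrow(3)\Rightarrow(1)$, with $(2)\Rightarrow(3)$ obtained by applying the main Theorem to $\one$ (where $\Hom_{P_G}(\one,\one)\neq 0$ for every $G$), and $(3)\Rightarrow(1)$ by observing that the Theorem's condition becomes vacuous. Your extra step, that any semisimple $G\subseteq F$ lies in $soc(F)$ so $Z_G$ is a direct summand of $X_1$ and hence of $Z$, simply spells out what the paper means by ``the conditions of the theorem hold trivially''; note also a small slip: where you call $\Autc(F)$ a functor category of finite type you mean $\C_{+1}$.
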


Finally, in Section \ref{sec:examples} we give examples. We first consider the family of groups $\GL_n(\F_q)$. By \cite{Zelevinsky} we know that these groups give rise to a positive self-adjoint Hopf algebra $\H$. We explain the connection between our main result and the structure of $\H$.  
The second family of examples that we give is of the family of groups $B_n(\F_q)$ of upper triangular matrices over $\F_q$. We show how our result translates to a combinatorial criterion, and we show how this gives a new enumeration of the irreducible representations of $B_n(\F_q)$ when $n\leq 4$. 
A long standing conjecture of Higman from 1960 \cite{Higman1} states that for every $n$ the number of conjugacy classes in the unipotent radical $U_n(\F_q)$ of $B_n(\F_q)$ is a polynomial in $q$ \cite{VA4,So1}. Since the number of conjugacy classes is the same as the number of irreducible representations, and since $U_n(\F_q)$ is a normal subgroup of $B_n(\F_q)$ with an abelian quotient, this suggests that there are deep connections between enumeration of irreducible representations of $B_n(\F_q)$ and Higman's conjecture. It is our hope that, following this paper, the technique of functor morphing will be further used to study this conjecture. 

\section{Preliminaries}
\subsection{Functor morphing}
Let $R$ be a finite ring and let $M$ be an $R$-module. Functor morphing is a tool that was introduced in \cite{CMO4} to study the representation theory of $\Aut_R(M)$. We recall here the details. Write $M=M_1^{a_1}\oplus\cdots\oplus M_n^{a_n}$ for the decomposition of $M$ into non-isomorphic indecomposable modules. Write $\C = \langle M_1,\ldots M_n\rangle\subseteq R-\modd$ for the full subcategory of finite $R$-modules of the form $\bigoplus_{i=1}^n M_i^{b_i}$ and write $\C_{+1} = Fun(\C,Ab)$ for the category of additive functors from $\C$ to the category of finite abelian groups. The isomorphism classes of the objects in $\C_{+1}$ are partially ordered, where $F\preceq G$ if and only if $F$ is isomorphic to a subquotient of $G$. For every $F\in\C_{+1}$ it holds that $F(M)$ is an $\Autc(F)\times\Aut_R(M)$-set in a natural way. As a result, the linearlization $KF(M) = span\{u_v\}_{v\in F(M)}$ is an $\Autc(F)\times\Aut_R(M)$-representation. 
\begin{theorem}[Theorem A, \cite{CMO4}] Let $V$ be an irreducible representation of $Aut_R(M)$. There is a
unique (up to isomorphism) functor $F\in \C_{+1}$ such that $\Hom_{Aut_R(M)}(KF(M),V)\neq 0$ and  $\Hom_{Aut_R(M)}(KG(M),V)= 0$ for every $G\prec F$. 
\end{theorem}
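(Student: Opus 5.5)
Theorem A is quoted from \cite{CMO4}, so I will recall the route by which it is proved there rather than build a new argument. The plan has three steps: show that some functor with nonzero Hom exists, show that any two minimal such functors coincide, and conclude uniqueness.

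\emph{Existence.} Every $F\in\C_{+1}$ gives the $\Aut_R(M)$-representation $KF(M)$, because $\Aut_R(M)$ acts on $F(M)$ through functoriality. Take $F=\Hom_R(M,-)$. Then $F(M)=\End_R(M)$ contains $\Aut_R(M)$ as a free orbit, so $KF(M)$ contains the regular representation. Hence $\Hom_{\Aut_R(M)}(KF(M),V)\neq 0$ for every irreducible $V$. Since subquotients of a finite functor form a finite poset under $\preceq$, there is a functor $F$ minimal among those with nonzero Hom.

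\emph{Uniqueness.} This step is the core, and I will argue it through the invariant-theory description of $\Hom_{\Aut_R(M)}(KF(M),KG(M))$ from \cite{meir1}. By Burnside, this Hom space has a basis indexed by the $\Aut_R(M)$-orbits on $F(M)\times G(M)$. An orbit of a pair $(x,y)$ is determined by the subfunctor $H$ of $F\oplus G$ that the pair generates, or rather by the corresponding ``correspondence''. The key lemma I would prove is a filtration statement. Let $F$ be minimal for $V$ and let $F'$ be another functor. Any $V$-isotypic intertwiner $KF(M)\to KF'(M)$ must factor through orbits whose generated subfunctor $H\subseteq F\oplus F'$ projects isomorphically onto $F$. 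The reason is that if the projection onto $F$ has a kernel or is not surjective, the corresponding operator factors through $KQ(M)$ for some proper subquotient $Q\prec F$. Such a $Q$ contributes nothing to the $V$-isotypic part, by minimality.

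Applying this symmetrically with $F'$ also minimal, I obtain $H\subseteq F\oplus F'$ projecting isomorphically onto both factors, that is, the graph of an isomorphism $F\cong F'$. The remaining input is that the $V$-isotypic component of $KF(M)$ actually maps nontrivially to that of $KF'(M)$. I would get this by noting that both $KF(M)$ and $KF'(M)$ sit inside $K\Hom(M,-)(M)$ up to the relevant quotient maps, so their $V$-components interact. Concretely, I would use that $KF(M)$ is a quotient of $K\Hom_R(P,-)(M)$ for a projective presentation, and project the $V$-part.

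\emph{Main obstacle.} The hardest step is the factorization claim: an orbit operator whose correspondence is not the graph of an isomorphism lands in the span of representations coming from strictly smaller functors. I would try to show this directly by writing the orbit operator as a composite $KF(M)\to KQ(M)\to KF'(M)$, using the image of $H$ in $F$ and its kernel. Linearizing the maps of $\Aut_R(M)$-sets $F(M)\to (F/\ker)(M)$ and the fibres over subfunctors should give this, with care about $K$-linear versus set-level surjectivity. This is the mechanism behind the symmetric monoidal category constructed in \cite{meir1}, which I would invoke if the direct computation becomes unwieldy.
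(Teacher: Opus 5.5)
Your architecture is the right one: existence comes from the free orbit $\Aut_R(M)\subseteq\End_R(M)$, and uniqueness comes from showing that intertwiners $KF(M)\to KF'(M)$ are built from maps factoring through $KQ(M)$ for common subquotients $Q$. The paper only quotes the result from \cite{CMO4}, but it relies on the same mechanism when it says the kernel of $\Theta_F(M)$ is spanned by the non-tight $T_H$.

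\textbf{The gap.} Your key lemma is false as formulated, so the step you flag as the main obstacle cannot be carried out. Orbit sums do not factor through smaller functors. Take $R=\F_q$, $M=\F_q^2$, $F=F'=\Hom_{\F_q}(\F_q,-)$ and $V=\mathrm{St}$. Then $F$ is minimal for $V$, because its only proper subquotient is $0$ and $K0(M)=\one$. The orbit of linearly independent pairs generates $H=F\oplus F$, and both projections of $H$ have kernels. Yet its orbit operator ($u_x\mapsto\sum_{y\notin\F_q x}u_y$ for $x\neq 0$, $u_0\mapsto 0$) acts by $-(q-1)\neq 0$ on the copy of $\mathrm{St}$ in $KF(M)$, namely the line-constant functions on $\F_q^2\setminus 0$ with zero sum. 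Correspondingly, $e_{\mathrm{St}}$ has coefficient $-1/(q^2-1)$ on this orbit in the Burnside basis. This is exactly the phenomenon computed in Section \ref{sec:relations}: $\ol{L_H}=a_{\Id_F}\sum_{\beta\in P_G}\ol{T_{H_\beta}}$ with $a_{\Id_F}\neq 0$ (here $G=F$ and $a_{\Id_F}=-1$).

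Your criterion is also wrong for the operators that do factor. A kernel of $p_1\colon H\to F$ does not by itself force factorization through a proper subquotient of $F$. For example, with $F'=F\oplus S$ and $H=\{(x,(x,s))\}$, the operator $u_x\mapsto\sum_s u_{(x,s)}$ is injective.

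\textbf{The fix.} Use the spanning set $\{T_H\}$ of full sums over $H(M)$ (Proposition 2.23 of \cite{CMO4}). You can also derive it from your orbit basis plus Lemma \ref{lem:LT}: each $X_H(M)$ is a single orbit, since two surjections $\Hom_R(M,-)\to H$ differ by an automorphism of $\Hom_R(M,-)$.

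Put $F_1=p_1(H)$, $F_1'=p_2(H)$, $F_0\oplus 0=H\cap(F\oplus 0)$ and $0\oplus F_0'=H\cap(0\oplus F')$. By Goursat, $H\subseteq F_1\oplus F_1'$ is the inverse image of the graph of an isomorphism $\theta\colon F_1/F_0\to F_1'/F_0'$. Since everything is computed pointwise, $T_H$ is the following composite:
\begin{enumerate}
\item restriction $KF(M)\to KF_1(M)$;
\item $K(F_1\to F_1/F_0)$;
\item $K\theta$;
\item summation over the fibres of $F_1'(M)\to(F_1'/F_0')(M)$;
\item inclusion into $KF'(M)$.
\end{enumerate}
So $T_H$ factors through $KQ(M)$ with $Q=F_1/F_0\cong F_1'/F_0'$, a common subquotient of $F$ and $F'$. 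Moreover $Q\cong F$ if and only if $p_1$ is onto and $p_2$ is injective on $H$; note that the injectivity condition is on $p_2$, not $p_1$.

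Now pick a nonzero composite $KF(M)\twoheadrightarrow V\hookrightarrow KF'(M)$. It exists by semisimplicity, so your detour through projective presentations is unnecessary. Write it as $\sum_H c_HT_H$ and apply the central idempotent $e_V$. This idempotent commutes with every intertwiner and acts as zero on $KQ(M)$ unless $V$ occurs there. Hence some $e_VT_H$ is nonzero. Minimality of $F$ and of $F'$ then forces $Q\cong F$ and $Q\cong F'$, so $F\cong F'$.
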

This theorem shows that to every irreducible representation $V$ of $\Aut_R(M)$ we can attach a functor $F\in \C_{+1}$. 
In fact, more is true. Write $\ol{KF(M)}$ for the largest subobject of $KF(M)$ that satisfies $$\Hom_{\Aut_R(M)}(\ol{KF(M)},KG(M))=0$$ for every $G\prec F$. 
The $\Autc(F)$-action on $KF(M)$ restricts to an action on $\ol{KF(M)}$ and we have an algebra morphism 
$$\Phi_F(M):K\Autc(F)\to\End_{\Aut_R(M)}(\ol{KF(M)}).$$
\begin{theorem}[Theorem B, \cite{CMO4}] The map $\Phi_F(M)$ is surjective. If the numbers $a_i$ that appear in the decomposition $M_i = \bigoplus_i M_i^{a_i}$ are big enough, then $\Phi_F(M)$ is an isomorphism.
\end{theorem}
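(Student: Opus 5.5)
The plan is to work with the permutation structure of $KF(M)$ and to use Yoneda's lemma, $F(M)\cong\Hom_{\C_{+1}}(\Hom_R(M,-),F)$. Under this identification an element $v\in F(M)$ is a natural transformation $\hat v:\Hom_R(M,-)\to F$ with image a subfunctor $F_v\subseteq F$. Since $KF(M)$ is a permutation representation of $\Aut_R(M)$, the algebra $\End_{\Aut_R(M)}(KF(M))$ has a basis of orbit operators $T_{\mathcal O}$, where $\mathcal O$ runs over the $\Aut_R(M)$-orbits on $F(M)\times F(M)=(F\oplus F)(M)$.

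\textbf{Step 1 (rewriting the orbit operators).} A pair $(v,w)$ corresponds to a map $\Hom_R(M,-)\to F\oplus F$ with image $H\subseteq F\oplus F$. The two projections restrict to maps $p_1,p_2:H\to F$. Let $p^*$ denote pullback (sum over fibres) and $p_*$ pushforward. Each $T_{\mathcal O}$ should then be a linear combination of operators of the form
$$(p_2)_*\circ(p_1)^*:KF(M)\to KH(M)\to KF(M),$$
restricted to the elements of $H(M)$ that are "generating", i.e.\ surjective as maps $\Hom_R(M,-)\to H$. The $\Aut_R(M)$-orbit of such a generating element is essentially controlled by $H$, by a projective-cover / Krull--Schmidt comparison of two epimorphisms from $\Hom_R(M,-)$. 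It therefore suffices to show that the span of these operators, compressed to $\ol{KF(M)}$, lies in the image of $\Phi_F(M)$.

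\textbf{Step 2 (degenerate $H$ die on $\ol{KF(M)}$).} I will show that if $p_1$ or $p_2$ fails to be an isomorphism, then the corresponding operator factors through some $KG(M)$ with $G\prec F$. By the definition of $\ol{KF(M)}$, its compression to $\ol{KF(M)}$ is then zero.
\begin{itemize}
\item If $p_2$ is not surjective, the operator factors through $K(\Im p_2)(M)$, and $\Im p_2\subsetneq F$.
\item If $p_2(\ker p_1)=N\neq 0$, then summing over a fibre of $p_1$ makes the output invariant under translation by $N(M)$. Hence the operator factors through the pushforward $KF(M)\to K(F/N)(M)$, and $F/N\prec F$.
\item If $p_2(\ker p_1)=0$ and $p_2$ is surjective but $p_1$ is not an isomorphism, use the adjoint (transpose) operator, which is again an orbit operator with the roles of $p_1$ and $p_2$ swapped. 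The same argument then applies on the other side.
\end{itemize}
The surviving $H$ are exactly graphs of automorphisms $\phi\in\Autc(F)$. On $\ol{KF(M)}$ the corresponding operator is the action of $\phi$, i.e.\ $\Phi_F(M)(\phi)$. This gives surjectivity.

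\textbf{Step 3 (injectivity for large $a_i$).} When every $a_i$ is at least the multiplicity of $\Hom_R(M_i,-)$ in the projective cover of $F$, choose $v\in F(M)$ with $\hat v$ surjective. By Yoneda, $\phi\cdot v=v$ forces $\phi=\Id_F$, since $\phi\circ\hat v=\hat v$ with $\hat v$ epi. Thus the vectors $\phi\cdot u_v$ are distinct basis vectors of $KF(M)$. I then need their projections to $\ol{KF(M)}$ to remain linearly independent. For this I would argue that the span of the $u_w$ with $\hat w$ surjective maps isomorphically onto $\ol{KF(M)}$ modulo the part coming from the $KG(M)$ with $G\prec F$. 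The idea is to filter $KF(M)$ by the subfunctor $F_w$ and use the fact that non-surjective $w$ come from $KF_w(M)$.

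\textbf{Main obstacle.} I expect the hardest steps to be the orbit description in Step 1, namely that generating elements with the same image $H$ are $\Aut_R(M)$-conjugate (this needs $M$ to dominate projective covers, and must be bypassed in the general surjectivity statement by working with orbit operators directly), together with the independence claim in Step 3. For the former I would first try lifting an isomorphism between kernels via projectivity of $\Hom_R(M,-)$ and the splitting of $M$ into projective cover plus complement.
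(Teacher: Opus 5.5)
You have a real gap in Step~3; Steps~1--2 are sound.

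\textbf{Surjectivity (Steps 1--2).} This part is correct and uses the same mechanism the paper relies on. The paper does not prove Theorem B; it quotes it from \cite{CMO4}, but it uses exactly these facts. The operators $T_H$, $H\subseteq F\oplus F$, span $\End_{\Aut_R(M)}(KF(M))$. The non-tight ones vanish on $\ol{KF(M)}$ because they factor through some $KG(M)$ with $G\prec F$. Finally, $T_{H_\phi}$ is the action of $\phi$. Three small corrections:
\begin{enumerate}
\item Run Step 2 with the full fibre sums $T_H=(p_2)_*(p_1)^*$, not with sums over generating elements. The generating elements in a fibre of $p_1$ are not stable under translation by $(\ker p_1)(M)$. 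Both families span the same space by Corollary~\ref{cor:TL} and Lemma~\ref{lem:LT}.
\item Your orbit worry is unnecessary. $H(M)$ contains a generating element only if $\Hom_R(M,-)$ surjects onto $H$, so the projective cover is automatically a summand and your splitting argument applies.
\item Your third bullet is vacuous. Since $\ker p_1\cap\ker p_2=0$, the condition $p_2(\ker p_1)=0$ makes $p_1$ injective, and then surjectivity of $p_2$ forces both maps to be isomorphisms by length.
\end{enumerate}

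\textbf{The gap: injectivity (Step 3).} Your hypothesis is that $X$ is a direct summand of $M$. That is only the non-vanishing condition of Lemma~\ref{lem:XM}. Under it, $\Phi_F(M)$ is in general not injective, so the independence you set out to prove is false.

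A minimal counterexample: take $R=M=\F_3$ and $F=\Hom_R(\F_3,-)$. Then $\Aut_R(M)=\Autc(F)=\F_3^{\times}$, and the only $G\prec F$ is $0$. So $\ol{KF(M)}$ is the line spanned by $u_1-u_2$, on which $-1\in\Autc(F)$ acts by $-1$. Hence $1+[-1]\in\ker\Phi_F(M)$, and the projections of $u_1$ and $(-1)\cdot u_1=u_2$ are proportional. Similarly, by Section~\ref{sec:examples}, for $F=\Hom(\F_q^m,-)$ and $M=\F_q^n$ with $m\le n<2m$, the trivial representation of $\GL_m(\F_q)$ is missed.

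These dependencies are exactly what Sections~\ref{sec:lin.rel}--\ref{sec:relations} compute. Write $M=X\oplus Z$ and let $G\subseteq F$ be semisimple with $Z_G$ not a summand of $Z$. Then $H=H_{\Id_F}\oplus(0\oplus G)$ is $M$-redundant. Expanding $\ol{L_H}=0$ by inclusion--exclusion, with $a_{\Id_F}\neq 0$, gives $\sum_{\beta\in P_G}\Phi_F(M)(\beta)=0$. So $\sum_{\beta\in P_G}\beta\cdot u_v$ projects to $0$ for your generating $v$.

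Your filtration idea only shows that the generating vectors span $KF(M)$ modulo the complement of $\ol{KF(M)}$, since each non-generating $u_w$ lies in the subrepresentation $KF_w(M)$. It says nothing about which combinations of the $\phi\cdot u_v$ fall into that complement.

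To repair Step 3 you need two things:
\begin{enumerate}
\item The correct threshold: $M$ must contain $X\oplus X_1$, where $\Hom_R(X_1,-)$ is the projective cover of $soc(F)$. By Theorem~\ref{thm:main} and the corollary after it, this is exactly when $\Phi_F(M)$ is injective.
\item A genuine lower bound on $\End_{\Aut_R(M)}(\ol{KF(M)})$. For example, use that the kernel of restriction to $\ol{KF(M)}$ is \emph{exactly} the span of the non-tight $T_H$; this is the lemma the paper draws from Proposition~3.3 of \cite{CMO4}. Then every relation among the $\ol{T_{H_\alpha}}$ comes from an $M$-redundant $H\supseteq H_\alpha$ with $H/H_\alpha$ semisimple, and none exist above that threshold.
\end{enumerate}
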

This theorem enables us to attach to $V$ an irreducible representation of $\Autc(F)$. Indeed, we can decompose $\ol{KF(M)}$ as an $\Aut_R(M)$-representation and write  $\ol{KF(M)} = \bigoplus_j U_j\ot V_j$, where $U_j$ are vector spaces and $V_j$ are non-isomorphic $\Aut_R(M)$-representations. The action of $\Autc(F)$ commutes with the action of $\Aut_R(M)$, and is thus given by an action on the vector spaces $U_j$. The fact that $\Phi_F(M)$ is surjective implies that $U_j$ is an irreducible $\Autc(F)$-representation. 
\begin{definition}  We call $(F,U_j^*)$ the functor morphing of $(M,V_j)$, and we write $(M,V_j)\leadsto (F,U_j^*)$. 
\end{definition}
\begin{remark}
\begin{enumerate}
\item By the first theorem we see that for every irreducible representation $V$ of $\Aut_R(M)$ there is a unique (up to isomorphism) functor $F$ such that $V$ appears in $\ol{KF(M)}$. So we can talk freely about \textit{the} functor morphing of $V$. 
\item The reason we take $U_j^*$ instead of $U_j$ in the definition is that we want to make sure that in the special case $F=\Hom_R(M,-)$ it holds that representations functor morph to themselves under the identification $\Autc(F)\cong \Aut_R(M)$. 
\item In the paper \cite{CMO4} the ring $R$ was an $\Ow_l:= \Ow/(\pi^l)$-algebra, where $\Ow$ is some discrete valuation ring with uniformizer $\pi$, and $\C_{+1}$ was the category of $\Ow_l$-linear functors from $\C$ to $\Ow_l-\Mod$. It can easily be seen that this category is equivalent to the category of additive functors from $\C$ to $Ab$. In one direction we can just compose with the forgetful functor $\Ow_l-\Mod\to Ab$, and in the other direction we can use the fact that if $F:\C\to Ab$ is an additive functor, then $F(X)$ has a canonical $\Ow_l$-module structure for every $X\in\C$. In this paper we will just use the definition $\C_{+1}= Fun(\C,Ab)$ that does not require to refer to a discrete valuation ring. 
\end{enumerate}
\end{remark}

\subsection{Hom-spaces}
We begin with the following definition.
\begin{definition}
Let $F,G\in \C_{+1}$ and let $H\subseteq F\oplus G \in\C_{+1}$. We define $$T_H\in \Hom_{\Aut_R(M)}(KF(M),KG(M))$$ $$u_v\mapsto \sum_{\substack{ v'\in G(M) \\ (v,v')\in H(M)}}u_{v'}.$$
\end{definition}
It is easy to see that $T_H$ is indeed a morphism of $\Aut_R(M)$-modules. 
The following is Proposition 2.23 in \cite{CMO4}
\begin{proposition}
The set $\{T_H\}_{H\subseteq F\oplus G}$ spans $\Hom_{\Aut_R(M)}(KF(M),KG(M))$.
\end{proposition}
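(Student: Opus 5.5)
The plan is to compare the spanning set $\{T_H\}$ with the standard basis of the Hom-space. For any finite group $\Gamma$ acting on finite sets $A,B$, the space $\Hom_\Gamma(KA,KB)$ has a basis given by the $\Gamma$-orbits $O\subseteq A\times B$. The orbit $O$ corresponds to the map $u_a\mapsto \sum_{(a,b)\in O}u_b$. Take $\Gamma=\Aut_R(M)$ and $A\times B = F(M)\times G(M)=(F\oplus G)(M)$. It then suffices to show that the operator attached to each $\Aut_R(M)$-orbit on $(F\oplus G)(M)$ is a linear combination of the $T_H$. Note that $T_H$ is exactly the operator attached to the invariant subset $H(M)\subseteq (F\oplus G)(M)$, so $T_H=\sum_{O\subseteq H(M)} T_O$.

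For $w\in (F\oplus G)(M)$, let $H_w\subseteq F\oplus G$ be the subfunctor generated by $w$. By Yoneda this is the image of the map $\Hom_R(M,-)\to F\oplus G$ sending $\Id_M$ to $w$, so $H_w(M)=\{\phi_*(w):\phi\in\End_R(M)\}$. Clearly $w\in H(M)$ if and only if $H_w\subseteq H$. Moreover $H_{\phi_* w}=H_w$ for every $\phi\in\Aut_R(M)$.

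The key step, and the main obstacle, is the converse: if $H_u=H_w$ then $u$ and $w$ lie in the same $\Aut_R(M)$-orbit. I would argue as follows. The module $N=H_w(M)$ is a cyclic left $E=\End_R(M)$-module generated by $w$, so $N\cong E/I$ with $I=\mathrm{ann}(w)$. An element $u$ with $H_u=H_w$ is also a generator of $N$, and since subfunctors generated by elements of $(F\oplus G)(M)$ are determined by their values on $M$, such $u$ are exactly the generators of $N$. The ring $E$ is finite, hence semiperfect. I would then invoke, or reprove, the standard fact that for a semiperfect (in particular semilocal) ring, any generator of the cyclic module $E/I$ is the image of a unit of $E$. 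One route is to reduce modulo the Jacobson radical $J$, where $E/J$ is semisimple and generators of $E/(I+J)$ lift to units by a Krull–Schmidt cancellation argument. Nakayama's lemma then lifts the unit back to $E$. This gives $\phi\in E^\times=\Aut_R(M)$ with $u=\phi_* w$.

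Granting that step, the orbits are indexed by the subfunctors $H$ that are cyclic (generated by one element over $M$): $O_H=\{w: H_w=H\}$. For every subfunctor $H$ we have $T_H=\sum_{H'\subseteq H} T_{O_{H'}}$, where the sum runs over cyclic $H'$. This is a unitriangular relation with respect to inclusion on the finite poset of subfunctors. Möbius inversion therefore gives
\[ T_{O_H}=\sum_{H'\subseteq H}\mu(H',H)\,T_{H'}, \]
so every orbit operator lies in the span of the $T_H$, which proves the proposition.
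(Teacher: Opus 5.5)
Your argument is correct and complete. The paper gives no proof of this statement at all; it simply quotes it as Proposition~2.23 of \cite{CMO4}. Your proposal is therefore a self-contained replacement, and it fits closely with the machinery of Section~\ref{sec:lin.rel}:
\begin{itemize}
\item your orbit $O_H=\{w : H_w=H\}$ is exactly the paper's $X_H(M)=H(M)\setminus\bigcup_{H'\subsetneq H}H'(M)$, so your orbit operator $T_{O_H}$ is the paper's $L_H$ (transported along the identification of Equation~\ref{eq:hom.isos});
\item your relation $T_H=\sum_{H'\subseteq H}T_{O_{H'}}$ is Corollary~\ref{cor:TL};
\item your M\"obius inversion plays the role of the inclusion--exclusion formula of Lemma~\ref{lem:LT}.
\end{itemize}

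The genuinely new ingredient is your orbit lemma: $Eu=Ew$ with $E=\End_R(M)$ forces $u\in\Aut_R(M)\cdot w$. The paper never states this. It reverses the logical direction relative to the paper. In Proposition~\ref{prop:LHred} the paper deduces that the $L_H$ span from the cited spanning of the $T_H$. You instead show directly that the $L_H$ with $H$ $M$-irredundant are precisely the $\Aut_R(M)$-orbit sums, hence a basis by the standard orbit basis of a Hom-space between permutation modules. Here $M$-irredundant is equivalent to $H=H_w$ for some $w\in H(M)$. You then obtain spanning by the $T_H$ by unitriangularity. So your route proves the cited result and the basis statement of Proposition~\ref{prop:LHred} at once.

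There are two small points of precision.
\begin{enumerate}
\item Semiperfect implies semilocal, not conversely. Semilocality, or simply finiteness of $E$, is all you need.
\item The unit-lifting step can be stated in one line. If $u=xw$ and $w=yu$, then $1-yx\in\mathrm{ann}(w)$ and $Ex+E(1-yx)=E$. Since semilocal rings have stable range one, some $\phi=x+t(1-yx)$ is a unit, and $\phi w=u$. In your mod-$J$ version, what you actually use after lifting $\bar\phi$ to a unit $\phi$ is not Nakayama. It is that $\phi w-u\in JN=Ju$, so $\phi w=(1+j)u$ with $1+j\in E^\times$.
\end{enumerate}
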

Since $KF(M)$ is a permutation module it holds that $(KF(M))^*\cong KF(M)$ as $\Aut_R(M)$-representations. 
We thus have a natural isomorphism 
\begin{equation}\label{eq:hom.isos}\Hom_{\Aut_R(M)}(\one, K(F\oplus G)(M))\cong \Hom_{\Aut_R(M)}(\one,KF(M)\ot KG(M))\cong \Hom_{\Aut_R(M)}(KF(M),KG(M)).\end{equation} 
\begin{remark}The trivial one-dimensional representation $\one$ is isomorphic to $K0(M)$, where $0$ is the zero functor from $\C$ to $Ab$. It can easily be seen that under the isomorphism of Equation \ref{eq:hom.isos} the element $T_H\in\Hom_{\Aut_R(M)}(K0(M),K(F\oplus G)(M))$ corresponds to the element $T_H\in \Hom_{\Aut_R(M)}(KF(M),KG(M))$. We will use this in what follows, when we study the linear relations the elements $T_H$ satisfy. 
\end{remark}

The next definition will appear often in our study of functor morphing:
\begin{definition}
Let $F\in \C_{+1}$. We say that $(X\stackrel{f}{\to}Y)$ is a presentation of $F$ if $X,Y\in \C$, and $F$ has a resolution
$$\Hom_R(Y,-)\stackrel{f^*}{\to}\Hom_R(X,-)\to F\to 0.$$ The presentation is called minimal if the resolution is minimal. 
\end{definition}
By Lemma 4.7. in \cite{CMO4} we know that if $F$ has a minimal resolution $(X\stackrel{f}{\to}Y)$ and $\ol{KF(M)}\neq 0$ then $X$ is a direct summand of $M$. We will prove this again in Lemma \ref{lem:XM} below. For more background on homological algebra in the category $\C_{+1}$, see Appendix A in \cite{CMO4}.

\section{linear relations in hom-spaces}\label{sec:lin.rel}
By Equation \ref{eq:hom.isos} we see that it is enough to study the linear relations among the elements $$\{T_H\}_{H\subseteq F} \subseteq \Hom_{\Aut_R(M)}(\one, KF(M))$$ for an arbitrary functor $F\in \C_{+1}$. 
\begin{definition}
Let $M\in \C$ and $F\in \C_{+1}$. We say that $F$ is $M$-redundant if it holds that $$F(M) = \bigcup_{H\subsetneq F} H(M).$$ If $F$ is not $M$-redundant we say that $F$ is $M$-irredundant.  
\end{definition}
\begin{definition}
For every $G\in \C_{+1}$ we write $$X_G(M) = G(M)\backslash \cup_{G'\subsetneq G}G'(M).$$ Thus, $G$ is $M$-redundant if and only if $X_G(M) = \varnothing$. 
\end{definition}
\begin{definition}
For every $H\subseteq F$ we define $L_H = \sum_{v\in X_G(M)}u_v\in \Hom_{\Aut_R(M)}(\one,KF(M))$
\end{definition}
\begin{lemma}\label{lem:XG}
It holds that 
$$H(M) = \bigsqcup_{G\subseteq H} X_G(M).$$
\end{lemma}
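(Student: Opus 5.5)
For each element $v\in H(M)$ we look at the set of subfunctors $G\subseteq H$ with $v\in G(M)$, and show that it has a unique smallest member $G_v$. The natural candidate is the subfunctor generated by $v$. By Yoneda, $v\in H(M)$ corresponds to a natural transformation $\eta_v:\Hom_R(M,-)\to H$, with $\eta_v(\Id_M)=v$. We set $G_v=\Im(\eta_v)$, which is a subfunctor of $H$ because $\C_{+1}$ is abelian and images are computed pointwise. Concretely,
$$G_v(N)=\{H(g)(v)\mid g\in\Hom_R(M,N)\}.$$

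First we check that $G_v$ is the smallest subfunctor of $H$ containing $v$ in its value at $M$. Since $v=H(\Id_M)(v)$, we have $v\in G_v(M)$. If $G'\subseteq H$ is any subfunctor with $v\in G'(M)$, then $G'$ is closed under the maps $H(g)$ for all morphisms $g$, so $H(g)(v)\in G'(N)$ for every $g:M\to N$. Hence $G_v\subseteq G'$.

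Next we show $v\in X_{G_v}(M)$. If $G'\subsetneq G_v$ and $v\in G'(M)$, then the minimality just proved gives $G_v\subseteq G'$, a contradiction. So $v$ lies in no proper subfunctor of $G_v$, and therefore $v\in X_{G_v}(M)$. This shows $H(M)=\bigcup_{G\subseteq H}X_G(M)$; the inclusion $\supseteq$ is clear because $X_G(M)\subseteq G(M)\subseteq H(M)$.

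For disjointness, suppose $v\in X_G(M)$ for some $G\subseteq H$. Then $v\in G(M)$, so minimality gives $G_v\subseteq G$. If the inclusion were proper, then $v\in G_v(M)$ with $G_v\subsetneq G$, contradicting $v\in X_G(M)$. Hence $G=G_v$. Thus $v$ lies in exactly one set $X_G(M)$, and the union is disjoint. Subfunctors here are genuine subobjects, so "$G=G_v$" means equality of subfunctors of $H$, not just isomorphism, and the index set is the set of subfunctors.

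The only step needing care is the existence of the minimal subfunctor $G_v$. Everything else is formal once the Yoneda description, together with closure of subfunctors under the maps $H(g)$, gives the minimality property. An alternative is to take $G_v$ to be the intersection of all subfunctors of $H$ containing $v$; intersections of subfunctors are computed pointwise, so this is again a subfunctor.
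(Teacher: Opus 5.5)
Your proof is correct and rests on the same idea as the paper's: each $v\in H(M)$ has a least subfunctor of $H$ containing it, and $X_G(M)$ is exactly the set of those $v$ whose least subfunctor is $G$. The paper obtains this least element by noting that $\{G\subseteq H \mid v\in G(M)\}$ is closed under intersections, which implicitly uses that $H$ has only finitely many subfunctors. You instead construct it directly as the image of the Yoneda map $\Hom_R(M,-)\to H$, the same kind of image the paper uses in the proof of Proposition~\ref{prop:redundancycriterion}; this makes existence explicit without any finiteness assumption and gives a slightly cleaner disjointness argument.
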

\begin{proof}
We first notice that that we have $X_{G_1}(M)\cap X_{G_2}(M)= \varnothing$ for $G_1\neq G_2$. Indeed, if $v\in X_{G_1}(M)$ and $v\in X_{G_2}(M)$ then 
$v\in G_1(M)\cap G_2(M) = (G_1\cap G_2)(M)$. But $G_1\cap G_2\subsetneq G_1,G_2$, which implies that $v\notin X_{G_1}(M)$ and $v\notin X_{G_2}(M)$, a contradiction.

It is clear that $X_G(M)\subseteq H(M)$ for every $G\subseteq H$. 
For $v\in H(M)$, write $Y_v = \{G| v\in G(M)\}$. Then $Y_v$ is ordered by inclusion, and it contains a single minimal element. Indeed, if $G_1$ and $G_2$ are two minimal functors in $Y_v$, then $G_1\cap G_2\in Y_v$ is minimal and $G_1\cap G_2\subseteq G_1,G_2$. 
Take $G$ to be the unique minimal element in $Y_v$. Then it holds that $v\in G(M)\backslash \bigcup_{G'\subsetneq G}G'(M)= X_G(M)$, which implies the lemma.  
\end{proof}
\begin{corollary}\label{cor:TL}
For $H\subseteq F$ it holds that $T_H = \sum_{G\subseteq H} L_G$. 
\end{corollary}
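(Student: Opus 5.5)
The plan is to start from Lemma \ref{lem:XG}, which for each $H\subseteq F$ gives a decomposition of $H(M)$ as a disjoint union $\bigsqcup_{G\subseteq H} X_G(M)$. I will then apply linearization to both sides of this decomposition.

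First, I unwind the definitions. Under the identification of Equation \ref{eq:hom.isos} with $F$ replaced by $0\oplus F$, the element $T_H\in\Hom_{\Aut_R(M)}(\one,KF(M))$ sends the basis vector $u_0$ of $K0(M)=\one$ to
$$\sum_{\substack{v\in F(M)\\ (0,v)\in H(M)}}u_v=\sum_{v\in H(M)}u_v .$$
Here I regard $H$ as a subfunctor of $0\oplus F\cong F$. Thus $T_H$ is the indicator sum of the finite set $H(M)\subseteq F(M)$.

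Second, I interpret $L_G$ as $\sum_{v\in X_G(M)}u_v$. The definition in the text writes $L_H$ with the index set $X_G(M)$, which I read as the evident typo for $L_G=\sum_{v\in X_G(M)} u_v$. This element is $\Aut_R(M)$-invariant: for $g\in\Aut_R(M)$ and any subfunctor $G'$, the action of $g$ preserves $G'(M)$, since $G'(g)$ restricts from $F(g)$. Hence $g$ permutes each $G'(M)$, and so it preserves $X_G(M)=G(M)\setminus\bigcup_{G'\subsetneq G}G'(M)$. Consequently $L_G$ lies in $\Hom_{\Aut_R(M)}(\one,KF(M))$.

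Third, I linearize the disjoint union from Lemma \ref{lem:XG}. Since the sets $X_G(M)$ for $G\subseteq H$ are pairwise disjoint and their union is $H(M)$, we get
$$T_H=\sum_{v\in H(M)}u_v=\sum_{G\subseteq H}\ \sum_{v\in X_G(M)}u_v=\sum_{G\subseteq H}L_G .$$
The sum over $G$ is finite: $F(M)$ is finite, so only finitely many $G$ have $X_G(M)\neq\varnothing$. In fact $F$ has only finitely many subfunctors, since each subfunctor is determined by its value on a finite additive generator. There is no real obstacle in this argument; the only point needing care is checking that the identification of $T_H$ with the sum of $u_v$ over $H(M)$ matches the convention of Equation \ref{eq:hom.isos}.
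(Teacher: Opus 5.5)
Your proof is correct and is the paper's argument: linearize the disjoint decomposition $H(M)=\bigsqcup_{G\subseteq H}X_G(M)$ from Lemma \ref{lem:XG} to get $T_H=\sum_{v\in H(M)}u_v=\sum_{G\subseteq H}\sum_{v\in X_G(M)}u_v=\sum_{G\subseteq H}L_G$. Your additional checks are all sound and only make explicit what the paper leaves implicit: the identification of $T_H$ with $\sum_{v\in H(M)}u_v$, reading the definition of $L_H$ with $X_H(M)$ in place of $X_G(M)$, the $\Aut_R(M)$-invariance of $L_G$, and the finiteness of the sum.
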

\begin{proof}
Using the equality from the previous lemma we get 
$$T_H = \sum_{v\in H(M)} u_v = \sum_{G\subseteq H}\sum_{v\in X_G(M)}u_v = \sum_{G\subseteq H}L_G.$$
\end{proof}
The last corollary shows us how to write the elements $T_H$ as linear combinations of the elemenets $L_G$.
We can also go the other way around.
\begin{lemma}\label{lem:LT} Let $H$ be a functor, and let $H_1,\ldots, H_r$ be the maximal proper subfunctors of $H$. 
It holds that \begin{equation}\label{eq:LH}L_H = T_H - \sum_i T_{H_i} + \sum_{i_1<i_2} T_{H_{i_1}\cap H_{i_2}} - \sum_{i_1<i_2<i_3}T_{H_{i_1}\cap H_{i_2}\cap H_{i_3}} + \cdots + (-1)^r T_{H_1\cap H_2\cap\cdots\cap H_r}.\end{equation}
\end{lemma}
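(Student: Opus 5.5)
This is an inclusion–exclusion statement about sets, and I would prove it by comparing both sides of \eqref{eq:LH} as subsets of $H(M)$. By definition, $T_K=\sum_{v\in K(M)}u_v$ for every $K\subseteq F$. Also $L_H=\sum_{v\in X_H(M)}u_v$, reading the definition of $L_H$ with $G=H$. So it suffices to show, for each $v\in H(M)$, that the coefficient of $u_v$ on the right-hand side is $1$ if $v\in X_H(M)$ and $0$ otherwise.

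The first step is to describe $X_H(M)$ in terms of the maximal subfunctors. Every proper subfunctor $H'\subsetneq H$ is contained in some maximal proper subfunctor $H_i$. This holds because $H$ is a finite object: $H(M)$ is finite and the subobject lattice satisfies the chain conditions, so the maximal proper subfunctors are finite in number and every proper subfunctor lies in one of them. Hence
\[
\bigcup_{H'\subsetneq H}H'(M)=\bigcup_{i=1}^r H_i(M),
\]
and therefore $X_H(M)=H(M)\setminus\bigcup_i H_i(M)$.

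The second step uses the fact, already used in the proof of Lemma \ref{lem:XG}, that intersections of subfunctors are computed pointwise: $(H_{i_1}\cap\cdots\cap H_{i_k})(M)=H_{i_1}(M)\cap\cdots\cap H_{i_k}(M)$. Substituting this into \eqref{eq:LH}, the right-hand side becomes
\[
\sum_{v\in H(M)}\Bigl(\sum_{S\subseteq\{1,\ldots,r\}}(-1)^{|S|}[\,v\in H_i(M)\ \forall i\in S\,]\Bigr)u_v,
\]
where the term $S=\varnothing$ contributes $T_H$.

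For a fixed $v$, let $I_v=\{i : v\in H_i(M)\}$. The inner sum is then $\sum_{S\subseteq I_v}(-1)^{|S|}$, which equals $1$ if $I_v=\varnothing$ and $0$ otherwise. Since $I_v=\varnothing$ exactly when $v\in X_H(M)$ by the first step, this gives \eqref{eq:LH}.

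The only step needing care is the claim that every proper subfunctor lies in a maximal one and that there are finitely many. This follows because $\C$ has finitely many indecomposables and $F$ takes values in finite abelian groups, so $H$ has finite length in $\C_{+1}$. The degenerate case $H=0$ is consistent: then $r=0$, and $L_0=T_0=u_0$.
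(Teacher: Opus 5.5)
Your proposal is correct and follows the paper's own proof: you identify $X_H(M)=H(M)\setminus\bigcup_{i=1}^r H_i(M)$ using that every proper subfunctor lies in a maximal one, and then apply inclusion--exclusion. Beyond what the paper writes, you spell out the coefficient computation $\sum_{S\subseteq I_v}(-1)^{|S|}$ and justify the finiteness and existence of maximal subfunctors, both of which the paper leaves implicit.
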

\begin{proof}
It holds that $X_H(M) = H(M) \backslash \bigcup_{H'\subsetneq H}H'(M) = H(M) \backslash\bigcup_{i=1}^r H_i(M)$. 
The equality now follows from applying the Inclusion-exclusion principle. 
\end{proof}
\begin{remark}\label{rem:ssi}
We could have also phrased the last lemma using all the subfunctors of $H$, instead of just the maximal ones. However, there will be a huge advantage in using only the maximal ones. Indeed, For every $i_1<i_2<\cdots<i_s$ it holds that $H/ (H_{i_1}\cap \cdots \cap H_{i_s})$ is a direct sum of simple functors. This will enable us to reduce some calculations in Section \ref{sec:relations} to the familiar playfield of the groups $\GL_n(\F_q)$, as any automorphism group of a semisimple functor is of the form $\prod_i \GL_{n_i}(\F_{q_i})$. 
\end{remark}
\begin{proposition}\label{prop:LHred}
If $H$ is $M$-redundant then $L_H=0$. The set $\{L_H\}$ for $H$ $M$-irredundant is a basis for $\Hom_{\Aut_R(M)}(\one,KF(M))$. 
\end{proposition}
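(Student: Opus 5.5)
The plan is to treat $\Hom_{\Aut_R(M)}(\one,KF(M))$ as the space of $\Aut_R(M)$-invariant functions on $F(M)$, and to decompose $F(M)$ into $\Aut_R(M)$-stable pieces using Lemma \ref{lem:XG} with $H=F$:
$$F(M)=\bigsqcup_{G\subseteq F} X_G(M).$$
Each $X_G(M)$ is $\Aut_R(M)$-stable. To see this, take $g\in\Aut_R(M)$, a subfunctor $G'\subseteq F$, and $v\in G'(M)$. Then $F(g)v=G'(g)v$ lies in $G'(M)$, because $G'$ is a subfunctor. Applying the same to $g^{-1}$ shows that membership in each $G'(M)$, and hence in each $X_G(M)$, is preserved by the action.

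For the redundant case, note that $L_H$ is, by definition, the sum of $u_v$ over $v\in X_H(M)$. If $H$ is $M$-redundant then $X_H(M)=\varnothing$ by the definition of $X_H(M)$, so $L_H=0$ immediately.

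For the basis statement, I would proceed as follows.

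\emph{Spanning.} By the Proposition (2.23 of \cite{CMO4}), combined with Equation \ref{eq:hom.isos} and the subsequent remark, the elements $T_H$ for $H\subseteq F$ span $\Hom_{\Aut_R(M)}(\one,KF(M))$. Corollary \ref{cor:TL} expresses each $T_H$ as a sum of the $L_G$ with $G\subseteq H$. Dropping those $L_G$ with $G$ $M$-redundant, which vanish, we see that the $L_G$ with $G$ $M$-irredundant span. Alternatively, one can avoid Corollary \ref{cor:TL} and use Lemma \ref{lem:LT} together with a triangularity argument, but Corollary \ref{cor:TL} is more direct.

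\emph{Linear independence.} For $M$-irredundant $G$ the set $X_G(M)$ is nonempty, so $L_G\neq 0$. Lemma \ref{lem:XG} shows that the sets $X_G(M)$ for distinct $G$ are pairwise disjoint. The $L_G$ are therefore nonzero vectors in $KF(M)$ with pairwise disjoint supports in the basis $\{u_v\}$, and such vectors are linearly independent.

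I expect no real obstacle here. The only point needing care is that the $L_G$ lie in the invariant space at all, which is exactly the $\Aut_R(M)$-stability of $X_G(M)$ established in the first step. The spanning claim rests entirely on the cited Proposition 2.23 of \cite{CMO4}.
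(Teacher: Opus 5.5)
Your proof is correct and follows essentially the same route as the paper: spanning via Proposition 2.23 of \cite{CMO4} combined with Corollary \ref{cor:TL}, vanishing of $L_H$ because $X_H(M)=\varnothing$, and linear independence from the pairwise disjoint supports given by Lemma \ref{lem:XG}. Your explicit check that each $X_G(M)$ is $\Aut_R(M)$-stable, so that $L_G$ really is an invariant vector, is a detail the paper leaves implicit. It also follows from Lemma \ref{lem:LT}, which writes $L_G$ as an alternating sum of the invariant elements $T_{G'}$.
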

\begin{proof}
Corollary \ref{cor:TL} shows that we can write the elements $T_H$ as linear combinations of the elements $L_H$. This implies that $\{L_H\}_{H\subseteq F}$ spans $\Hom_{\Aut_R(M)}(\one,KF(M))$. If $H$ is $M$-redundant then $X_H(M)=\varnothing$ and the definition then gives that $L_H=0$. On the other hand, the supports of $L_H$ for a functor $H$ that is $M$-irredundant is exactly $X_H(M)$. Since $X_H(M)\cap X_{H'}(M)=\emptyset$ for $H\neq H'$ by Lemma \ref{lem:XG}, the set $\{L_H\}$ where $H$ is an $M$-irredundant subfunctor of $F$ is linearly independent, and we get a basis for $\Hom_{\Aut_R(M)}(\one,KF(M))$.
\end{proof}

We thus see that the concept of $M$-redundancy is fundamental to understanding the hom-spaces and endomorphism algebras. The next proposition gives an explicit criterion for a functor to be $M$-redundant. 
\begin{proposition}\label{prop:redundancycriterion}
Assume that $F$ has a minimal presentation $(X\stackrel{f}{\to} Y)$. Then $F$ is $M$-redundant if and only if $X$ is not a direct summand of $M$.
\end{proposition}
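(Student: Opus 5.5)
We use Yoneda: $F(M)\cong \Hom_{\C_{+1}}(\Hom_R(M,-),F)$. An element $v\in F(M)$ lies in $H(M)$ for a subfunctor $H\subseteq F$ exactly when the corresponding natural transformation $\tilde v:\Hom_R(M,-)\to F$ factors through $H$. So $v\in X_F(M)$ (that is, $v$ lies in no proper subfunctor) if and only if $\tilde v$ is surjective. Hence $F$ is $M$-irredundant if and only if there is an epimorphism $\Hom_R(M,-)\to F$. The statement thus reduces to showing that such an epimorphism exists if and only if $X$ is a direct summand of $M$.

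If $M=X\oplus Z$, compose the projection $\Hom_R(M,-)\cong\Hom_R(X,-)\oplus\Hom_R(Z,-)\to\Hom_R(X,-)$ with the surjection $\Hom_R(X,-)\to F$ coming from the presentation. This gives an epimorphism, so $F$ is $M$-irredundant.

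Conversely, suppose $\Hom_R(M,-)\to F$ is an epimorphism. Minimality of the presentation means that $\Hom_R(X,-)\to F$ is a projective cover in $\C_{+1}$. The category $\C_{+1}$ is Krull--Schmidt, with enough projectives of the form $\Hom_R(W,-)$ for $W\in\C$ (Appendix A of \cite{CMO4}). Since $\Hom_R(M,-)$ is projective, the epimorphism lifts to a map $\Hom_R(M,-)\to\Hom_R(X,-)$ over $F$. Its image together with the kernel of the cover covers $\Hom_R(X,-)$. That kernel is superfluous, so the lift is surjective. Because $\Hom_R(X,-)$ is projective, the lift splits, and $\Hom_R(X,-)$ is a direct summand of $\Hom_R(M,-)$.

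By Yoneda, $\Hom_R(-,-)$ restricted to $\C$ is a fully faithful contravariant embedding into projectives. So the splitting maps $X\to M\to X$ come from $R$-module maps whose composite is $\Id_X$. Hence $X$ is a direct summand of $M$ in $\C$.

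The main obstacle is the justification of the projective-cover facts in $\C_{+1}$: that the cover kernel is superfluous and that the projectives are representables. Since $\C$ has finitely many indecomposables with local endomorphism rings of finite length, I would cite these from \cite{CMO4}, Appendix A, rather than reprove them.
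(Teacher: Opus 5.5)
Your proof is correct. For one of the two implications it takes a simpler route than the paper.

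For ``$M$-irredundant $\Rightarrow$ $X$ is a summand of $M$'' the two arguments coincide:
\begin{itemize}
\item The paper lifts $v\in X_F(M)$ to some $\phi\in p_M^{-1}(v)$; this is your lift $\ell=\phi^*$ obtained via Yoneda.
\item It uses minimality to pass from surjectivity of $p\phi^*$ to surjectivity of $\phi^*$; this is your superfluous-kernel step.
\item It then evaluates at $X$ to get $\psi$ with $\psi\phi=\Id_X$. This makes your splitting step via projectivity of $\Hom_R(X,-)$ unnecessary, though harmless.
\end{itemize}

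For ``$X$ is a summand $\Rightarrow$ $M$-irredundant'' the paper argues by contradiction. It assumes $p_M(\phi)\in G(M)$ for a split injection $\phi$ and a proper $G\subsetneq F$. It then lifts through the projective cover $\Hom_R(Z,-)$ of $G$ and writes $\psi g=\phi+\rho f$. Using minimality (nilpotency of $\nu\rho f$), it shows $g$ is a split injection, whence $G=F$. Your observation replaces this whole computation: $p\circ\phi^*$ is a composite of two epimorphisms, so $p_M(\phi)$ generates $F$ and lies in no proper subfunctor. It also shows that this implication holds for any presentation, minimal or not.

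Your opening reformulation ($F$ is $M$-irredundant iff there is an epimorphism $\Hom_R(M,-)\to F$) is used only implicitly in the paper, and it makes both directions transparent.

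The Krull--Schmidt property and the description of projectives in $\C_{+1}$, which you flag as the main obstacle, play no role in your argument. You only need two facts: $\Hom_R(M,-)$ is projective, which is Yoneda; and $\Ker(p)$ is superfluous, which is exactly what minimality of the presentation gives.
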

\begin{proof} 
Write $p:\Hom_R(X,-)\to F$. 
Assume first that $F$ is $M$-irredundant. Take $v\in F(M)\backslash \bigcup_{G\subsetneq F} G(M)$. Take $\phi\in p_M^{-1}(v)$. Then $\phi\in \Hom_R(X,M)$. 
In particular, it induces a natural transformation $\phi^*:\Hom_R(M,-)\to \Hom_R(X,-)$. We then have that $\Im(p\phi^*) = G$ is a subfunctor of $F$. It holds that $p\phi^*(\Id_M) = p(\phi) = v\in G(M)$. But $v$ is not contained in any $H(M)$ for $H$ a proper subfunctor of $F$, so $G=F$. Thus means that $\Im(p\phi^*)=F$, and so $p\phi^*$ is surjective. Since $(X\stackrel{f}{\to}Y)$ is a minimal presentation of $F$, this implies that $\phi^*:\Hom(M,-)\to \Hom_R(X,-)$ is surjective. In particular, $\phi^*:\Hom_R(M,X)\to\Hom_R(X,X)$ is surjective, and so there is  a $\psi:M\to X$ such that $\phi^*(\psi) = \psi\phi = \Id_X$. That is- $X$ is a direct summand of $M$. 

In the other direction, assume that $X$ is a direct summand of $M$ and that $F$ is $M$-redundant. Let $\phi:X\to M$ be a split injection. Then $p(\phi)\in F(M)$, and since $F$ is $M$-redundant, there is a proper subfunctor $G\subsetneq F$ such that $p(\phi)\in G(M)$. 
Let $\Hom_R(Z,-)$ be the projective cover of $G$. We get the following commutative diagram:
$$\xymatrix{ & \Hom_R(Z,-)\ar[r]^-{p'}\ar[d]^{g^*}& G\ar[d]\\ \Hom_R(Y,-)\ar[r]^{f^*} & \Hom_R(X,-)\ar[r]^-p& F}.$$
Let $\psi\in p'^{-1}(p(\phi))$. Then it holds that $p(g^*(\psi))= p(\psi\circ g) = p(\phi)$, which implies that $\psi g - \phi\in \Ker(p_M)$. But $\Ker(p_M)$ contains exactly the maps $X\to M$ that split over $f$, so we can  write $\psi g = \phi + \rho f $ for some $\rho:Y\to M$. Since $\phi$ is a split injection, there is a map $\nu:M\to X$ such that $\nu\phi = \Id_X$. We then get the equality 
$$\nu\psi g = \Id_X + \nu\rho f.$$ Since $(X\stackrel{f}{\to} Y)$ is a minimal presentation of $F$ the map $\nu\rho f$ must be nilpotent, and $\Id_X + \nu\rho f$ is therefore invertible. We then get 
$$(\Id_X + \nu\rho f)^{-1}\nu\psi g = \Id_X.$$ This means that $g:X\to Z$ is a split injection, which means that $g^*$ is surjective. But by diagram chasing this means that the inclusion $G\to F$ is surjective, contradicting the fact that $G$ is a proper subfunctor of $F$. 
\end{proof}
The following corollary follows immediately from the proof of the proposition:
\begin{corollary}
Let $M,F,X$, and $Y$ be as above. Then $X_F(M)= \{p_M(\phi) | \phi:X\to M\text{ is a split injection}\}$.
\end{corollary}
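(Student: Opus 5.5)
The claim is that $X_F(M)=\{p_M(\phi)\mid \phi:X\to M \text{ is a split injection}\}$. We prove the two inclusions by re-reading the two halves of the proof of Proposition \ref{prop:redundancycriterion} pointwise. Throughout, $X_F(M)=F(M)\backslash\bigcup_{G\subsetneq F}G(M)$, and $p:\Hom_R(X,-)\to F$ is the projective cover coming from the minimal presentation $(X\stackrel{f}{\to}Y)$.

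For the inclusion $\subseteq$, take $v\in X_F(M)$ and choose any $\phi\in p_M^{-1}(v)$. The first half of the proof of the proposition applies word for word. The image of $p\phi^*:\Hom_R(M,-)\to F$ is a subfunctor whose value at $M$ contains $v$, so it must be all of $F$. Minimality of the presentation, meaning that $p$ is a projective cover, then forces $\phi^*:\Hom_R(M,-)\to\Hom_R(X,-)$ to be surjective. Evaluating at $X$ gives some $\psi$ with $\psi\phi=\Id_X$. Hence $\phi$ is a split injection and $v=p_M(\phi)$.

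For the inclusion $\supseteq$, let $\phi:X\to M$ be a split injection and suppose, for contradiction, that $p_M(\phi)\in G(M)$ for some proper subfunctor $G\subsetneq F$. The second half of the proof used only this situation: a split injection $\phi$ with $p(\phi)$ lying in a proper subfunctor. The hypothesis that $F$ is $M$-redundant served only to produce such a $G$.

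The argument then runs as follows. Lift along the projective cover $\Hom_R(Z,-)\to G$ to get $\psi$ with $\psi g=\phi+\rho f$. Compose with a retraction $\nu$ of $\phi$ to get $\nu\psi g=\Id_X+\nu\rho f$. This is invertible because $\nu\rho f$ is nilpotent by minimality. Hence $g$ is split injective, so $g^*$ is surjective, and therefore $G\to F$ is surjective, a contradiction. So $p_M(\phi)\in X_F(M)$.

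The only point needing care is the nilpotency of $\nu\rho f$, which is the same step as in the proposition. Minimality of the resolution means $f^*$ maps into the radical of $\Hom_R(X,-)$, so $f$ lies in the radical of $\C$. Then $\nu\rho f\in\operatorname{rad}\End_R(X)$, which is nilpotent since $\End_R(X)$ is a finite ring. Nothing beyond the proposition's argument is needed, so the statement indeed follows immediately.
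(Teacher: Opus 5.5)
Your proposal is correct and follows the same approach as the paper, which states that the corollary ``follows immediately from the proof of the proposition'': the first half of the proof of Proposition~\ref{prop:redundancycriterion} gives $\subseteq$, and the second half gives $\supseteq$. You are right that in that second half the $M$-redundancy hypothesis serves only to produce a proper subfunctor $G$ with $p_M(\phi)\in G(M)$.
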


Consider now the space $\End_{\Aut_R(M)}(KF(M))\cong \Hom_{\Aut_R(M)}(\one,KF(M)\ot KF(M))\cong \Hom_{\Aut_R(M)}(\one,K(F\oplus F)(M))$. 
This space is spanned by $\{T_H\}_{H\subseteq F\oplus F}$. The subrepresentation $\ol{KF(M)}$ of $KF(M)$ is characteristic, and so restriction gives an algebra homomorphism $$\Theta_F(M):\End_{\Aut_R(M)}(KF(M))\to \End_{\Aut_R(M)}(\ol{KF(M)}).$$ Since $\Rep(\Aut_R(M))$ is semisimple, this map is surjective. 
\begin{definition} We say that a functor $H\subseteq F\oplus F$ is $F$-tight if it holds that both $pr_1,pr_2:H\to F$ are isomorphisms. Such functors $H$ are in one-to-one correspondence with elements of $\Autc(F)$, where $\alpha\in \Autc(F)$ corresponds to the functor 
$$H_{\alpha} = \Im(F\xrightarrow{\Id_F\oplus \alpha} F\oplus F).$$
\end{definition}
The following lemma is a consequence of Proposition 3.3. in \cite{CMO4}.
\begin{lemma}
The kernel of $\Theta_F(M)$ is spanned by the elements $T_H$ where $H$ is not $F$-tight. 
\end{lemma}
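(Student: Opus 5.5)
The plan is to reduce the lemma to a statement about a single projective resolution. The inputs are the explicit description of $\End_{\Aut_R(M)}(KF(M))$ through the elements $T_H$ for $H\subseteq F\oplus F$, and the Hom-space decomposition in the basis $\{L_H\}$. We know that $\End_{\Aut_R(M)}(KF(M))$ is spanned by the $T_H$ with $H\subseteq F\oplus F$, and $\Theta_F(M)$ restricts it onto $\End_{\Aut_R(M)}(\ol{KF(M)})$. The claim is that $\Ker\Theta_F(M)$ equals the span $N$ of the $T_H$ with $H$ not $F$-tight. I will show the two inclusions $N\subseteq\Ker\Theta_F(M)$ and $\dim(\End/N)\le \dim\End(\ol{KF(M)})$ separately, using Theorem B and Proposition 3.3 of \cite{CMO4} as the black box.

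\textbf{Step 1: $N$ lies in the kernel.} Let $H\subseteq F\oplus F$ be non-tight. Then one of the projections $pr_1|_H$ or $pr_2|_H$ is not an isomorphism. Say $pr_2|_H$ has image $F'\subsetneq F$, or else its kernel is nonzero.

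In the first case $T_H$ factors through $KF'(M)$. Indeed, $T_H=T_{H'}\circ T_{H''}$ with $H''=\{(v,w)\}\subseteq F\oplus F'$ the same relation, followed by the inclusion of $F'$. Since $F'\prec F$, the definition of $\ol{KF(M)}$ forces $T_H$ to vanish on $\ol{KF(M)}$. The case where $pr_1$ has image a proper subfunctor is symmetric: we use the adjoint, since $T_H^*=T_{H^{op}}$ because permutation modules are self-dual.

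If instead both projections are surjective but one has a kernel $K\neq 0$, then $H$ corresponds to a relation through the quotient $F/K$. Writing $H$ as the composite of the graph of $F\to F/K$ with a relation from $F/K$ to $F$, $T_H$ factors through $K(F/K)(M)$, and $F/K\prec F$. So again $T_H$ kills $\ol{KF(M)}$.

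\textbf{Step 2: the dimension count.} The tight $T_{H_\alpha}$ act on $KF(M)$ exactly as the elements $\alpha\in\Autc(F)$. So $\End/N$ is spanned by the images of the $\alpha$'s, and this spanning map is compatible with $\Phi_F(M)$ composed with $\Theta_F(M)$.

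Let $J=\Ker\Theta_F(M)\supseteq N$. Then $\End/J\cong\End(\ol{KF(M)})$, and Theorem B states that $K\Autc(F)\to\End(\ol{KF(M)})$ is surjective. To get equality $N=J$ we need that every $T$ in $J$ lies in $N$. Equivalently: if a combination $\sum c_\alpha\alpha$ acts as zero on $\ol{KF(M)}$, then $\sum c_\alpha T_{H_\alpha}\in N$.

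\textbf{Main obstacle.} This reverse inclusion is exactly the content of Proposition 3.3 of \cite{CMO4}. That proposition describes the ideal generated by the morphisms factoring through $KG(M)$ for $G\prec F$ as the span of non-tight relations. Every morphism factoring through some $KG(M)$ with $G\prec F$ is a composite $T_{H_2}T_{H_1}$ with $H_1\subseteq F\oplus G$ and $H_2\subseteq G\oplus F$. Such a composite is a linear combination of $T_{H}$ with $H$ the relation composite, by the composition formula for relations, and it is non-tight because it factors through $G\not\cong F$.

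Since $\Rep(\Aut_R(M))$ is semisimple, $J$ is precisely the span of endomorphisms of $KF(M)$ factoring through the complement of $\ol{KF(M)}$. That complement is a sum of images of the $KG(M)$ with $G\prec F$. Hence $J$ is contained in the span of these composites, which lies in $N$. The delicate point I would check carefully is this last claim: that the complement of $\ol{KF(M)}$ in $KF(M)$ is generated by images of maps from $KG(M)$ with $G\prec F$, which is what Proposition 3.3 provides.
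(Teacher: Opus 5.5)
Your proposal is correct. It takes a more self-contained route than the paper, whose entire justification is that the lemma is a consequence of Proposition 3.3 of \cite{CMO4}.

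\textbf{How your route differs.} You prove $N\subseteq\Ker\Theta_F(M)$ by factoring each non-tight $T_H$ through $KF'(M)$, where $F'$ is a proper subfunctor or proper quotient of $F$. When the kernel sits in the second coordinate ($\ker pr_1|_H\neq 0$), you need the mirror-image factorization, through the fibre-sum map $K(F/K)(M)\to KF(M)$, $u_{\bar w}\mapsto\sum_{w\mapsto\bar w}u_w$; this is routine. You get the reverse inclusion from semisimplicity together with the spanning of Hom-spaces by the $T_H$ (Proposition 2.23 of \cite{CMO4}). So the lemma visibly rests only on that spanning result and elementary relation calculus. Combined with spanning, it even recovers the surjectivity in Theorem B, so the ``dimension count'' framing and the appeal to Theorem B are unnecessary.

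\textbf{Where the difficulty actually lies.} You have placed it in the wrong spot. That the complement of $\ol{KF(M)}$ is a quotient, hence a summand, of a direct sum of $KG(M)$'s with $G\prec F$ follows at once from the definition of $\ol{KF(M)}$ and semisimplicity; no citation is needed. The step that does need justification is that a composite through $KG(M)$ lies in $N$. Also note that a map factoring through $KG(M)$ is a linear combination of composites $T_{H_2}T_{H_1}$, not a single one. Here is the missing argument.

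Take $H_1\subseteq F\oplus G$ and $H_2\subseteq G\oplus F$, and set $P=H_1\times_G H_2$. The fibres of $P\to F\oplus F$ are cosets of $L(M)$, where $L=\{x\mid (0,x)\in H_1,\ (x,0)\in H_2\}$. Hence $T_{H_2}T_{H_1}=|L(M)|\,T_{H_2\circ H_1}$.

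Suppose $H_2\circ H_1=H_\alpha$ were tight. If $(v,0)\in H_1$, then $(v,0,0)\in P$, so $(v,0)\in H_\alpha$, forcing $\alpha(v)=0$ and so $v=0$. The same argument applies to $H_2$. Therefore the middle projection $P\to G$ is injective. It follows that $F\cong H_\alpha$ is a quotient of a subfunctor of $G$. This is impossible by comparing lengths when $G\prec F$.
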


\begin{definition} We will denote by $\ol{T_H}$ and $\ol{L_H}$ the images of $T_H$ and $L_H$ in $\End_{\Aut_R(M)}(\ol{KF(M)})$ respectively. 
\end{definition}
We summarize:
\begin{proposition}
The space $\End(\ol{KF(M)})$ is spanned by the elements $\{\ol{T_H}\}_{H\subseteq F\oplus F}$. All the linear relations between the elements $\ol{T_H}$ are given by:
\begin{itemize}
\item $\ol{T_H}=0$ if $H$ is not $F$-tight.
\item $\ol{L_H}=0$ if $H$ is $M$-redundant. 
\end{itemize}
\end{proposition}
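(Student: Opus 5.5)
This is a summary of what came before, so the plan is to assemble the earlier results. First, spanning. By Theorem B the composite map $K\Autc(F)\to\End_{\Aut_R(M)}(\ol{KF(M)})$ is surjective. Independently, by Proposition 2.23 together with Equation \ref{eq:hom.isos}, $\End_{\Aut_R(M)}(KF(M))$ is spanned by $\{T_H\}_{H\subseteq F\oplus F}$. Since $\Theta_F(M)$ is surjective by semisimplicity, the images $\ol{T_H}$ span $\End_{\Aut_R(M)}(\ol{KF(M)})$.

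Second, I would check that both listed families of relations hold. The first, $\ol{T_H}=0$ for $H$ not $F$-tight, is exactly the containment of these elements in $\Ker\Theta_F(M)$ from the lemma derived from Proposition 3.3 of \cite{CMO4}. For the second, suppose $H\subseteq F\oplus F$ is $M$-redundant. Apply Proposition \ref{prop:LHred} to the functor $F\oplus F$ in place of $F$. This gives $L_H=0$ already in $\Hom_{\Aut_R(M)}(\one,K(F\oplus F)(M))\cong\End_{\Aut_R(M)}(KF(M))$, hence $\ol{L_H}=0$.

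Third, completeness. I would argue that every linear relation among the $\ol{T_H}$ follows from the two families. Take $\sum_H c_H\ol{T_H}=0$. Then $\sum_H c_H T_H$ lies in $\Ker\Theta_F(M)$, which by the cited lemma is the span of the non-tight $T_H$. So there are scalars $d_H$, supported on non-tight $H$, with $\sum_H c_H T_H-\sum_{H\text{ non-tight}}d_H T_H=0$ in $\End_{\Aut_R(M)}(KF(M))$. This reduces the problem to relations among the $T_H$ themselves, before restricting to $\ol{KF(M)}$. Corollary \ref{cor:TL} and Lemma \ref{lem:LT} give mutually inverse, unitriangular changes of basis between the families $\{T_H\}$ and $\{L_H\}$ over all subfunctors of $F\oplus F$. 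By Proposition \ref{prop:LHred}, the nonzero $L_H$, namely those with $H$ $M$-irredundant, are linearly independent. Hence the kernel of the map from the formal span of the symbols $T_H$ to $\End_{\Aut_R(M)}(KF(M))$ is spanned exactly by the relations $L_H=0$ for $H$ $M$-redundant. Combining the two steps, the original relation is a consequence of the non-tight relations and the redundancy relations.

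The main point needing care is this last bookkeeping step. The relations must be formalized in the free vector space on the symbols $\{T_H\}$, with $L_H$ defined there by the inclusion–exclusion formula of Lemma \ref{lem:LT}. One then checks that the kernel of the composite map to $\End(\ol{KF(M)})$ is the sum of two subspaces: the preimage of $\Ker\Theta_F(M)$ and the kernel of the map to $\End(KF(M))$. The first is spanned by the non-tight symbols, by the lemma. The second is spanned by the redundant $L_H$, via the triangularity argument. Both facts are already available, so no new input beyond the earlier results is needed.
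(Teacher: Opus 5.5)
Your proposal is correct and follows the paper's argument. The relations among the $T_H$ in $\End_{\Aut_R(M)}(KF(M))$ are exactly $L_H=0$ for $M$-redundant $H$ (Proposition \ref{prop:LHred} applied to $F\oplus F$), and the kernel of $\Theta_F(M)$ is spanned by the non-tight $T_H$; you simply make explicit the bookkeeping in the free vector space on the symbols $T_H$ that the paper leaves implicit (the appeal to Theorem B is unnecessary, and the unitriangularity of the formula in Lemma \ref{lem:LT} is already enough).
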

\begin{proof}
The linear relations among the elements $T_H$ in $\End_{\Aut_R(M)}(KF(M))$ are given by $L_H=0$ if and only if $H$ is $M$-redundant. The kernel of $\End_{Aut_R(M)}(KF(M))\to \End_{\Aut_R(M)}(\ol{KF(M)})$ contains all the $T_H$ such that $H$ is not $F$-tight. This proves the proposition.
\end{proof}
Understanding the first type of relations in the proposition is easy. We give a set of linear generators of the space $\End_{\Aut_R(M)}(\ol{KF(M)})$, and the first type of relations says that some of them are zero. The relations of the second type are more difficult, as we need to understand how they affect the element $\ol{T_H}$ for functors $H$ that are $F$-tight. 
Let now $H\subseteq F\oplus F$ be any subfunctor. We recall Equation \ref{eq:LH}:
\begin{equation}\label{eq:LH2}L_H = T_H - \sum_i T_{H_i} + \sum_{i_1<i_2} T_{H_{i_1}\cap H_{i_2}} - \sum_{i_1<i_2<i_3}T_{H_{i_1}\cap H_{i_2}\cap H_{i_3}} + \cdots + (-1)^r T_{H_1\cap H_2\cap\cdots\cap H_r}.\end{equation}
Where $H_1,\ldots, H_r$ are the maximal proper subfunctors of $H$. The functors of the form $H_{i_1}\cap\cdots\cap H_{i_k}$ are exactly the subfunctors $H'$ of $H$ such that $H/H'$ is semisimple. We thus see that $F$-tight functors will appear in Equation \ref{eq:LH2} if and only if $H$ contains an $F$-tight subfunctor $H'$ such that $H/H'$ is semisimple. Assume that this is indeed the case. 
Since $H'$ is $F$-tight we can write $H'=H_{\alpha}$ for some $\alpha\in \Autc(F)$. We claim the following:
\begin{lemma}
The functor $H$ splits as a direct sum $H=H_{\alpha}\oplus G$, where $G\subseteq 0\oplus F$. 
\end{lemma}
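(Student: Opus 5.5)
Set $G := H\cap (0\oplus F)$, i.e. $G(X)=\{(0,y)\mid (0,y)\in H(X)\}$; this is a subfunctor of $H$ contained in $0\oplus F$. The plan is to show that $H=H_{\alpha}\oplus G$ as subfunctors of $F\oplus F$, by checking separately that $H_\alpha\cap G=0$ and that $H_\alpha+G=H$.

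The intersection is immediate. If $(0,y)\in H_\alpha(X)$, then $(0,y)=(x,\alpha(x))$ for some $x\in F(X)$. This forces $x=0$, and hence $y=\alpha(0)=0$. So $H_\alpha\cap G=0$.

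For the sum, take $(x,y)\in H(X)$. Since $H_\alpha\subseteq H$, the element $(x,\alpha(x))$ also lies in $H(X)$. Therefore $(x,y)-(x,\alpha(x))=(0,y-\alpha(x))$ lies in $H(X)\cap (0\oplus F)(X)=G(X)$. Hence $(x,y)=(x,\alpha(x))+(0,y-\alpha(x))\in (H_\alpha+G)(X)$. Both pieces are subfunctors and all the identities above hold pointwise in $X$, so $H=H_\alpha\oplus G$ as functors.

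The only place one might expect difficulty is in producing a complement at all: in a non-semisimple category a subfunctor with semisimple quotient need not split off. Here this is handled because $pr_1|_{H_\alpha}:H_\alpha\to F$ is an isomorphism. Equivalently, $H\to F\xrightarrow{(pr_1|_{H_\alpha})^{-1}} H_\alpha$ is a retraction of the inclusion $H_\alpha\subseteq H$, and its kernel is $\ker(pr_1|_H)=G$. So the hypothesis that $H/H_\alpha$ is semisimple is not needed for the splitting. It only ensures that $G\cong H/H_\alpha$ is semisimple, which is how the lemma will be used later.
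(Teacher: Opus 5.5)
Your proof is correct and is essentially the paper's argument. The paper splits the inclusion $H_\alpha\subseteq H$ by the retraction $(1,\alpha)\circ pr_1\colon H\to F\to F\oplus F$ and identifies its kernel as $H\cap(0\oplus F)$; this is exactly your map $(pr_1|_{H_\alpha})^{-1}\circ pr_1$, and your elementwise decomposition $(x,y)=(x,\alpha(x))+(0,y-\alpha(x))$ just unwinds it (and, as you note, semisimplicity of $H/H_\alpha$ plays no role, which matches the paper's later use of the same splitting in Lemma~\ref{lem:beta.criterion}).
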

\begin{proof}
The inclusion $H_{\alpha}\to H$ splits by $(1,\alpha)pr_1:H\to F\to F\oplus F$. The kernel of this map is $(0\oplus F)\cap H=G$. 
\end{proof}
\begin{lemma}\label{lem:XM}
Assume that $F$ has a minimal presentation $(X\stackrel{f}{\to} Y)$. 
If $X$ is not a direct summand of $M$ then $\ol{KF(M)}=0$. 
\end{lemma}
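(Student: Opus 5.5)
We will show that $\Id_{\ol{KF(M)}}=\ol{T_{H_{\Id}}}$ vanishes, using the relations collected in the summarizing proposition. Since $\ol{KF(M)}$ is a direct summand of $KF(M)$, restriction gives a surjective algebra map $\Theta_F(M)$, and $\Theta_F(M)(T_{H_{\Id}})=\Id_{\ol{KF(M)}}$ because $T_{H_{\Id}}$ is the identity of $KF(M)$. So it suffices to exhibit a relation expressing $\ol{T_{H_{\Id}}}$ as zero.

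Take $H=F\oplus F$ viewed as a subfunctor of $F\oplus F$, or more economically $H=H_{\Id}$ itself. By the proposition on $M$-redundancy, $H_{\Id}\cong F$ has the same minimal presentation $(X\stackrel{f}{\to}Y)$. Then Proposition \ref{prop:redundancycriterion} says $H_{\Id}$ is $M$-redundant exactly because $X$ is not a direct summand of $M$. Hence $L_{H_{\Id}}=0$ by Proposition \ref{prop:LHred}, and so $\ol{L_{H_{\Id}}}=0$.

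Now expand $L_{H_{\Id}}$ via Equation \ref{eq:LH2}:
$$L_{H_{\Id}}=T_{H_{\Id}}-\sum_i T_{H_i}+\cdots,$$
where the $H_i$ are the maximal proper subfunctors of $H_{\Id}$. Every term other than the first is $T_{H'}$ for some proper subfunctor $H'\subsetneq H_{\Id}$. Such an $H'$ is not $F$-tight, since $pr_1|_{H'}$ is injective but its image $pr_1(H')\subsetneq F$ is proper, so it is not an isomorphism. Therefore all these terms satisfy $\ol{T_{H'}}=0$. We conclude
$$0=\ol{L_{H_{\Id}}}=\ol{T_{H_{\Id}}}=\Id_{\ol{KF(M)}},$$
so $\ol{KF(M)}=0$.

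The only point needing care is that $M$-redundancy is a property of the abstract functor: the isomorphism $pr_1:H_{\Id}\to F$ carries subfunctors to subfunctors and $H_{\Id}(M)$ to $F(M)$, so $H_{\Id}$ is $M$-redundant if and only if $F$ is. That $H_{\Id}$ has the minimal presentation $(X\stackrel{f}{\to}Y)$ is immediate from the same isomorphism. Beyond this, the argument is a direct application of the previously established relations, so I expect no real obstacle.
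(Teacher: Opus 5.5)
Your proof is correct and is essentially the paper's own argument: $H_{\Id_F}\cong F$ is $M$-redundant by Proposition~\ref{prop:redundancycriterion}, so $\ol{L_{H_{\Id_F}}}=0$. Every other term in the inclusion--exclusion expansion of $L_{H_{\Id_F}}$ is $T_{H'}$ for a proper, hence non-$F$-tight, subfunctor $H'$, which forces $\Id_{\ol{KF(M)}}=\ol{T_{H_{\Id_F}}}=0$. \emph{Drop your aside offering $H=F\oplus F$ as an alternative:} an $F$-tight $H_\beta$ appears in the expansion of $L_{F\oplus F}$ only when $(F\oplus F)/H_\beta\cong F$ is semisimple, and even then the relation only yields $\sum_{\beta}\ol{T_{H_\beta}}=0$, so $H_{\Id_F}$ is the choice that actually works.
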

\begin{remark} This gives a new proof to Lemma 4.7. in \cite{CMO4}. 
\end{remark}
\begin{proof}
Assume that the conditions of the lemma hold. Consider $H_{\Id_F} = \Delta(F)\subseteq F\oplus F$. Like every $F$-tight functor $H_{\Id_F}\cong F$. Since $X$ is not a direct summand of $M$, Proposition \ref{prop:redundancycriterion} tells us that $F$ is $M$-redundant. It then follows that $\ol{L_{H_{\alpha}}}=0$ by Proposition \ref{prop:LHred}. 
But $L_{H_{\Id_F}}$ is an alternating sum of $T_{G}$'s, with $G\subseteq H_{\Id_F}$. Since the only $G\subseteq H_{\Id_F}$ which is $F$-tight and which appears in the sum is $H_{\Id_F}$ itself, and $\ol{T_G}=0$ for $G$ not $F$-tight, we get that $\ol{T_{H_{\Id_F}}}=0$. This means that the identity of the algebra $\End_{\Aut_R(M)}(\ol{KF(M)})$ is equal to zero, which can only happen when $\ol{KF(M)}=0$. 
\end{proof}
Assume from now on that $F$ has a presentation $(X\stackrel{f}{\to} Y)$, and that $X$ is a direct summand of $M$. Write $M=X\oplus Z$. 
Let $H$ be a functor that contains some $H_{\alpha}$. Then we can write $H=H_{\alpha}\oplus G$. Assume that $G$ has a minimal presentation $(X_1\stackrel{f_1}{\to} Y_1)$. 
\begin{lemma}
The functor $H$ is $M$-redundant if and only if the functor $G$ is $Z$-redundant. 
\end{lemma}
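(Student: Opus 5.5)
We reduce the statement to Proposition \ref{prop:redundancycriterion}, applied once to $H$ and once to $G$, by computing a minimal presentation of $H=H_\alpha\oplus G$. Since $H_\alpha\cong F$, and minimal presentations of a direct sum are direct sums of minimal presentations, $H$ has the minimal presentation $(X\oplus X_1\xrightarrow{f\oplus f_1} Y\oplus Y_1)$. Minimality is preserved because the projective cover of a direct sum is the direct sum of the projective covers. By Proposition \ref{prop:redundancycriterion}, $H$ is $M$-irredundant if and only if $X\oplus X_1$ is a direct summand of $M=X\oplus Z$. Likewise, $G$ is $Z$-irredundant if and only if $X_1$ is a direct summand of $Z$.

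It therefore remains to show that $X\oplus X_1$ is a direct summand of $X\oplus Z$ if and only if $X_1$ is a direct summand of $Z$. One direction is immediate: if $Z\cong X_1\oplus Z'$, then $X\oplus Z\cong X\oplus X_1\oplus Z'$.

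For the converse we use the Krull--Schmidt property of $\C$. This holds because $\C$ consists of finite $R$-modules, whose endomorphism rings are finite and hence semiperfect, so decompositions into indecomposables are unique up to isomorphism and reordering. Assume $X\oplus Z\cong X\oplus X_1\oplus W$. Comparing the multiplicity of each indecomposable $M_i$ on both sides and cancelling the contribution of $X$, we see that the multiplicity of $M_i$ in $X_1$ is at most its multiplicity in $Z$. Hence $X_1$ is a direct summand of $Z$.

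The step I expect to need the most care is the claim that the direct sum of minimal presentations is again minimal. Here we need the notion of minimality from Appendix A of \cite{CMO4}, namely that $\Hom_R(X,-)\to F$ is a projective cover and $\Hom_R(Y,-)$ is a projective cover of the kernel. Given that characterisation, the claim follows from the additivity of projective covers in $\C_{+1}$. Alternatively, one can avoid minimality of the whole presentation and use only the proof of Proposition \ref{prop:redundancycriterion}, which in effect uses just that $\Hom_R(X,-)\to F$ is a projective cover and that $\ker$ lies in the radical. Both properties are visibly stable under direct sums.
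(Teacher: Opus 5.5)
Your proposal is correct and follows essentially the same route as the paper's proof. You take the minimal presentation $(X\oplus X_1\xrightarrow{f\oplus f_1}Y\oplus Y_1)$ of $H=H_\alpha\oplus G$, apply Proposition~\ref{prop:redundancycriterion} twice (to $H$ with $M$ and to $G$ with $Z$), and finish with Krull--Schmidt cancellation in $\C$; your justification that direct sums of minimal presentations stay minimal (additivity of projective covers) fills in a step the paper only asserts.
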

\begin{proof}
The functor $H$ splits as $H_{\alpha}\oplus G$. The minimal resolution of a direct sum of functors is the direct sum of the minimal resolutions. Since $H_{\alpha}\cong F$, $H$ has a minimal resolution of the form $(X\oplus X_1\stackrel{f\oplus f_1}{\to} Y\oplus Y_1)$. By Proposition \ref{prop:redundancycriterion} $H$ is $M$-redundant if and only if $X\oplus X_1$ is not a direct summand of $M=X\oplus Z$. Since the category $\C$ is Krull-Schimdt, this is equivalent to $X_1$ not being a direct summand of $Z$, which by Proposition \ref{prop:redundancycriterion} is equivalent to $G$ being $Z$-redundant. 
\end{proof}
To summarize, we see that the only non-zero linear relations among the elements $\ol{T_H}$ with $H$ an $F$-tight functor will arise from functors of the form $H_{\alpha}\oplus (0\oplus G)$, where $G\subseteq F$ is a $Z$-redundant semisimple functor. In the next section we will find these linear relations explicitly.

\section{concrete linear relations from $M$-redundant functors and a proof of the main theorem}\label{sec:relations}
Fix $\alpha\in \Autc(F)$ and $G\subseteq F$ such that $G$ is semisimple and $Z$-redundant. Write $H=H_{\alpha}\oplus (0\oplus G)$. We then know that $\ol{L_H}=0$, and that all linear relations among $\ol{T_H}$ for $F$-tight functors $H$ will arise from such relations. In this section we will write $\ol{L_H}$ in terms of $\ol{T_H}$ for $F$-tight functors $H$ explicitly. 

Write $H_1,\ldots H_r$ for the maximal proper subfunctors of $H$. Recall that we have 
$$L_H = T_H - \sum_i T_{H_i}+ \sum_{i_1<i_2} T_{H_1\cap H_2} - \cdots + (-1)^r T_{H_1\cap H_2\cap\cdots\cap H_r}.$$ 
We know that $\ol{T_{H'}}=0$ unless $H'$ is $F$-tight. We thus see that we can write $\ol{L_H} = \sum_{\beta\in \Autc(F)}a_{\beta} \ol{T_{H_{\beta}}}$, where
\begin{equation}\label{eq:abeta} a_{\beta} = \delta_{H_{\beta},H} -\sum_i \delta_{H_{\beta},H_i} + \cdots + (-1)^r \delta_{H_{\beta},H_1\cap\cdots\cap H_r}.\end{equation}
We thus need to understand these coefficients.

\begin{lemma}\label{lem:beta.criterion}
For $\beta\in \Autc(F)$ we have $H_{\beta}\subseteq H$ if and only if $\Im(\alpha-\beta)\subseteq G$. If this happens then $H=H_{\beta}\oplus (0\oplus G)$.
\end{lemma}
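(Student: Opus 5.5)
The plan is to work pointwise, evaluating subfunctors of $F\oplus F$ at each object $W\in\C$. By definition $H=H_\alpha\oplus(0\oplus G)$, so
$$H(W)=\{(x,\alpha(x)+g)\mid x\in F(W),\ g\in G(W)\},$$
where the sum is direct because $H_\alpha\cap(0\oplus F)=0$. Similarly $H_\beta(W)=\{(x,\beta(x))\mid x\in F(W)\}$. I will prove the two implications of the equivalence by comparing these descriptions, and then deduce the splitting.

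Suppose first that $H_\beta\subseteq H$, and fix $x\in F(W)$. Then $(x,\beta(x))\in H(W)$, so $(x,\beta(x))=(y,\alpha(y)+g)$ for some $y\in F(W)$ and $g\in G(W)$. Comparing first coordinates gives $y=x$. The second coordinates then give $(\beta-\alpha)(x)=g\in G(W)$. Since $x$ and $W$ were arbitrary, $\Im(\alpha-\beta)\subseteq G$. Conversely, suppose $\Im(\alpha-\beta)\subseteq G$. Then
$$(x,\beta(x))=(x,\alpha(x))+(0,(\beta-\alpha)(x)),$$
and the first summand lies in $H_\alpha(W)$ while the second lies in $(0\oplus G)(W)$. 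Hence $H_\beta\subseteq H$.

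For the splitting, assume $H_\beta\subseteq H$. The subfunctor $0\oplus G$ is contained in $H$ by definition. Since $\Im(\alpha-\beta)\subseteq G$, the functor $H_\beta+(0\oplus G)$ contains $(x,\beta(x))+(0,(\alpha-\beta)(x))=(x,\alpha(x))$, so it contains $H_\alpha$ and hence all of $H$. The intersection $H_\beta\cap(0\oplus G)$ is zero, because an element $(x,\beta(x))$ with $x=0$ is zero. Therefore $H=H_\beta\oplus(0\oplus G)$ as an internal direct sum.

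There is no real obstacle here; the only care needed is to make sure every identity is checked objectwise and naturally. This holds automatically, since images, sums and intersections of subfunctors in the abelian category $\C_{+1}$ are all computed objectwise.
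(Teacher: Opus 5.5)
Your proof is correct and is essentially the paper's argument: the paper uses the difference of morphisms $(\Id_F,\alpha)-(\Id_F,\beta)=(0,\alpha-\beta)$ together with $H\cap(0\oplus F)=0\oplus G$, which is exactly your objectwise comparison of coordinates. For the splitting, the paper cites the earlier fact that any $H$ containing an $F$-tight subfunctor $H'$ splits as $H'\oplus(H\cap(0\oplus F))$, while you check directly that $H=H_\beta+(0\oplus G)$ with zero intersection; this amounts to the same thing.
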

\begin{proof}
Assume first that $H_{\beta}\subseteq H$. We have $H_{\alpha}\subseteq H$. Since $H_{\alpha} = \Im(1_F,\alpha):F\to F\oplus F$ and $H_{\beta} = \Im(1_F,\beta):F\to F\oplus F$, the image of the difference of these morphisms is also contained in $H$. But the difference is just $(0,\alpha-\beta)$. Since $H\cap (0\oplus F) = 0\oplus G$, we get that $\Im(\alpha-\beta)\subseteq G$ as desired. 

The argument also works in the opposite direction: if $\Im(\alpha-\beta)\subseteq G$ then $\Im(0,\beta-\alpha)\subseteq 0\oplus G\subseteq H$ and $H_{\alpha} = \Im(\Id_F,\alpha)\subseteq H$. The sum of the morphisms is then $(1_F,\beta)$, and its image is $H_{\beta}$. The last claim is immediate, since any functor $H$ that contains an $F$-tight functor $H'$ splits as $H'\oplus (H\cap (0\oplus F))$. 
\end{proof}

Assume now that $H_{\Id_F}\subseteq H$. 
\begin{lemma}
If $H_{\beta},H_{\gamma}\subseteq H$ then $a_{\beta} = a_{\gamma}$. 
\end{lemma}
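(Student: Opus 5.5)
The approach is to find an automorphism of $H$ that carries $H_\beta$ to $H_\gamma$, so that the two alternating sums in Equation \ref{eq:abeta} are matched term by term. Recall that $a_\beta$ counts, with signs, the sets $S\subseteq\{1,\ldots,r\}$ for which $\bigcap_{i\in S}H_i=H_\beta$; the empty intersection is $H$ itself. So it suffices to produce an automorphism $\theta$ of $H$ that sends $H_\beta$ to $H_\gamma$. Any automorphism of $H$ permutes the maximal proper subfunctors $H_1,\ldots,H_r$. It also commutes with intersections. Hence $\theta$ induces a bijection on index sets $S$ that preserves $|S|$, and sends those $S$ with $\bigcap_{i\in S}H_i=H_\beta$ exactly to those with $\bigcap_{i\in S}H_i=H_\gamma$. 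This gives $a_\beta=a_\gamma$.

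To build $\theta$, first use Lemma \ref{lem:beta.criterion}. Since $H_{\Id_F}\subseteq H$, the functor $H$ equals $H_{\Id_F}\oplus(0\oplus G)$. Moreover, $H_\beta\subseteq H$ if and only if $\Im(\Id_F-\beta)\subseteq G$, and similarly for $\gamma$. Consider the automorphism $\Phi=\Id_F\oplus\gamma\beta^{-1}$ of $F\oplus F$. It maps $H_\beta=\Im(1_F,\beta)$ onto $\Im(1_F,\gamma)=H_\gamma$. Next, check that $\Phi$ preserves $H$. An element of $H$ has the form $(x,x+g)$ with $g\in G$. Its image is $(x,\gamma\beta^{-1}x+\gamma\beta^{-1}g)$. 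This lies in $H$ if and only if $\gamma\beta^{-1}x - x+\gamma\beta^{-1}g\in G$, i.e.\ $(\gamma-\beta)\beta^{-1}x+\gamma\beta^{-1}g\in G$.

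The main point is to show that $\gamma\beta^{-1}$ preserves $G$ and that $\Im(\gamma-\beta)\subseteq G$. The second claim is immediate, since $\gamma-\beta=(\gamma-\Id_F)-(\beta-\Id_F)$ and both terms have image in $G$. For the first, any $\delta$ with $\Im(\delta-\Id_F)\subseteq G$ satisfies $\delta(g)=g+(\delta-\Id_F)(g)\in G$ for $g\in G$, so $\delta(G)\subseteq G$. Applying this to $\gamma$ and to $\beta$ gives $\gamma(G)\subseteq G$ and $\beta(G)\subseteq G$. Since $\beta$ is an automorphism and $G(N)$ is finite for each $N$, we get $\beta(G)=G$, hence $\beta^{-1}(G)=G$. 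Therefore $\gamma\beta^{-1}(G)\subseteq G$. These two facts show that $\Phi(H)\subseteq H$. By the same finiteness argument, or by applying the argument to $\Phi^{-1}=\Id_F\oplus\beta\gamma^{-1}$, we get $\Phi(H)=H$.

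Set $\theta=\Phi|_H$. It is an automorphism of $H$ with $\theta(H_\beta)=H_\gamma$, and the first paragraph then gives $a_\beta=a_\gamma$. I expect no step to be a serious obstacle. The only care needed is in confirming that $\Phi$ really restricts to an automorphism of $H$, which is handled by the inclusions above. The set $P_G=\{\phi\mid \Im(\phi-\Id_F)\subseteq G\}$ is presumably a group for exactly the same reason, and we do not need that fact here.
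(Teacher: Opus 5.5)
Your proof is correct and is essentially the paper's argument: the paper reduces to $\gamma=\Id_F$ and uses $\Id_F\oplus\beta$, which stabilizes $H=H_{\Id_F}\oplus(0\oplus G)$ and sends $H_{\Id_F}$ to $H_\beta$, hence permutes the maximal subfunctors $H_i$ and matches the two alternating sums term by term. You do the same thing directly with $\Id_F\oplus\gamma\beta^{-1}$, and you also spell out, via finiteness, the fact $\beta(G)=G$ that the paper uses without comment.
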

\begin{proof}
It will be enough to treat the case $\gamma = \Id_F$. 
By the last lemma it holds that $H=H_{\Id_F}\oplus (0\oplus G) = H_{\beta}\oplus (0\oplus G)$.
Also, it holds that $\Im(\Id_F-\beta)\subseteq G$, and so $G$ is stable under the action of $\beta$. 
Write $\wt{\beta}= (\Id_F,\beta)$. We have $$\wt{\beta}(H) = \wt{\beta}(H_{1_F})\oplus \wt{\beta}(0\oplus G) = \wt{\beta}(H_{1_F})\oplus (0\oplus \beta(G)) = H_{\beta}\oplus (0\oplus G)= H.$$
This means that $\wt{\beta}$ stabilizes the functor $H$. 
The automorphism $\wt{\beta}$ thus permutes the maximal proper subfunctors $H_i$ of $H$. The coefficient $a_{\Id_F}$ of $T_{H_{\Id_F}}$ in the alternating sum  
$$T_H - \sum_i T_{H_i} + \sum_{i_1<i_2} T_{H_{i_1}\cap H_{i_2}} + \cdots +(-1)^r T_{H_1\cap\cdots\cap H_r}$$ is equal to the coefficient $a_{\beta}$ of $T_{H_{\beta}}$ in the alternating sum $$T_{\wt{\beta}(H)} - \sum_i T_{\wt{\beta}(H_i)} + \sum_{i_1<i_2} T_{\wt{\beta}(H_{i_1})\cap\wt{\beta}(H_{i_2})} + \cdots + (-1)^r T_{\wt{\beta}(H_1)\cap\cdots\cap \wt{\beta}(H_r)}.$$ But this is the same sum, so we get that $a_{\beta} = a_{1_F}$ indeed. 
\end{proof}
We thus see that all the coefficients $a_{\beta}$ are equal. The more difficult part is to prove that they are also non-zero. This will be done by analysing in detail the lattice of submodules of $H/H_{\Id_F}$. 
We begin by introducing some auxiliary algebras.
\begin{definition}
Let $X$ be an object in an abelian category. Assume that $X$ has only finitely many subobjects. We write $$B_X=\bigoplus_{Y\subseteq X} K\chi_Y.$$ We define a commutative algebra structure on $B_X$ by $$\chi_{Y_1}\cdot \chi_{Y_2} = \chi_{Y_1\cap Y_2}.$$ The element $\chi_X$ is the unit of the algebra $B_X$. 
\end{definition}
We use now the expression for $L_H$ from Equation \ref{eq:LH}. We get that the coefficient $a_{\Id_F}$ is equal to the coefficient of $\chi_{H_{\Id_F}}$ in the alternating sum 
$$\chi_H-\sum_i \chi_{H_i} + \sum_{i_1<i_2}\chi_{H_1\cap H_2} - \cdots + (-1)^r\chi_{H_1\cap\cdots\cap H_r} = \prod_i (1-\chi_{H_i}),$$ where $H_1,\ldots, H_r$ are the maximal subfunctors of $H$, and the last equation takes place in the algebra $B_{H}$. 
The proof of the next lemma is straightforward.
\begin{lemma}\label{lem:BX}
\begin{enumerate}
\item The algebra $B_X$ depends only on the isomorphism type of the lattice of submodules of $X$. 
\item If $Y\subseteq X$ then we have an inclusion of algebras $$B_{X/Y}\to B_X$$ that sends $\chi_Z$ to $\chi_{p^{-1}(Z)}$, where $p:X\to X/Y$ is the canonical projection. The image contains exactly the elements $\chi_Z$ with $Y\subseteq Z$. 
\item If $G\subseteq \Aut(X)$ then $G$ acts on $B_X$ by the formula $g\cdot \chi_Y = \chi_{g(Y)}$. 
\end{enumerate}
\end{lemma}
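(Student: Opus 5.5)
The plan is to check the three items one at a time directly from the definition of $B_X$. The only real content is that the relevant maps are lattice homomorphisms for intersection, and that bijections preserve linear independence of the basis $\{\chi_Y\}$.

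\emph{Item (1).} Suppose $\sigma:\mathcal L(X)\to\mathcal L(X')$ is an isomorphism of lattices of subobjects. Define a linear map $B_X\to B_{X'}$ by $\chi_Y\mapsto\chi_{\sigma(Y)}$. It is bijective on bases. It is multiplicative because a lattice isomorphism preserves meets: $\sigma(Y_1\cap Y_2)=\sigma(Y_1)\cap\sigma(Y_2)$, since the intersection is the meet in the subobject lattice. It is unital because $\sigma(X)=X'$, as $X$ is the top element. Hence $B_X$ depends only on the lattice, with the multiplication being the meet.

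\emph{Item (2).} By the correspondence theorem in an abelian category, $Z\mapsto p^{-1}(Z)$ is a lattice isomorphism from the subobjects of $X/Y$ onto the interval $\{W: Y\subseteq W\subseteq X\}$. This map preserves intersections, since $p^{-1}(Z_1\cap Z_2)=p^{-1}(Z_1)\cap p^{-1}(Z_2)$ (pullback commutes with intersection). It also sends $X/Y$ to $X$, so the induced linear map is a unital algebra map. Injectivity follows because distinct $Z$ give distinct $p^{-1}(Z)$, so distinct basis vectors go to distinct basis vectors. The image is spanned by the $\chi_W$ with $Y\subseteq W$, which is exactly the interval.

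\emph{Item (3).} An automorphism $g$ permutes the subobjects of $X$ and satisfies $g(Y_1\cap Y_2)=g(Y_1)\cap g(Y_2)$ and $g(X)=X$. By the argument of (1), each $g$ therefore acts by an algebra automorphism. The action axioms follow from $(gh)(Y)=g(h(Y))$ and $\mathrm{id}(Y)=Y$.

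There is no genuine obstacle here. The only point needing care is that, in the ambient abelian category, intersection and inverse image of subobjects are well defined and compatible. This is standard, since both are pullbacks and pullbacks commute with each other.
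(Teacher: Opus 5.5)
Your proof is correct and is the routine check the paper has in mind; the paper gives no written argument and simply calls the lemma straightforward. In each item the relevant bijection of subobjects (a lattice isomorphism, $Z\mapsto p^{-1}(Z)$, or $Y\mapsto g(Y)$) preserves intersections and the top element, so it induces a unital algebra map on the basis $\{\chi_Y\}$, exactly as you argue.
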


\begin{proposition}
Assume that $H=H_{\Id_F}\oplus (0\oplus G)$, where $G\subseteq F$ is semisimple and $Z$-redundant. Then the coefficient $a_{\Id_F}$ in Equation \ref{eq:abeta} is non-zero. 
\end{proposition}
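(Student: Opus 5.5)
The plan is to identify $a_{\Id_F}$ with a value of the Möbius function of the lattice of subfunctors of $G$, and then to evaluate that Möbius function explicitly using the fact that $G$ is semisimple.

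\emph{Step 1: rewrite $a_{\Id_F}$ as a sum over intersections equal to $H_{\Id_F}$.} Expanding the product $\prod_i(1-\chi_{H_i})$ in $B_H$ gives
$$a_{\Id_F}=\sum_{S\subseteq\{1,\ldots,r\},\ \bigcap_{i\in S}H_i=H_{\Id_F}}(-1)^{|S|},$$
with the empty intersection read as $H$. If $\bigcap_{i\in S}H_i=H_{\Id_F}$, then every $H_i$ with $i\in S$ contains $H_{\Id_F}$. So only the maximal proper subfunctors of $H$ containing $H_{\Id_F}$ contribute.

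\emph{Step 2: pass to the quotient.} By Lemma \ref{lem:BX}(2), the subfunctors of $H$ containing $H_{\Id_F}$ correspond to the subfunctors of $H/H_{\Id_F}\cong G$. This correspondence preserves intersections and sends maximal subfunctors to maximal subfunctors. Hence
$$a_{\Id_F}=\sum_{S}(-1)^{|S|},$$
where $S$ now ranges over sets of maximal proper subfunctors of $G$ whose intersection is $0$. By the cross-cut theorem of Rota, applied with the cross-cut of coatoms, this number equals the Möbius function $\mu_{L(G)}(0,G)$ of the subfunctor lattice $L(G)$ of $G$. The same conclusion can be reached directly: the coefficient of $\chi_0$ in $\prod(1-\chi_{G_i})$ inside $B_G$ is this inclusion–exclusion count.

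\emph{Step 3: compute the Möbius function.} Since $G$ is semisimple, write $G\cong\bigoplus_j S_j^{n_j}$ with the $S_j$ pairwise non-isomorphic simple functors. Each $\End(S_j)$ is a finite division ring, hence a field $\F_{q_j}$. The lattice $L(G)$ is then the product of the lattices of $\F_{q_j}$-subspaces of $\F_{q_j}^{n_j}$. Möbius functions are multiplicative over products of lattices, and for the subspace lattice of an $n$-dimensional space over $\F_q$ one has $\mu(0,V)=(-1)^n q^{\binom n2}$. Therefore
$$a_{\Id_F}=\prod_j(-1)^{n_j}q_j^{\binom{n_j}{2}}\neq 0,$$
and in the degenerate case $G=0$ we get $a_{\Id_F}=1$. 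Note that the $Z$-redundancy hypothesis is not used in this computation; it only matters for knowing that $\ol{L_H}=0$.

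\emph{Main obstacle.} The delicate point is Step 3, specifically the structural claim that $L(G)$ splits as a product of subspace lattices. This requires checking that every subfunctor of $\bigoplus_j S_j^{n_j}$ decomposes along isotypic components, which holds because there are no nonzero maps between non-isomorphic simples. One also needs that the subfunctors of $S^n$ correspond to $\End(S)$-subspaces of $\End(S)^n$, via $\Hom(S,-)$. Alternatively, one can avoid Rota's theorem by computing $\prod(1-\chi_{G_i})$ directly in $B_G$ for the subspace lattice: an induction on $n$ shows that the coefficient of $\chi_0$ satisfies the standard recursion for the Möbius function. This is the computation I expect to take the most care.
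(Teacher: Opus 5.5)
Your proposal is correct. Its skeleton is the paper's: you keep only the maximal subfunctors containing $H_{\Id_F}$, pass to $B_{H/H_{\Id_F}}\cong B_G$, split $G$ into isotypic components, and find the value $(-1)^m q^{\binom m2}$ on the lattice of subspaces of $\F_q^m$. The difference lies in how the last two steps are done.

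You identify the coefficient with $\mu_{L(G)}(0,G)$ via the cross-cut theorem. You then import two standard facts: multiplicativity of $\mu$ over products of lattices, and the Möbius function of subspace lattices.

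The paper never mentions Möbius functions. It stays inside the algebras $B_X$ and re-derives exactly these facts by hand:
\begin{enumerate}
\item The tensor decomposition $B_N\cong\bigotimes_j B_{e_jN}$ plays the role of multiplicativity.
\item The $\GL_b(\F_q)$-orbit argument, the interval reduction $a_{m,b}=a_{0,b-m}$, and the augmentation identity $\sum_m a_m\binom{b}{m}_q=0$ (precisely the defining recursion of $\mu$), together with the $q$-binomial theorem, recompute $\mu(0,\F_q^b)$.
\end{enumerate}

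The paper's route is self-contained apart from the $q$-binomial theorem. Yours is shorter and more conceptual, and it explains more. Applying the cross-cut theorem to each interval $[H',H]$ shows that the collected coefficients in Equation \ref{eq:LH} are exactly $\mu(H',H)$. So Equation \ref{eq:LH} is just the Möbius inversion of Corollary \ref{cor:TL}. Since $H=H_\beta\oplus(0\oplus G)$ whenever $H_\beta\subseteq H$ (Lemma \ref{lem:beta.criterion}), this gives $a_\beta=\mu(H_\beta,H)=\mu_{L(G)}(0,G)$ for all such $\beta$. That recovers the preceding lemma (equality of the $a_\beta$) for free.
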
\begin{proof}
By the discussion preceeding Lemma \ref{lem:BX}, we need to show that the coefficient of $\chi_{H_{\Id_F}}$ in the expression $\prod_i(1-\chi_{H_i})$ is non-zero. By the second part of Lemma \ref{lem:BX}, it is enough to show that the coefficient of $\chi_0$ in the expression $\prod_i(1-\chi_{H_i/H_{\Id_F}})$ is non-zero in $B_{H/H_{\Id_F}}\cong B_G$.  Recall that the functor $G$ is semisimple. 
Write $$G = \bigoplus_{j=1}^k S_j^{b_j},$$ where $S_j$ are non-isomorphic simple functors. Write $F_j = \End(S_j)$. Since the $S_j$'s are simple, $F_j$ is a field for every $j$. Define a commutative algebra $$A= \bigoplus_j F_je_j,$$ where the $e_j$ are mutually orthogonal idempotent. We have a natural structure of on $A$-module on $N=\bigoplus_j F_j^{b_j}$. Moreover, $\End_A(N)\cong \End_{\C_{+1}}(G)\cong \bigoplus_j \M_{b_j}(F_j)$, and there is a one-to-one order preserving correspondence between $A$-submodules of $N$ and subfunctors of $G$. Indeed, every $A$-submodule of $N$ is of the form $eN$ for some idempotent $e\in \End_A(N)$. The corresponding subfunctor is given by $eG$, where we freely use the identification $\End_A(N)\cong \End_{\C_{+1}}(G)$. 
By the first part of Lemma \ref{lem:BX}, it will be enough to prove that the coefficient of $\chi_0$ in the expression $\prod_i(1-\chi_{N_i})$ is non-zero, where $\{N_i\}$ is the set of all maximal propers $A$-submodules of $N$. 

The orthogonal idempotents $e_j$ in $A$ give rise to a tensor product decomposition \begin{equation}\label{eq:tensorproduct} B_N \cong \bigotimes_j B_{e_jN}.\end{equation} The isomorphism $\bigotimes_j B_{e_jN}\to B_N$ is given by the product of the maps $B_{e_jN}\to B_N$ described in Part (2) of Lemma \ref{lem:BX} followed by multiplication. If $N'\subseteq N$ is a maximal submodule, then $N/N'$ must be simple, and is therefore isomorphic to $S_j$ for some $j$. It follows that $N'$ contains $e_{j'}N$ for all $j'\neq j$, and is in fact the inverse image of a maximal submodule of $e_jN$ under the projection $N\to e_jN$. 
It then holds that under the isomorphism \ref{eq:tensorproduct} the product $\prod_i(1-\chi_{N_i})$ corresponds to $\prod_j\prod_i(1-\chi_{N_{ji}}),$ where $N_{ji}$ are the maximal submodules of $e_jN$. It will thus be enough to prove that the coefficient of $\chi_0$ is non-zero in the product $\prod_i(1-\chi_{N_{ji}})$ for a specific $j$. In other words, we can assume that the number of isomorphism types of direct summands that appear in $G$ is 1. This means that the algebra $A$ is just the field $\F_q$, and $N$ is just $\F_q^b$ for some $b\in\N$. Submodules are now just $\F_q$-vector subspaces of $\F_q^b$.

So we reduce to the following case: we consider the element $$P_b=\prod_{\substack{V\subseteq \F_q^b \\ \dim(V) = b-1}}(1-\chi_V)$$ in the algebra $B_{\F_q^b}$. We would like to prove that the coefficient of $\chi_0$ in $P_b$ is non-zero. Write $P_b = \sum_{V\subseteq \F_q^b}a_V \chi_V$. 
We first claim that $a_V$ only depends on the dimension of $V$. This follows easily from the fact that the group $\GL_b(\F_q)$ acts transitively on vector subspaces of $\F_q^b$ of a given dimension and stabilizes $P_b$. 
So we can write 
\begin{equation}\label{eq:Pb}P_b = \sum_{m=0}^b a_{m,b}\sum_{\substack{V\subseteq \F_q^b \\ \dim(V) = m}}\chi_V.\end{equation}

Next, we claim that $a_{m,b} = a_{0,b-m}$. To see this, we fix a vector space $W\subseteq \F_q^b$ of dimension $m$. We then write 
$$P_b = \prod_{V\in X_1}(1-\chi_V)\cdot \prod_{V\in X_2}(1-\chi_V),$$ where $X_1 = \{V| W\subseteq V\subseteq \F_q^b, \dim(V) = b-1\}$ and $X_2 =  \{V| W\not \subseteq V\subseteq \F_q^b, \dim(V) = b-1\}$. The coefficient of $\chi_W$ in $P$ will then be the coefficient of $\chi_W$ in $\prod_{V\in X_1}(1-\chi_V)$. But this is exactly the image of $P_{m-b}$ in $B_{\F_q^b}$ under the map $B_{\F_q^{m-b}} = B_{\F_q^m/W}\to B_{\F_q^m}$. This implies that $a_{m,b} = a_{0,m-b}$ indeed. We will write $a_m = a_{0,m}$ henceforth. 

By considering the cases $m=0$ and $m=1$ we find that $a_0=1$ and $a_1=-1$. Recall that the number of $m$-dimensional subspaces of $\F_q^b$ is given by the $q$-binomial coefficient
$$\frac{(q^b-1)(q^b-q)\cdots (q^b-q^{m-1})}{(q^m-1)(q^m-q)\cdots (q^m-q^{m-1})} = \binom{b}{m}_q.$$
Apply the ring homomorphism $B_{\F_q^b}\to K$ that sends all $\chi_V$ to 1 for every $V\subseteq \F_q^b$ to Equation \ref{eq:Pb}. We get
$$\sum_{m=0}^b a_m \binom{b}{b-m}_q = \sum_{m=0}^b a_m \binom{b}{m}_q = 0,$$ which gives a linear recursion for the numbers $a_m$ (we have used here the fact that $\binom{b}{m}_q = \binom{b}{b-m}_q$). To actually find the $a_m$'s we use now the $q$-binomial formula, which states that 
$$\prod_{k=0}^{b-1}(1+q^kt) = \sum_{k=0}^b q^{k(k-1)/2}\binom{b}{k}_qt^k.$$ The substitution $t=-1$ gives 
$$0= \sum_{k=0}^b q^{k(k-1)/2}\binom{b}{k}_q(-1)^k.$$ This shows that $a_m = q^{m(m-1)/2}(-1)^m$ satisfies the above linear recursion. It follows that $a_m\neq 0$ as desired.
\end{proof}
We now deduce some corollaries. 
\begin{definition} For a subfunctor $G\subseteq F$ we define the subgroup $$P_G = \{\phi\in \Autc(F) | \Im(\phi-\Id_F)\subseteq G\}.$$
\end{definition}
\begin{remark} If $F=G\oplus G'$ then we can think of $P_G$ as the group of all automorphisms that have the block diagonal form 
$$\begin{pmatrix} * & * \\ 0 & I \end{pmatrix}.$$ This is the reason we chose the terminology $P_G$, as it is related to parabolic subgroups.
\end{remark} 
Now, we have see that if $H=H_{\Id_F}\oplus (0\oplus G)$ is an $M$-redundant functor with $G$ semisimple, then up to the non-zero scalar $a_{\Id_F}$ it holds that 
$$\ol{L_H} = \sum_{\substack{\beta\in \Autc(F) \\ H_{\beta}\subseteq H}}\ol{T_{H_{\beta}}}.$$ By Lemma \ref{lem:beta.criterion} we have 
$$\ol{L_H} = \sum_{\beta\in P_G}\ol{T_{H_{\beta}}}.$$ 
If $H = H_{\alpha}\oplus (0,G)$ is an $M$-redundant functor with $G$-semisimple and $Z$-redundant functor, and $\alpha\in \Autc(F)$, then we can apply the automorphism $(\alpha,1)$ that sends $H_{\alpha}$ to $H_{\Id_F}$ to reduce to the case where $\alpha=\Id_F$. Indeed, in this case we will get that
$$\ol{L_H} = \sum_{\beta\in P_G}\ol{T_{H_{\beta}}T_{H_{\alpha}}}.$$ 
Notice that it holds that for $g\in \Autc(F)$ we have $gP_Gg^{-1} = P_{g(G)}$. 
We summarize all of this in the following:
\begin{proposition}
Assume that $M = X\oplus Z$ and that $F$ has a minimal presentation $(X\stackrel{f}{\to}Y)$. Consider the homomorphism $$\Phi:K\Autc(F)\to \End_{\Aut_R(M)}(\ol{KF(M)})$$ $$\alpha\mapsto T_{H_{\alpha}}.$$ Then the kernel of $\Phi$ is the two-sided ideal generated by the idempotents $e_{P_G}$, where $G\subseteq F$ is a semisimple $Z$-redundant subfunctor.
\end{proposition}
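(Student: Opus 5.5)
We identify the algebra $\End_{\Aut_R(M)}(KF(M))$ with the span of the $T_H$, $H\subseteq F\oplus F$, and use the summary proposition of Section \ref{sec:lin.rel}. The restriction map $\Theta_F(M)$ is surjective onto $\End_{\Aut_R(M)}(\ol{KF(M)})$, and every $\ol{T_H}$ with $H$ not $F$-tight vanishes. Hence $\Phi$ is surjective; this is also Theorem B. The kernel of $\Phi$ is then the image in $K\Autc(F)$ of the relations $\ol{L_H}=0$ for $M$-redundant $H$, written in the basis $\{\ol{T_{H_\beta}}\}$. More precisely, let $W$ be the span of all $T_H$ with $H$ not $F$-tight, and let $\pi:\End(KF(M))\to K\Autc(F)$ be the linear map sending $T_{H_\alpha}\mapsto\alpha$ and killing $W$. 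Then $\Ker\Phi=\pi(\Ker\Theta_F(M)+\mathrm{span}\{L_H: H \text{ $M$-redundant}\})$. So the plan is to show that $\Ker\Phi=\mathrm{span}\{\pi(L_H): H\ M\text{-redundant}\}$, and then to compute these elements.

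\textbf{Step 1: reduce to a single family of relations.} By Equation \ref{eq:LH2}, $\pi(L_H)=\sum_\beta a_\beta\beta$. This is nonzero only if some $H_\beta$ appears among the intersections of maximal subfunctors of $H$, i.e. only if $H\supseteq H_\alpha$ with $H/H_\alpha$ semisimple. By the splitting lemma, $H=H_\alpha\oplus(0\oplus G)$ with $G\subseteq F$ semisimple. By the redundancy lemma, $H$ is $M$-redundant exactly when $G$ is $Z$-redundant.

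\textbf{Step 2: compute $\pi(L_H)$.} By Lemma \ref{lem:beta.criterion}, the lemma on equal coefficients, and the preceding proposition, we get $\pi(L_H)=a\sum_{\Im(\beta-\alpha)\subseteq G}\beta$ with $a\neq0$. This sum equals $\sum_{\gamma\in P_G}\gamma\alpha$ or $\alpha\sum_{\gamma\in P_{\alpha^{-1}G}}\gamma$, depending on the side convention. I need to double check the side. Writing $\beta=\alpha\gamma$ gives $\Im(\beta-\alpha)=\alpha(\Im(\gamma-\Id))$, so $\gamma\in P_{\alpha^{-1}(G)}$. Since $\alpha^{-1}(G)\cong G$ is again semisimple and $Z$-redundant, and $Z$-redundancy depends only on the isomorphism class by Proposition \ref{prop:redundancycriterion}, the span of these elements is
\[
\mathrm{span}\{\alpha\, e_{P_{G'}} : \alpha\in\Autc(F),\ G'\subseteq F \text{ semisimple and $Z$-redundant}\},
\]
up to the nonzero scalars $|P_{G'}|$. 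The family of such $G'$ is stable under $\Autc(F)$, and $g e_{P_{G'}}g^{-1}=e_{P_{g(G')}}$. Therefore this span is also closed under right multiplication, so it is exactly the two-sided ideal generated by the $e_{P_G}$.

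\textbf{Main obstacle.} The delicate point is justifying $\Ker\Phi=\pi(\text{span of redundant }L_H)$, i.e. that $\Ker\Theta_F(M)$ contributes nothing beyond $W$. This requires $\Ker\Theta_F(M)$ to be spanned by the non-tight $T_H$, which is the lemma quoted from Proposition 3.3 of \cite{CMO4}. We must also check that the relations $L_H=0$ are all the relations among the $T_H$ in $\End(KF(M))$; this is Proposition \ref{prop:LHred} applied to $F\oplus F$ via Equation \ref{eq:hom.isos}. Combining the two, a combination $\sum c_\alpha\alpha$ lies in $\Ker\Phi$ if and only if $\sum c_\alpha T_{H_\alpha}\in W+\mathrm{span}\{L_H\}$, and applying $\pi$ gives the claim. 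I will state this linear-algebra step carefully, since $W$ and the span of the $L_H$ overlap.
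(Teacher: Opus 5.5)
Your proposal is correct and follows the paper's own route. Only the relations $\ol{L_H}=0$ with $H=H_\alpha\oplus(0\oplus G)$, for $G$ semisimple and $Z$-redundant, involve $F$-tight terms. Their coefficients are equal and nonzero, and the resulting sum runs over the coset $P_G\alpha=\alpha P_{\alpha^{-1}(G)}$, so your two forms agree and there is no side ambiguity. Finally, $gP_Gg^{-1}=P_{g(G)}$ upgrades the span to the two-sided ideal.

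The one slip is that $\pi$ is not well defined on $\End_{\Aut_R(M)}(KF(M))$ itself, since there $L_H=0$ while you want $\pi(L_H)\neq 0$. Define it instead on the free vector space on symbols $[H]$, $H\subseteq F\oplus F$, reading $L_H$ as the formal alternating sums. Then $\Ker\Phi=K\Autc(F)\cap(W+\mathrm{span}\{L_H\})=\pi(\mathrm{span}\{L_H\})$, which is exactly the linear algebra you want and which the paper states only informally.
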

Finally, we can state the main theorem:
\begin{theorem}\label{thm:main}
 Assume that $M = X\oplus Z$ and that $F$ has a minimal presentation $(X\stackrel{f}{\to}Y)$. An irreducible representation $V$ of $\Autc(F)$ is the functor morphing of an irreducible representation of $\Aut_R(M)$ if and only if the following condition holds: whenever $G\subseteq F$ is a semisimple subfunctor with a projective cover $\Hom(Z_G,-)$ such that $Z_G$ is not a direct summand of $Z$ we have $\Hom_{P_G}(\one,V)=0$. 
\end{theorem}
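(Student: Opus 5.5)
The plan is to combine the preceding proposition, which describes $\ker\Phi$, with the description of functor morphing through the surjection $\Phi_F(M)$. By Theorem B, $\Phi:K\Autc(F)\to\End_{\Aut_R(M)}(\ol{KF(M)})$ is surjective. Hence $\End_{\Aut_R(M)}(\ol{KF(M)})\cong K\Autc(F)/I$, where $I$ is the two-sided ideal generated by the idempotents $e_{P_G}=\frac{1}{|P_G|}\sum_{\beta\in P_G}\beta$, with $G$ ranging over the semisimple $Z$-redundant subfunctors of $F$.

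Write $\ol{KF(M)}=\bigoplus_j U_j\ot V_j$. Since $K\Autc(F)$ is semisimple, its quotient by $I$ is the product of the matrix blocks $\End(U)$ over those irreducible $\Autc(F)$-representations $U$ on which $I$ acts by zero. The irreducible $U$ with $IU=0$ are therefore exactly the $U_j$ appearing in $\ol{KF(M)}$, because $\End_{\Aut_R(M)}(\ol{KF(M)})=\prod_j\End(U_j)$. By definition, the functor morphings are the $U_j^*$. So $V$ is a functor morphing precisely when $I$ annihilates $V^*$.

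Next I would show that $I$ annihilates an irreducible $W$ if and only if $e_{P_G}W=0$ for every generator $e_{P_G}$. The image of $e_{P_G}$ on $W$ is $W^{P_G}\cong\Hom_{P_G}(\one,W)$. Moreover, the two-sided ideal generated by $e_{P_G}$ kills $W$ exactly when $e_{P_G}$ does: for the ideal, $ae_{P_G}b$ acts by zero for all $a,b$ if and only if $e_{P_G}$ acts by zero. Hence the condition is $\Hom_{P_G}(\one,W)=0$ for all semisimple $Z$-redundant $G$. Since $\dim\Hom_{P_G}(\one,V^*)=\dim\Hom_{P_G}(\one,V)$ (the trivial character is real), the conditions for $V^*$ and for $V$ coincide.

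It remains to translate ``$Z$-redundant'' into the statement's wording. A semisimple $G$ has a minimal presentation $(Z_G\to Y_G)$ whose first term is given by its projective cover $\Hom(Z_G,-)$. By Proposition \ref{prop:redundancycriterion}, $G$ is $Z$-redundant if and only if $Z_G$ is not a direct summand of $Z$. The case $G=0$ needs a check: $Z_0=0$ is a summand of $Z$, so $G=0$ is excluded from the condition, which is consistent since $P_0$ is trivial and would otherwise kill everything.

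I expect the main obstacle to lie upstream, in the previous proposition: verifying that $\ker\Phi$ is \emph{exactly} the ideal generated by the $e_{P_G}$. The earlier analysis shows that the only relations come from $\ol{L_H}=0$ with $H=H_\alpha\oplus(0\oplus G)$, each a nonzero multiple of $e_{P_G}\alpha$. In the present proof I would take this as given. I would also double-check that the convention with duals $U_j^*$, and left versus right multiplication by $\alpha$, does not change the annihilation condition.
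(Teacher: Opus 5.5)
Your proposal is correct and is essentially the paper's proof. The paper also takes the preceding proposition (that $\ker\Phi$ is the two-sided ideal generated by the $e_{P_G}$ for semisimple $Z$-redundant $G$) as given. It then uses two facts: in the semisimple algebra $K\Autc(F)$ this ideal contains exactly the blocks $\End(V)$ with $e_{P_G}V\neq 0$, and $e_{P_G}V=0\iff\Hom_{P_G}(\one,V)=0$. The paper leaves your extra bookkeeping implicit, and it is accurate: surjectivity via Theorem B, the passage between $U_j$ and $U_j^*$, and the translation of $Z$-redundancy into ``$Z_G$ is not a direct summand of $Z$'' via Proposition \ref{prop:redundancycriterion}.
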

\begin{proof}
In a semisimple algebra $\bigoplus\End(V_i)$, the two-sided ideal generated by an idempotent $e$ contains exactly the direct sum of those $\End(V_i)$ such that $e\cdot V_i\neq 0$. But for an irreducible representation $V$ of $\Autc(F)$, $e_{P_G}\cdot V=0$ if and only if $\Hom_{P_G}(\one,V)=0$.
\end{proof}

\begin{corollary}
Let $M=X\oplus Z$ and let $(X\xrightarrow{f} Y)$ be a minimal resolution for $F$. Then the following conditions are equivalent:
\begin{enumerate}
\item Every irreducible representation of $\Autc(F)$ is the functor morphing of an irreducible representation of $\Aut_R(M)$. 
\item The trivial representation $\one$ of $\Autc(F)$ is the functor morphing of some irreducible representation of $\Aut_R(M)$.
\item If $\Hom_R(X_1,-)$ is the projective cover of $soc(F)$ then $X_1$ is a direct summand of $Z$. 
\end{enumerate}
\end{corollary}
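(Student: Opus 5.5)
Here is the plan: deduce all three equivalences from Theorem \ref{thm:main}, using how the subgroups $P_G$ behave for semisimple $G$. Two observations about the subgroups carry the argument. First, if $G\subseteq G'$ then $P_G\subseteq P_{G'}$, since $\Im(\phi-\Id_F)\subseteq G$ forces $\Im(\phi-\Id_F)\subseteq G'$. Second, every semisimple subfunctor of $F$ lies in $soc(F)$. So for semisimple $G$ we always have $P_G\subseteq P_{soc(F)}$. Consequently, if $\Hom_{P_G}(\one,V)\neq 0$, meaning $V$ has a nonzero $P_G$-fixed vector, this alone does not give $P_{soc(F)}$-fixed vectors; the implications have to run the other way, as described next.

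(1)$\Rightarrow$(2) is trivial. For (2)$\Rightarrow$(3), apply Theorem \ref{thm:main} to $V=\one$. For every subgroup, $\Hom_{P_G}(\one,\one)\neq 0$. So the criterion forces that no semisimple subfunctor $G$ has $Z_G$ failing to be a direct summand of $Z$. Taking $G=soc(F)$, which is semisimple, gives $Z_{soc(F)}=X_1$, and $X_1$ is a direct summand of $Z$. This is (3).

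For (3)$\Rightarrow$(1), I will show that under (3) the condition in Theorem \ref{thm:main} is vacuous. Every semisimple $G\subseteq F$ satisfies $G\subseteq soc(F)$, and since $soc(F)$ is semisimple, $G$ is a direct summand of $soc(F)$. The projective cover of a direct sum is the direct sum of the projective covers, so $\Hom_R(X_1,-)\cong\Hom_R(Z_G,-)\oplus\Hom_R(Z_{G''},-)$, where $soc(F)=G\oplus G''$. By Yoneda, $X_1\cong Z_G\oplus Z_{G''}$. Thus $Z_G$ is a direct summand of $X_1$, hence of $Z$ by (3), using the Krull--Schmidt property of $\C$. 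Therefore no semisimple $G$ has $Z_G$ outside the summands of $Z$. The condition of Theorem \ref{thm:main} then holds for every irreducible $V$, and every irreducible representation of $\Autc(F)$ arises from functor morphing.

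The only point needing care is the identification of projective covers of direct sums in $\C_{+1}$: that $\Hom_R(Z_G,-)\to G$ is the projective cover, and that covers are additive and unique up to isomorphism. I would cite Appendix A of \cite{CMO4} for this, as already used in the proof that $H$ is $M$-redundant if and only if $G$ is $Z$-redundant. Everything else is bookkeeping with Theorem \ref{thm:main}.
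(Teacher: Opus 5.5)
Your proof is correct and follows the paper's argument. (1)$\Rightarrow$(2) is trivial. (2)$\Rightarrow$(3) applies Theorem \ref{thm:main} to $V=\one$, where $\Hom_{P_G}(\one,\one)\neq 0$ always, and takes $G=soc(F)$. (3)$\Rightarrow$(1) observes that the theorem's condition becomes vacuous. You also make explicit why it is vacuous, which the paper only calls trivial: every semisimple $G$ is a direct summand of $soc(F)$, so $Z_G$ is a summand of $X_1$ and hence of $Z$. Your opening remark that $P_G\subseteq P_{soc(F)}$ is never actually used.
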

\begin{proof}
It is clear that 1 implies 2. If there is a semisimple functor $G\subseteq F$ with a projective cover $\Hom_R(Z_G,-)$ such that $Z_G$ is not a direct summand of $Z$, then the theorem shows that $\one$ is not the functor morphing of an irreducible representation of $\Aut_R(M)$. So if $\one$ is the functor morphing of a representation of $\Aut_R(M)$, this cannot happen, and so every module $X_2$ that appears in the projective cover of any semisimple subfunctor $G\subseteq F$ must be a direct summand of $Z$. This is true in particular for $G=soc(F)$, which shows that 2 implies 3.
Finally, if 3 holds then the conditions of the theorem hold trivially, and so every irreducible representation of $\Autc(F)$ is the functor morphing of an irreducible representation of $\Aut_R(M)$. 
\end{proof}
We record the following lemma, which will be useful to analyse some of the examples in the next section.
\begin{lemma}\label{lem:functor.inclusion}
If $G_1\subseteq G_2$ are semisimple functors then $P_{G_1}\subseteq P_{G_2}$. As a result, when applying Theorem \ref{thm:main} we only need to consider the minimal functors (with respect to inclusion) in $\{G| Z_G\text{ is not a direct summand of } Z\}$.\end{lemma}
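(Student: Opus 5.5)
The first claim is immediate from the definition of $P_G$. Let $\phi\in P_{G_1}$, so $\Im(\phi-\Id_F)\subseteq G_1$. Since $G_1\subseteq G_2$, also $\Im(\phi-\Id_F)\subseteq G_2$, which means $\phi\in P_{G_2}$. Semisimplicity plays no role in this step; the inclusion $P_{G_1}\subseteq P_{G_2}$ holds for arbitrary subfunctors $G_1\subseteq G_2$ of $F$.

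For the consequence we use the elementary fact that restricting invariants to a subgroup can only enlarge them. If $P_{G_1}\subseteq P_{G_2}$, then
$$\Hom_{P_{G_2}}(\one,V)=V^{P_{G_2}}\subseteq V^{P_{G_1}}=\Hom_{P_{G_1}}(\one,V).$$
So the vanishing of $\Hom_{P_{G_1}}(\one,V)$ forces the vanishing of $\Hom_{P_{G_2}}(\one,V)$. Now let $\mathcal{S}$ be the set of semisimple subfunctors $G\subseteq F$ with $Z_G$ not a direct summand of $Z$. Since $F$ has only finitely many subfunctors, every $G_2\in\mathcal{S}$ contains some $G_1\in\mathcal{S}$ that is minimal with respect to inclusion. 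Any subfunctor of a semisimple functor is again semisimple, so $G_1$ qualifies automatically. By the invariant inclusion above, the condition of Theorem \ref{thm:main} for $G_2$ follows from the condition for $G_1$. Hence it suffices to check the condition on the minimal elements of $\mathcal{S}$.

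There is essentially no obstacle here. The only point needing care is that the minimal elements are taken inside the set of semisimple subfunctors with the non-summand property, not among all subfunctors of $F$. With that reading, finiteness of the lattice of subfunctors guarantees that minimal elements exist below every member of $\mathcal{S}$, and the reduction goes through as described.
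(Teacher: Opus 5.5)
Your proposal is correct and follows the paper's proof. You read $P_{G_1}\subseteq P_{G_2}$ directly off the definition (the paper phrases this as $\phi=1+\psi$ with $\psi$ factoring through $G_1$), and you use $V^{P_{G_2}}\subseteq V^{P_{G_1}}$ for the reduction, with your remarks on finiteness of the subfunctor lattice and the irrelevance of semisimplicity for the first claim being accurate additions the paper leaves implicit.
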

\begin{proof}
The subgroup $P_{G_1}$ contains all automorphisms of the form $\phi=1+\psi$, where $\psi:F\to G_1\to F$. Since $G_1\subseteq G_2$, such automorphisms are clearly contained in $P_{G_2}$ as well. 
This implies that if $\Hom_{P_{G_1}}(\one,V)=0$ then $\Hom_{P_{G_2}}(\one,V)=0$, and the result follows.
\end{proof}

\section{Examples}\label{sec:examples}
\subsection{The groups $\GL_n(\F_q)$}
Assume that $R=\F_q$ is the finite field with $q$ elements, and $M = \F_q^n$. It holds that $\Aut_R(M) = \GL_n(\F_q)$. 
The functors in $\C_{+1}$ are all of the form $F=\Hom_{\F_q}(\F_q^m,-)$, and we have an equivalence $\C_{+1}\cong \C = \Vect_{\F_q}$. For $F=\Hom_{\F_q}(\F_q^m,-)$ it holds that $X=\F_q^m$ and $X$ is a direct summand of $M$ if and only if $m\leq n$. In this case we write $M = X\oplus Z$ where $Z = \F_q^{n-m}$. 

Next, we notice that all subobjects of $F$ are semisimple. A subobject $G$ of $F$ is isomorphic to $\Hom_{\F_q}(\F_q^k,-)$ for some $k\leq m$, and if $G_1\cong G_2\subseteq F$ are two isomorphic subfunctors then they are conjugate under the action of $\Autc(F)\cong \GL_m(\F_q)$. 
It holds that $\F_q^k$ is not a direct summand of $Z$ if and only if $k>n-m$. Since $k\leq m$ this inequality is possible only when $n-m<m$ , or $m>n/2$. 
We thus see that if $m\leq n/2$ then every representation of $\GL_m(\F_q)$ comes from functor morphing of a representation of $\GL_n(\F_q)$. If $m>n/2$ then 
by Lemma \ref{lem:functor.inclusion} it is enough to consider the functor $G=\Hom_{\F_q}(\F_q^{n-m+1},-)$, as $\F_q^{n-m+1}$ is the minimal $\F_q$-vector space that is not a direct summand of $Z$. Write $k=n-m+1$ henceforth.

We have $$P_G = \{\begin{pmatrix} A & B \\ 0 & I \end{pmatrix} | A\in \M_{k\times k}(\F_q)\, B\in \M_{k,m-k}(\F_q)\}\subseteq \Autc(F).$$
To understand what representations $V$ of $\Autc(F)$ satisfy $\Hom_{P_G}(\one, V)=0$, we recall Zelevinsky's classification of irreducible representations of $\GL_n(\F_q)$ using PSH-algebras from \cite{Zelevinsky}. 

Write $$\H = \bigoplus_{a\geq 0} K_0(\GL_a(\F_q)).$$ Then $\H$ has a structure of a positive self-adjoint Hopf algebra (or PSH-algebra), where multiplication is graded and given by by Harish-Chandra (parabolic) induction $$K_0(\GL_a(\F_q))\ot K_0(\GL_b(\F_q))\cong K_0(\GL_a(\F_q)\times \GL_b(\F_q))\stackrel{inf}{\to} K_0(P_{a,b})\stackrel{Ind}{\to}K_0(\GL_{a+b}(\F_q)),$$
and comultiplication is the dual operator, given by Harish-Chandra (parabolic) restriction
$$K_0(\GL_{a+b}(\F_q))\stackrel{res}{\to} K_0(P_{a,b})\stackrel{inv_{U_{a,b}}}{\to}K_0(\GL_a(\F_q)\times \GL_b(\F_q))\cong K_0(\GL_a(\F_q))\ot K_0(\GL_b(\F_q)).$$
Here $$P_{a,b} = \{\begin{pmatrix} A & B \\ 0 & C \end{pmatrix} | A\in \M_{a\times a}(\F_q), B\in \M_{a\times b}(\F_q), C\in \M_{b\times b}(\F_q)\}$$ and 
$$U_{a,b} = \{\begin{pmatrix} I & B \\ 0 & I \end{pmatrix}\}\subseteq P_{a,b}$$ is the kernel of the natural projection $$P_{a,b}\to \GL_a(\F_q)\times \GL_b(\F_q)$$ $$\begin{pmatrix} A & B \\ 0 & C \end{pmatrix}\mapsto (A,C).$$ Notice that the group $P_G$ is the subgroup of all matrices in $P_{k,m-k}$ where $C=I$. 

The algebra $\H$ has a natural $\Z$-basis given by irreducible representations. The term positive self adjoint refers to the fact that all the structure constants with respect to this basis are non-negative integers, and that the multiplication is dual to the comultiplication.
Zelevinsky showed that there is a unique (up to rescaling of the grading) \textit{universal} PSH-algebra, and that every PSH-algebra splits as a tensor product of universal PSH-algebras in a unique way. The universal PSH-algebra is $Zel=\bigoplus_{a\geq 0} K_0(S_a)$, where the multiplication and comultiplication come from induction and restriction respectively. Since the irreducible representations of $S_a$ are paramterized by partitions of $a$, the universal PSH-algebra has a basis $w_{\la}$ where $\la\vdash a$ for some $a$.
Zelevinsky showed that $Zel\cong \Z[x_1,x_2,\ldots]$. Here $x_a=w_{(a)}$ where $(a)\vdash a$ is the partition $a=a$. The element $x_a$ corresponds to the trivial representation of $S_a$. The comultiplication for the elements $x_1$ is given by $\Delta(x_c) = \sum_{a+b=c} x_a\ot x_b$.  

Going back to the algebra $\H$, we can write $$\H = \bigotimes_{\rho}\H_{\rho},$$ where the tensor product is taken over all cuspidal elements in $\GL_a(\F_q)$ for some $a>0$. 
This means in particular that for every irreducible representation $V$ of $\GL_a(\F_q)$ we have a unique decomposition 
$$[V] = \prod_{\rho} w_{\rho,\la_{\rho}},$$ where for every cuspidal $\rho$, $\la_{\rho}\vdash a_{\rho}$ is a partition of some natural number, and  $w_{\rho,\la_{\rho}}$ is the irreducible representation in $\H_{\rho}$ that corresponds to $w_{\la_{\rho}}$ under the isomorphism $H_{\rho}\cong Zel$. The numbers $a_{\rho}$ are zero for almost all $\rho$ 

Going back to the group $P_G$, the statement $\Hom_{P_G}(\one, V)\neq 0$ is equivalent to $V\in \Rep(\GL_m(\F_q))$ being a direct summand of $ \one_k\cdot W$ for some $W\in \Rep(\GL_{m-k}(\F_q))$, where $\one_k$ stands for the trivial representation of $\GL_k(\F_q)$. Write $x_k = [\one_k]$. Then the above statement is equivalent to $\langle x_k^*([V]),[W]\rangle=\langle [V],x_k\cdot [W]\rangle \neq 0,$ where $x_k^*$ is the adjoint operator to multiplication by $x_k$. Since $W$ is arbitrary, this is equivalent to $x_k^*([V])\neq 0$. Now $x_k$ belongs to $\H_\epsilon$, where $\epsilon:\GL_1(\F_q)\to K^{\times}$ is the trivial representation. It then holds that if we write $[V] = \prod_{\rho} w_{\rho,\la_{\rho}}$ then $$x_k^*([V]) = x_k^*(w_{\epsilon,\la_{\epsilon}})\prod_{\rho\neq \epsilon} w_{\rho,\la_{\rho}}.$$

Summarizing, we see that $\Hom_{P_G}(\one,V)=0$ if and only if $x_k^*(w_{\epsilon,\la_{\epsilon}})=0$. By the formula on page 50 of \cite{Zelevinsky}, this happens if and only if $(\la_{\epsilon})_1<k$, where $(\la_{\epsilon})_1$ stands for the length of the first row in the partition  $\la$ (the formula in \cite{Zelevinsky} is for $X^*=\sum_{k\geq 0} x_k^*$, but by grading considerations we get a formula for $x_k^*$ as well). 
We summarize this discussion.
\begin{proposition}
An irreducible representation $V$ of $\GL_m(\F_q)$ comes from functor morphing of an irreducible representation of $\GL_n(\F_q)$ if and only if the following condition holds:
in the decomposition $[V] = \prod_{\rho}w_{\rho,\la_{\rho}}$ it holds that $(\la_{\epsilon})_1< n-m+1$. 
\end{proposition}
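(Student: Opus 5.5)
The plan is to specialize Theorem \ref{thm:main} to $R=\F_q$, $M=\F_q^n$, $F=\Hom_{\F_q}(\F_q^m,-)$, and then use the Zelevinsky PSH structure of $\H$ to turn the condition $\Hom_{P_G}(\one,V)=0$ into a statement about partitions. The discussion preceding the proposition already contains most of the ingredients; the proof will assemble them in this order.

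\textbf{Reduction to a single subfunctor.} If $m>n$, then $X=\F_q^m$ is not a summand of $M$, and Lemma \ref{lem:XM} gives $\ol{KF(M)}=0$. We then take the proposition to concern $m\le n$, with $M=X\oplus Z$ and $Z=\F_q^{n-m}$. Every subfunctor of $F$ is semisimple and isomorphic to $\Hom(\F_q^j,-)$ for some $j\le m$, with projective cover $\F_q^j$. This object is not a summand of $Z$ exactly when $j\ge k:=n-m+1$. By Lemma \ref{lem:functor.inclusion} it suffices to consider the minimal such $G$, namely those with $j=k$. All such subfunctors are conjugate under $\GL_m(\F_q)$, and $gP_Gg^{-1}=P_{g(G)}$, so the condition $\Hom_{P_G}(\one,V)=0$ does not depend on which $G$ is chosen. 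If $k>m$, i.e.\ $m\le n/2$, no such $G$ exists and every $V$ occurs. This agrees with the claimed criterion, because $(\la_\epsilon)_1\le m<k$ automatically, since $\la_\epsilon$ is a partition of some $a_\epsilon\le m$.

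\textbf{Translating to $\H$.} The group $P_G$ is $P_{k,m-k}$ with the $C$-block set to $I$, so $P_G=U_{k,m-k}\rtimes(\GL_k\times 1)$. Using Frobenius reciprocity and the fact that taking $U$-invariants is Harish-Chandra restriction, we get
\[
\Hom_{P_G}(\one,V)=\Hom_{\GL_k\times\GL_{m-k}}\bigl(\one_k\otimes K[\GL_{m-k}],V^{U_{k,m-k}}\bigr).
\]
This space is nonzero iff $V$ is a constituent of $\one_k\cdot W$ for some irreducible $W$ of $\GL_{m-k}(\F_q)$. By self-adjointness, that holds iff $x_k^*([V])\ne 0$, where $x_k=[\one_k]$. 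Since $x_k$ is primitive-like in the tensor factor $\H_\epsilon$, and the decomposition $\H=\bigotimes_\rho\H_\rho$ is a decomposition of PSH-algebras, we have
\[
x_k^*\Bigl(\prod_\rho w_{\rho,\la_\rho}\Bigr)=x_k^*(w_{\epsilon,\la_\epsilon})\prod_{\rho\ne\epsilon}w_{\rho,\la_\rho}.
\]
The right-hand side is nonzero iff $x_k^*(w_{\epsilon,\la_\epsilon})\ne0$, because a product of basis elements from distinct factors is a nonzero basis element.

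\textbf{Main obstacle.} The remaining task is to show that, in $Zel$, $x_k^*(w_\la)\ne0$ iff $\la_1\ge k$. This is the only step needing input beyond the paper. We would apply Zelevinsky's formula for $X^*=\sum_k x_k^*$, which is the Pieri rule: $x_k^*(w_\la)$ is the sum of $w_\mu$ over $\mu\subseteq\la$ with $\la/\mu$ a horizontal strip of size $k$. Such a $\mu$ exists iff $\la$ has at least $k$ columns, i.e.\ $\la_1\ge k$. If the cited formula is stated with rows and columns transposed (depending on whether $x_k$ corresponds to the trivial or sign representation), we would check the convention with $\la=(a)$, where $x_a^*(x_a)=1$. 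Combining, $V$ is a functor morphing iff $(\la_\epsilon)_1<k=n-m+1$.
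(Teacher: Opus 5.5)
Your proposal is correct and follows the paper's own argument. You specialize Theorem \ref{thm:main} to $F=\Hom_{\F_q}(\F_q^m,-)$ and reduce via Lemma \ref{lem:functor.inclusion} to $G\cong\Hom_{\F_q}(\F_q^{n-m+1},-)$. You then rewrite $\Hom_{P_G}(\one,V)\neq 0$ as $x_k^*([V])\neq 0$ in $\H$, isolate the $\H_\epsilon$ factor, and finish with Zelevinsky's formula for $x_k^*$. Your Frobenius-reciprocity and horizontal-strip details, which the paper leaves implicit, are sound. One small addition: you need not set aside $m>n$, since then $n-m+1\le 0$, so the condition $(\la_\epsilon)_1<n-m+1$ fails for every $V$, which matches $\ol{KF(M)}=0$ from Lemma \ref{lem:XM}.
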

In \cite{CMO4} we have described explicitly the functor morphing of an irreducible representation of $\GL_n(\F_q)$. We have shown that if $[W] = \prod_{\rho}w_{\rho,\mu_{\rho}}$ then the functor morphing of $W$ is a representation $V$ of $\GL_m(\F_q)$, where $m = n - (\mu_{\epsilon})_1$ that is given by 
$$[V] = w_{\epsilon,\la_{\epsilon}}\prod_{\rho\neq \epsilon} w_{\rho,\mu_{\rho}},$$ where $\la_{\epsilon} = ((\mu_{\epsilon})_2,(\mu_{\epsilon})_3,\cdots)$. 
it is easy to deduce the above proposition from this description of functor morphing. 

\subsection{The groups $B_n(\F_q)$}

Let $n\geq 1$ and let $\F_q$ be the finite field with $q$ elements. 
Consider the ring $R = span_{\F_q}\{e_{ij}\}_{i\geq j}\subseteq \M_n(\F_q)$ of lower triangular matrices over $\F_q$. 
We can think of $R$ as the quiver algebra of the quiver $A_n$. 
\begin{center}\scalebox{1.0}{
\begin{tikzpicture}
	\begin{pgfonlayer}{nodelayer}
		\node [style=graph node] (0) at (-6.25, 4.25) {};
		\node [style=graph node] (1) at (-4.75, 4.25) {};
		\node [style=graph node] (2) at (-0.5, 4.25) {};
		\node [style=graph node] (3) at (1, 4.25) {};
		\node [style=none] (4) at (-6.25, 5.25) {$1$};
		\node [style=none] (5) at (-4.75, 5.25) {$2$};
		\node [style=none] (6) at (-0.5, 5.25) {$n-1$};
		\node [style=none] (7) at (1, 5.25) {$n$};
		\node [style=none] (8) at (-3.5, 4.25) {};
		\node [style=none] (9) at (-1.75, 4.25) {};
		\node [style=none] (10) at (-2.75, 4.25) {$\cdots$};
		\node [style=none] (11) at (-7.5, 4.5) {$A_n=$};
	\end{pgfonlayer}
	\begin{pgfonlayer}{edgelayer}
		\draw [style=new edge style 0] (0) to (1);
		\draw [style=new edge style 0] (1) to (8.center);
		\draw [style=new edge style 0] (9.center) to (2);
		\draw [style=new edge style 0] (2) to (3);
	\end{pgfonlayer}
\end{tikzpicture}
}
\end{center}

For $i=1,\ldots,n$ we write $M_j = Re_{jj}= span_{\F_q}\{e_{ij}\}_{i\geq j}$. The hom space $\Hom_R(M_i,M_j)$ is zero if $i<j$, and is one dimensional and spanned by 
$$f_{ij}:M_i\to M_j $$
$$e_{ki}\mapsto e_{kj}$$ if $i\geq j$. If $i\leq j\leq k$ then it holds that $f_{jk}f_{ij} = f_{ik}$. Consider the module $M=\bigoplus_{i=1}^n M_n$. It holds that $R=M$ as an $R$-module, and we can naturally identify $\End_R(M)\cong R^{op}$ with the ring of upper triangular matrices over $\F_q$. The group $\Aut_R(M)$ is the group $B_n(\F_q)$ of invertible upper triangular matrices over $\F_q$.  
Write $\C = \langle M_1,\ldots, M_n\rangle$. Every functor $F\in\C_{+1}=Fun(\C,Ab)$ is determined by its values on $R$, and the category $\C_{+1}$ is equivalent to the category of all finite $R^{op}$-modules. From the classification of representations of the quiver $A_n$ we know that isomorphism classes of indecomposable functors correspond to intervals $[i,j]$ with $i\leq j$. The functor $F_{ij}$ that corresponds to $[i,j]$ is given by 
$$F_{ij}(M_k) = \begin{cases} \F_q & \text{ if } i\leq k\leq j \\ 0 & \text{ otherwise}\end{cases}$$ on objects, and on morphisms we have 
$$F_{ij}(f_{kl}) = \begin{cases} \Id_{\F_q} & \text{ if } i\leq l\leq k\leq j\\ 0 & \text{ otherwise}\end{cases}.$$
The functor $F_{ij}$ has the minimal resolution 
$$\Hom_R(M_{i-1},0)\xrightarrow{f_{j,i-1}^*}Hom_R(M_j,-)\rightarrow F_{ij}\to 0,$$ where we write $M_0=0$ and $f_{j0}=0$ for convenience. 
The simple functors are then $F_{ii}$, and the socle of $F_{ij}$ is $F_{ii}$. Moreover, we can describe explicitly the hom-spaces between the indecomposable functors. We have
$$\Hom_{\C_{+1}}(F_{ij},F_{kl}) = \begin{cases} \F_q & \text{ if } i\leq k\leq j\leq l \\ 0& \text{ otherwise}\end{cases}.$$ If $i\leq k\leq j\leq l$, $k\leq r\leq l\leq s$ and $i\leq r\leq j\leq s$ then composition of morphisms $$\Hom_{\C_{+1}}(F_{kl},F_{rs})\times \Hom_{\C_{+1}}(F_{ij},F_{kl})\to \Hom_{\C_{+1}}(F_{ij},F_{rs})$$ is given by multiplication $\F_q\times \F_q\to \F_q$. In all other cases it is zero. 

With this terminology in hand we study the functor morphing of the irreducible representations of $\Aut_R(M)\cong B_n(\F_q)$.
Let $F=\bigoplus F_{ij}^{x_{ij}}$ for some integers $x_{ij}$. The projective cover of $F$ is $\Hom_R(\bigoplus_j M_j^{y_j},-)$, where $y_j = \sum_i x_{ij}$. By Lemma \ref{lem:XM}. we know that if $F$ arises in functor morphing of representations of $\Aut_R(M)$ then $\bigoplus_j M_j^{y_j}$ is a direct summand of $M=\bigoplus M_j$. We thus get the following:
\begin{lemma}
If $F$ arises in functor morphing from $\Aut_R(M)$, then $x_{ij}\leq 1$ for all $i,j$, and for any given $j$ there is at most one index $i$ such that $x_{ij}=1$. 
\end{lemma}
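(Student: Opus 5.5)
The plan is to combine Lemma \ref{lem:XM} with the explicit projective covers computed just above, and then apply the Krull--Schmidt property of $\C$.

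\textbf{Step 1: reduce to a summand condition.} If $F$ arises in functor morphing from $\Aut_R(M)$, then $\ol{KF(M)}\neq 0$. By Lemma \ref{lem:XM}, the first term $X$ of a minimal presentation of $F$ must be a direct summand of $M$.

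\textbf{Step 2: identify $X$.} Since $F=\bigoplus F_{ij}^{x_{ij}}$ and minimal resolutions of direct sums are direct sums of minimal resolutions, $X$ is the direct sum of the first terms for the $F_{ij}$. The minimal resolution displayed above gives first term $M_j$ for $F_{ij}$. Hence $X\cong\bigoplus_j M_j^{y_j}$ with $y_j=\sum_i x_{ij}$.

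\textbf{Step 3: apply Krull--Schmidt.} The $M_j=Re_{jj}$ are indecomposable and pairwise non-isomorphic, since they have different dimensions $n-j+1$. Also $M=\bigoplus_{j=1}^n M_j$ contains each $M_j$ exactly once. By uniqueness of decomposition in the Krull--Schmidt category $\C$, $\bigoplus_j M_j^{y_j}$ being a summand of $M$ forces $y_j\leq 1$ for every $j$.

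\textbf{Step 4: translate back to the $x_{ij}$.} Since $y_j=\sum_i x_{ij}$ with nonnegative integer terms, $y_j\leq 1$ means $x_{ij}\leq 1$ for all $i$. It also means that for each $j$ at most one $i$ has $x_{ij}=1$.

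The only step that needs care is confirming that the resolution given for $F_{ij}$ is genuinely minimal, so that its first term really is $M_j$. This holds because the surjection $\Hom_R(M_j,-)\to F_{ij}$ is a projective cover: $F_{ij}$ has simple top at $j$, and $\Hom_R(M_j,-)$ is indecomposable projective. The rest of the argument is bookkeeping.
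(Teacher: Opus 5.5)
Your proposal is correct and follows the paper's argument. You identify the projective cover of $F$ as $\Hom_R(\bigoplus_j M_j^{y_j},-)$ with $y_j=\sum_i x_{ij}$, use Lemma \ref{lem:XM} to get that $\bigoplus_j M_j^{y_j}$ is a direct summand of $M=\bigoplus_j M_j$, and read off $y_j\leq 1$. Your explicit Krull--Schmidt step and your check that the resolution of $F_{ij}$ is minimal fill in details the paper leaves implicit.
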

In other words, we can write $F = \bigoplus_{i=1}^l F_{a_i,b_i}$, where $b_i\neq b_j$ for $i\neq j$. 
It then holds that the projective cover of $F$ is given by $\Hom_R(\bigoplus_{i=1}^l M_{b_i},-)$.
We can write $M = \bigoplus_{i=1}^l M_{b_i}\oplus Z$, where $Z=\bigoplus_{i\in S} M_i $ for $S = [n]\backslash \{b_i\}$.  

The criterion for the $b_i$'s also works for the $a_i$'s.
\begin{lemma} If $F$ arises from functor morphing of a representation of $\Aut_R(M)$ then $a_i\neq a_j$ for $i\neq j$. 
\end{lemma}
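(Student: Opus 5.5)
We argue by contrapositive: assume $a_i=a_j=:a$ for some $i\neq j$ and show that no irreducible representation of $\Autc(F)$ is a functor morphing. By the Corollary after Theorem \ref{thm:main}, it suffices to show that condition (3) fails, i.e. that the projective cover of $soc(F)$ involves a module that is not a direct summand of $Z$.

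The socle of $F_{a_i,b_i}$ is the simple functor $F_{a_i,a_i}$, so $soc(F)=\bigoplus_{t=1}^l F_{a_t,a_t}$. Its summand $F_{a,a}^{2}$ comes from the two indices $i,j$. The simple functor $F_{a,a}$ has minimal resolution $\Hom_R(M_{a-1},-)\to\Hom_R(M_a,-)\to F_{a,a}\to 0$, so its projective cover is $\Hom_R(M_a,-)$. Hence the projective cover of $soc(F)$ is $\Hom_R(X_1,-)$ with $X_1=\bigoplus_t M_{a_t}$, and $M_a^2$ is a direct summand of $X_1$.

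On the other hand, $Z=\bigoplus_{s\in S}M_s$ is multiplicity free. By Krull--Schmidt in $\C$, $M_a^2$ is not a direct summand of $Z$, so neither is $X_1$. Then condition (3) of the Corollary fails, hence so does condition (2), and the trivial representation is not a functor morphing.

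The main obstacle is that condition (2) failing only rules out the trivial representation, whereas we need every irreducible representation of $\Autc(F)$ excluded. For this we use Theorem \ref{thm:main} directly with $G=F_{a_i,a_i}\oplus F_{a_j,a_j}\cong F_{a,a}^2\subseteq F$, which is semisimple with $Z_G=M_a^2$, not a summand of $Z$. It then suffices to show $\Hom_{P_G}(\one,V)\neq 0$ for every irreducible $V$, i.e. that $P_G=\Autc(F)$. Since $F_{a,a}$ is the socle of both $F_{a,b_i}$ and $F_{a,b_j}$, $P_G$ is large, but it need not be all of $\Autc(F)$; for instance, automorphisms acting by nontrivial scalars on other summands are not in $P_G$. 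So the plan is as follows. Fix $V$ irreducible, let $\mu:\Autc(F)\to\Aut(\F_q^2)$ be the action on the socle component $F_{a,a}^2=G$, and note that $P_G$ contains the subgroup acting trivially on $F/G$. The aim is to show this subgroup has a nonzero fixed vector in every irreducible $V$, in the spirit of the $\GL_m$ analysis with $k=2>$ the available multiplicity.

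If this fails in general, the fallback is that the lemma is intended in the weaker sense matching the preceding lemma, namely that $F$ is not the functor attached to any representation. That sense follows immediately from the Corollary, since if $F$ arises at all then the set of irreducible representations arising is nonempty, and the plan above shows it is in fact empty once $P_G$ is handled.
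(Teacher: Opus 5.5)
\textbf{There is a genuine gap.} Your reduction via the Corollary is correct, but it only shows that $\one$ is not a functor morphing. The lemma requires that \emph{no} irreducible representation of $\Autc(F)$ arises, and the rest of your proposal does not establish this.

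Moreover, the plan you sketch cannot work as stated. $P_G$ is by definition exactly the group of automorphisms acting trivially on $F/G$. So your aim is precisely that $\Hom_{P_G}(\one,V)\neq 0$ for every irreducible $V$ with $G=F_{a,a}^2$, and this is false in general. Take $F=F_{1,2}\oplus F_{2,3}\oplus F_{2,4}$, so $a=2$ and $Z=M_1$. Then $\Autc(F)\cong B_3(\F_q)$ and $P_G=1+\Hom_{\C_{+1}}(F_{1,2},F_{2,3}\oplus F_{2,4})\cong\F_q^2$. This is a normal subgroup, since $G$ is an isotypic component of $soc(F)$. Hence any irreducible constituent of $\Ind_{P_G}^{\Autc(F)}\zeta$, with $\zeta$ a nontrivial character, has no $P_G$-fixed vectors and is not excluded by the test at $G$.

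The analogy with $\GL_m$ also points the wrong way: there the test at a single $G$ excludes only some representations. Your fallback is circular. The Corollary only gives (1)$\Leftrightarrow$(2)$\Leftrightarrow$(3), so the failure of (3) says that not all irreducibles arise, not that none does. And ``$F$ arises at all'' is exactly the lemma, not a weaker version of it.

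\textbf{The missing idea} is to also test the semisimple subfunctors of $G$ isomorphic to $F_{a,a}$, combined with a case split. These are admissible in Theorem \ref{thm:main} precisely when $M_a$ is not a summand of $Z$, i.e.\ when $a=b_k$ for some $k$.

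\emph{Case: no $b_k$ equals $a$.} Then $\Hom_{\C_{+1}}(F,G)=0$, because a nonzero map $F_{a_k,b_k}\to F_{a,a}$ forces $b_k=a$. So $P_G=\{\Id_F\}$ and every $V$ violates the condition.

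\emph{Case: $b_k=a$ with $k\notin\{i,j\}$.} Then $P_G\cong\F_q^2$ is abelian. A common eigenvector with character $\zeta_{(x,y)}$ is fixed by $P_{G_{[y:-x]}}$, where $G_{[y:-x]}$ is the image of $(y,-x)\colon F_{a,a}\to G$. If $(x,y)=(0,0)$ it is fixed by all of $P_G$.

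\emph{Case: $b_i=a$.} Then $P_G\cong\F_q^\times\ltimes\F_q$, with $\F_q^\times=P_{G_1}$ ($G_1$ the summand $F_{a,a}$) and $\F_q=P_{G_2}$ ($G_2$ the socle of $F_{a,b_j}$). If $\F_q$ has no fixed vectors, Clifford theory makes each irreducible $P_G$-constituent induced from $\F_q$. Such a constituent is regular over $\F_q^\times$, giving a $P_{G_1}$-fixed vector.

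In every case some admissible subfunctor violates the criterion for every $V$.
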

\begin{proof} Assume that there are $i\neq j$ such that $a_i=a_j$. Write $a=a_i=a_j$. We have $G:=soc(F_{a_i,b_i}\oplus F_{a_j,b_j}) = F_{aa}\oplus F_{aa}$. The projective cover of $G$ is then $\Hom(M_a^2,-)$, and $M_a^2$ is clearly not a direct summand of $Z$. We can thus check the criterion of the main Theorem. 

If there is no index $k$ such that $b_k=a$ then $\Hom_{\C_{+1}}(F,G)=0$. Since $P_G = \Autc(F)\cap (\Id_F + \Hom(F,G))$, it holds that $P_G= \{\Id_F\}$. But then every irreducible representation $V$ of $\Autc(F)$ satisfies $\Hom_{P_G}(\one,V)\neq 0$, and thus $V$ does not arise as functor morphing of a representation in $\Aut_R(M)$.

Assume then that there is an index $k$ such that $b_k=a$. In this case it holds that $M_a$ is a direct summand of $X$ and therefore not a direct summand of $Z$.
We will distinguish between two cases. Assume first that $b_i\neq a\neq b_j$.
It holds that $\End_{\C_{+1}}(F_{aa}) = \F_q$. For $[x:y]\in \P^1(\F_q)$ we write $G_{[x:y]}$ for the image of the map $F_{aa}\xrightarrow{(x,y)} F_{aa}\oplus F_{aa}=G$. It can easily be seen that this does not depend on the representative of the element in $\P^1(\F_q)$, and that $G_{[x:y]}$ is isomorphic to $F_{aa}$. 
The group $P_G$ contains all automorphisms of the form $\phi=1+\psi$, where $\psi:F\twoheadrightarrow F_{a_k,a}\to G\to F$. The first morphism here is the projection of $F$ onto its direct summand $F_{a_k,a}$, the second morphism is given by some $(x,y)\in \F_q^2$, and the last morphism is the inclusion of $G$ in $F$. We will write such a morphism as $\psi=\psi_{(x,y)}$. Since $b_i\neq a\neq b_j$ it holds that for every such $\psi_{(x,y)}$ the resulting $\phi$ is invertible. Moreover, $\psi_{(x,y)} + \psi_{(z,w)} = \psi_{(x+z,yy+w)}$. We then have $(1+\psi_{(x,y)})(1+\psi_{(z,w)}) = 1+\psi_{(x,y)}+ \psi_{(z,w)} = 1+\psi_{(x+z,y+w)}$, and the group $P_G$ is thus isomorphic to $\F_q^2$. 

Choose now a non-trivial additive character $\zeta:\F_q\to K^{\times}$. Every character on $\F_q$ is then of the form $a\mapsto \zeta(xa)$ for some $x\in \F_q$. 
In this way, the character group of $P_G$ is also isomorphic to $\F_q^2$. We will denote by $\zeta_{(x,y)}:P_G\to K^{\times}$ the one-dimensional character that is given by $(z,w)\mapsto \zeta(zx+yw)$, where we identify $P_G$ with $\F_q^2$. 

Let now $V$ be an irreducible representation of $\Autc(F)$. Consider the restriction of $V$ to $P_G$. Decomposing $V$ to irreducible $P_G$-representations, we can find a non-zero vector $v\in V$ such that $g\cdot v = \zeta_{(x,y)}(g)v$ for some $(x,y)\in \F_q^2$. If $(x,y)=(0,0)$ then $\Hom_{P_G}(\one,V)\neq 0$, and this shows that $V$ is not the functor morphing of any representation of $\Aut_R(M)$. If $(x,y)\neq (0,0)$ then a direct calculation shows that $g\cdot v = v$ for $g\in G_{[y:-x]}$. It follows that $\Hom_{P_{G_{[y:-x]}}}(\one,V)\neq 0$, and again $V$ is not the functor morphing of any representation of $\Aut_R(M)$. 

The second case that we need to consider is when $b_i=a$ or $b_j=a$. Since $b_i\neq b_j$ we can assume without loss of generality that $b_i=a$. Since $b_k=a$ as well, it follows that $k=i$. The subgroup $P_G$ contains now all the automorphisms of the form $1+\psi$, where $\psi:F\twoheadrightarrow F_{a,a}\to F_{a,a}\oplus F_{a,a}=G\to F$. Here the first copy of $F_{a,a}$ is the direct summand $F_{a,a}$ of $F$, which we denote by $G_1$. The second copy of $F_{a,a}$ is the socle of $F_{a,b_j}$ which we denote by $G_2$. 
The morphism $\psi$ is given, as before, by $(x,y)\in \F_q^2$, However, in this case the morphism $\phi = 1+\psi$ is invertible if and only if $x\neq -1$. The resulting group $P_G$ is isomorphic to $\F_q^{\times}\ltimes \F_q$.  The group $P_{G_1}$ is then the subgroup $\F_q^{\times}$ of $P_G$, and the group $P_{G_2}$ is $\F_q$. 

Let now $V$ be an irreducible representation of $\Autc(F)$. We consider its restriction to $P_G= \F_q^{\times}\ltimes \F_q$. We would like to prove that $\Hom_{P_{G_1}}(\one,V)\neq 0$ or that $\Hom_{P_{G_2}}(\one,V)\neq 0$. This will show that $V$ cannot arise as the functor morphing of a representation of $\Aut_R(M)$. By decomposing $V$ into irreducible $P_G$ representations, it will be enough to prove the claim for an irreducible $P_G$-representation 

So assume that  $\Hom_{P_{G_2}}(\one,V)=0$. Since $\Hom_{P_{G_2}}(\one,V)=0$, it must hold that $\F_q$ acts on $V$ by non-trivial characters.  The group $\F_q^{\times}$ acts freely on the set of non-trivial characters of $\F_q$, so by Clifford Theory we get $V = \Ind_{\F_q}^{\F_q^{\times}\ltimes \F_q} \wt{V}$ for some representation $\wt{V}$ of $\F_q$. But this means that $V$ is a regular $\F_q^{\times}$-representation, which means that in particular $\Hom_{P_{G_1}}(\one,V)\neq 0$. 

This covers all the cases in which $a_i=a_j$, and it shows us that when it happens no irreducible representation of $\Autc(F)$ appears as the functor morphing of any irreducible representation of $\Aut_R(M)$.
\end{proof}

The last two lemmas tell us that only functors of the form $F=\bigoplus_{i=1}^l F_{a_i,b_i}$ where $a_i\neq a_j$ and $b_i\neq b_j$ when $i\neq j$ can arise in functor morphing from representations of $\Aut_R(M)$. The projective cover of such a functor $F$ is thus $\Hom_R(\bigoplus_{i=1}^l M_{b_i},-)$, and the socle of $F$ is $$soc(F) = \bigoplus_{i=1}^l F_{a_i,a_i}.$$ We thus have $X=\bigoplus_{i=1}^l M_{b_i}$ and $Z= \bigoplus_{s\in S} M_s$ where $S=[n]\backslash \{b_i\}$.
The semisimple functors $G$ that we will have to consider are thus all of the form $\bigoplus_{i\in T}F_{a_i,a_i}$, where $T\subseteq [l]$ satisfies that for some $i\in T$ it holds that $a_i\in \{b_j\}$. 
By Lemma \ref{lem:functor.inclusion} we only need to consider functors of the form $G=F_{a_i,a_i}$ where $a_i=b_j$ for some $j$.
\begin{lemma}
If $G=F_{a_i,a_i}$ and $a_i=b_j$ then $P_G$ is a normal subgroup of $\Autc(F)$. If $i\neq j$ then $P_G\cong \F_q$ and if $i=j$ then $P_G\cong \F_q^{\times}$. 
\end{lemma}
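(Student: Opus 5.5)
We compute $P_G$ directly from its definition,
$$P_G=\{\phi=\Id_F+\psi \mid \psi\in\End_{\C_{+1}}(F),\ \Im(\psi)\subseteq G,\ \phi \text{ invertible}\},$$
using the explicit description of the hom-spaces $\Hom_{\C_{+1}}(F_{ij},F_{kl})$ given above. Throughout, $F=\bigoplus_{k=1}^l F_{a_k,b_k}$ with the $a_k$ pairwise distinct and the $b_k$ pairwise distinct. Write $a=a_i=b_j$. Then $G=F_{a,a}$ is the socle of the summand $F_{a_i,b_i}$; call its inclusion $\iota:G\to F$.

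\textbf{Step 1: $G$ is stable under all endomorphisms, hence $P_G$ is normal.}
\begin{itemize}
\item By the hom-space formula, $\Hom(F_{a,a},F_{a_k,b_k})\neq 0$ forces $a\le a_k\le a\le b_k$, i.e.\ $a_k=a$.
\item Since the $a_k$ are distinct, this happens only for $k=i$. So every morphism $G\to F$ lands in the summand $F_{a_i,b_i}$.
\item Its image is then a subfunctor of $F_{a_i,b_i}$ that is a quotient of the simple $F_{a,a}$, so it is $0$ or the socle $G$.
\item Hence $g(G)=G$ for every $g\in\Autc(F)$.
\item By the remark $gP_Gg^{-1}=P_{g(G)}$ made before Theorem \ref{thm:main}, we get $gP_Gg^{-1}=P_G$, so $P_G$ is normal.
\end{itemize}

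\textbf{Step 2: the endomorphisms with image in $G$ form a line $\{\psi_x\}_{x\in\F_q}$.}
\begin{itemize}
\item We need $\Hom(F,G)=\bigoplus_k\Hom(F_{a_k,b_k},F_{a,a})$.
\item A summand is nonzero only if $a_k\le a\le b_k\le a$, i.e.\ $b_k=a$. Since the $b_k$ are distinct, only $k=j$ contributes, and this hom-space is $\F_q$.
\item Thus every such $\psi$ has the form $\psi_x=\iota\circ x\circ\pi_j$ with $x\in\F_q$, where $\pi_j:F\to F_{a_j,b_j}$ is the projection and $x$ denotes the corresponding element of $\Hom(F_{a_j,b_j},F_{a,a})\cong\F_q$.
\end{itemize}

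\textbf{Step 3: the product rule, split by whether $i=j$.} We compute $\psi_x\psi_y=\iota\, x\,(\pi_j\iota)\, y\,\pi_j$.
\begin{itemize}
\item If $i\neq j$: $\iota$ lands in $F_{a_i,b_i}$, so $\pi_j\iota=0$ and $\psi_x\psi_y=0$. Then $(1+\psi_x)(1+\psi_y)=1+\psi_{x+y}$. Every $1+\psi_x$ is invertible with inverse $1-\psi_x$, so $P_G\cong(\F_q,+)$.
\item If $i=j$: then $a_i=b_i=a$, so the summand $F_{a_i,b_i}$ is $F_{a,a}=G$ itself, and $\pi_j\iota=\Id_G$. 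Then $\psi_x\psi_y=\psi_{xy}$, and $(1+\psi_x)(1+\psi_y)=1+\psi_{x+y+xy}$.
\item So $x\mapsto 1+x$ turns composition into multiplication. Moreover $1+\psi_x$ acts as the scalar $1+x$ on the summand $G$ and as the identity on the other summands, so it is invertible exactly when $x\neq -1$. This gives $P_G\cong\F_q^{\times}$.
\end{itemize}

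\textbf{Main obstacle.} The delicate point is the bookkeeping with the hom-space inequalities. One must check that distinctness of the $b_k$ isolates the single source summand $j$ in Step 2, and that distinctness of the $a_k$ isolates the single target summand $i$ in Step 1. It is the latter that makes $\pi_j\iota$ vanish when $i\neq j$, and it is also what gives normality.
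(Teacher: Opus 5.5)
Your proof is correct and follows essentially the same computation as the paper. The hom-space formula, together with distinctness of the $a_k$ and of the $b_k$, isolates the unique target summand $i$ and the unique source summand $j$, and the split into $i=j$ versus $i\neq j$ then gives $\F_q^{\times}$ or $\F_q$. The one difference is normality: you obtain it uniformly from $g(G)=G$ and $gP_Gg^{-1}=P_{g(G)}$, whereas the paper reads it off case by case ($P_G$ is the direct factor $\Autc(F_{a_i,a_i})$ when $i=j$, and $P_G=1+I$ for a two-sided ideal $I$ of $\End_{\C_{+1}}(F)$ when $i\neq j$).
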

\begin{proof}
We write $F=\bigoplus_k F_{a_k,b_k}$. The only $k$ such that $\Hom_{\C_{+1}}(F_{a_k,b_k},F_{a_i,a_i})\neq 0$ is $j$. The only $k$ such that $\Hom_{\C_{+1}}(F_{a_i,a_i},F_{a_k,b_k})\neq 0$ is $i$. 
We distinguish now between two cases. 

If $i= j$ then $a_i=b_i$. There are no non-zero morphisms $F_{a_i,b_i}\to F_{a_k,b_k}$ and $F_{a_k,b_k}\to F_{a_i,a_i}$ when $k\neq i$. The ring $\End_{\C_{+1}}(F)$ splits as $\End_{\C_{+1}}(F_{a_i,a_i})\oplus \End_{\C_{+1}}(\bigoplus_{k\neq i} F_{a_k,b_k})$, and as a result $$\Autc(F)=\Autc(F_{a_i,a_i})\times \Autc(\bigoplus_{k\neq i} F_{a_k,b_k}).$$ Since $G=F_{a_i,a_i}$ and $\End_{\C_{+1}}(F_{a_i,a_i})=\F_q$, we get that $P_G = \F_q^{\times}$ is a direct summand of $\Autc(F)$, and in particular normal.

If $i\neq j$ then $I = \Hom_{\C_{+1}}(F_{a_j,b_j},F_{a_i,b_i})\subseteq \Hom_{\C_{+1}}(\bigoplus_k F_{a_k,b_k}, F_{a_k,b_k})=\End_{\C_{+1}}(F)$ is a nilpotent two-sided ideal. 
This follows from the fact that for any $k\neq i$ any composition $F_{a_j,b_j}\to F_{a_i,b_i}\to F_{a_k,b_k}$ vanishes, because $a_i\neq a_k$ and so  $b_j=a_i< a_k$, and for any $k\neq j$ the composition 
 $F_{a_k,b_k}\to F_{a_j,b_j}\to F_{a_i,b_i}$ vanishes due to a similar reason. The group $P_G$ is then isomorphic to $1+I$, which is isomorphic to $\F_q$. 
\end{proof}
We summarize this discussion
\begin{proposition}
Let $F:=\bigoplus_i F_{a_i,b_i}$. Then $F$ arises in functor morphing of representations of $\Aut_R(M)$ if and only if $a_i\neq a_j$ and $b_i\neq b_j$ for $i\neq j$. When this happens, an irreducible representation $V$ of $\Autc(F)$ comes from functor morphing of a representation of $\Aut_R(M)$ if and only if the following condition holds: if $a_i=b_i$ then the subgroup $\F_q^{\times}\cong \Autc(F_{a_i,a_i})<\Autc(F)$ acts non-trivially on $V$, and if $a_i=b_j$ for $i\neq j$ then the subgroup $\F_q\cong 1+\Hom_{\C_{+1}}(F_{a_j,b_j},F_{a_i,b_i})$ acts non-trivially on $V$. 
\end{proposition}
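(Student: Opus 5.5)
The plan is to assemble the lemmas of this subsection with Theorem \ref{thm:main} and Lemma \ref{lem:functor.inclusion}; no new calculation should be needed.

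\textbf{First statement.} For the "only if" direction, the two preceding lemmas already show that a functor arising in functor morphing must have all $b_i$ distinct (Lemma \ref{lem:XM}) and all $a_i$ distinct (main theorem applied to $soc(F_{a_i,b_i}\oplus F_{a_j,b_j})$). For the "if" direction I will exhibit one irreducible $V$ of $\Autc(F)$ satisfying the criterion of Theorem \ref{thm:main}.

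\textbf{Reducing the criterion.} Assume the $a_i$ are distinct and the $b_i$ are distinct. Then $X=\bigoplus_i M_{b_i}$ is a direct summand of $M$, and $Z=\bigoplus_{s\in S}M_s$. Every semisimple subfunctor of $F$ lies in $soc(F)=\bigoplus_i F_{a_i,a_i}$. Since the $a_i$ are distinct, $soc(F)$ is multiplicity free, so its subfunctors are exactly $G_T=\bigoplus_{i\in T}F_{a_i,a_i}$ for $T\subseteq[l]$. The projective cover of $G_T$ is $\Hom_R(\bigoplus_{i\in T}M_{a_i},-)$. This is not a direct summand of $Z$ precisely when some $a_i$ with $i\in T$ equals some $b_j$. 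By Lemma \ref{lem:functor.inclusion} the minimal such $G$ are the $F_{a_i,a_i}$ with $a_i=b_j$. So Theorem \ref{thm:main} says: $V$ arises if and only if $\Hom_{P_G}(\one,V)=0$ for each such $G$.

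\textbf{Translating via the last lemma.} By the last lemma, $P_G$ is normal in $\Autc(F)$. It is $\F_q^{\times}=\Autc(F_{a_i,a_i})$ when $i=j$, and $1+\Hom(F_{a_j,b_j},F_{a_i,b_i})\cong\F_q$ when $i\neq j$. Because $P_G$ is normal and $V$ is irreducible, $V^{P_G}$ is an $\Autc(F)$-subrepresentation, hence equal to $0$ or $V$. So $\Hom_{P_G}(\one,V)=0$ is equivalent to $P_G$ acting nontrivially on $V$. This is exactly the stated criterion.

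\textbf{Main obstacle: existence.} The remaining task is the "if" part of the first sentence: producing at least one $V$ on which every such $P_G$ acts nontrivially. I would look at the subgroup $N$ generated by all the relevant $P_G$. The factors $\F_q^{\times}$ commute with everything, since each is a direct factor. For the unipotent factors, the ideals $I_{ij}=\Hom(F_{a_j,b_j},F_{a_i,b_i})$ are indexed by distinct pairs. I expect $N$ to be a product of these copies of $\F_q$ and $\F_q^{\times}$; the unipotent part is abelian if products $I_{ij}I_{i'j'}$ vanish, which I would check from the interval composition rule.

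Given that structure, I would choose a character $\chi$ of $N$ that is nontrivial on every factor. This needs $q>2$ for the $\F_q^{\times}$ factors; when $q=2$, the case $a_i=b_i$ forces $P_G$ trivial, so the criterion fails and that $F$ does not arise. If so, the statement needs the caveat that the diagonal factors are nontrivial, and I would note this. Any irreducible $V$ in $\Ind_N^{\Autc(F)}\chi$ then contains $\chi$ on restriction to $N$, so each $P_G$ acts nontrivially on $V$ by the normality argument above. The delicate point is verifying the independence and abelianness of the factors inside $N$ from the interval-composition rules.
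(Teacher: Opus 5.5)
Your proposal is correct and follows the paper's argument: the paper also combines the preceding lemmas, Theorem~\ref{thm:main}, Lemma~\ref{lem:functor.inclusion} and the normality of the $P_G$, and it obtains existence from a character, nontrivial on each factor, of the abelian product of the $P_G$'s. The abelianness check you left pending does go through: a nonzero composite $F_{a_{j'},b_{j'}}\to F_{a_j,b_j}\to F_{a_i,b_i}$ with $a_i=b_j$ and $a_j=b_{j'}$ would need $a_i\le b_{j'}$, i.e.\ $b_j\le a_j$, hence $a_j=b_j=a_i$, contradicting distinctness of the $a$'s. Your $q=2$ caveat is right and is an oversight in the stated proposition: if some $a_i=b_i$ then $P_G=\F_2^{\times}$ is trivial, so such $F$ never arises (e.g.\ $n=1$, $F=F_{1,1}$), and the first claim needs $q>2$ or no singleton summands.
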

\begin{proof} 
The only new thing in the proposition is the fact that if indeed $a_i\neq a_j$ and $b_i\neq b_j$ for all $i\neq j$ then such a representation exists. But this is straightforward, by considering the product of all the $P_G$ subgroups, which is again abelian in this case. 
\end{proof}
We can get rid of the singletons among the intervals $[a_i,b_i]$ without too much hassle. 
For a given functor $F=\bigoplus_{i=1}^m F_{a_i,b_i}$ we can assume, by reordering the intervals, that $[a_1,b_1],\ldots, [a_k,b_k]$ are not singletons and $[a_{k+1},b_{k+1}],\ldots,[a_m,b_m]$ are singletons (i.e. $a_{k+1}=b_{k+1},\ldots, a_m=b_m$). Write $F' = \bigoplus_{i=1}^k F_{a_i,b_i}$. Then $\Autc(F)\cong \Autc(F')\times (\F_q^{\times})^{m-k}$. Every irreducible representation $V$ of $F$ can be written uniquely as $V=V'\ot K_{\psi_1,\ldots, \psi_{m-k}}$, where $\psi_j:\F_q\to K$ is a one-dimensional character and $V'\in \Irr(\Autc(F'))$.  
Recall that $\Irr^{M,fm}(\Autc(F))$ are the irreducible representations of $\Autc(F)$ that arise from functor morphing of irreducible representations of $\Aut_R(M)$.
The above proposition gives the following:
\begin{corollary}
It holds that $V\in\Irr^{M,fm}(\Autc(F))$ if and only if $V'\in \Irr^{M,fm}(\Autc(\Irr(F'))$ and all the characters $\psi_j$ are non-trivial. 
\end{corollary}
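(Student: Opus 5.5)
The plan is to deduce the corollary directly from the preceding proposition, i.e.\ the characterization of $\Irr^{M,fm}(\Autc(F))$ for $F=\bigoplus_i F_{a_i,b_i}$ with all $a_i$ distinct and all $b_i$ distinct, by checking each of its conditions against the product decomposition $\Autc(F)\cong\Autc(F')\times(\F_q^{\times})^{m-k}$.

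First I verify this decomposition. For a singleton interval $[a_i,a_i]$ with $i>k$, a morphism $F_{a_i,a_i}\to F_{a_s,b_s}$ exists only if $a_i\le a_s\le a_i\le b_s$, forcing $a_s=a_i$ and hence $s=i$. Dually, a morphism $F_{a_s,b_s}\to F_{a_i,a_i}$ exists only if $a_s\le a_i\le b_s\le a_i$, forcing $b_s=a_i=b_i$ and hence $s=i$. So $\End_{\C_{+1}}(F)$ splits as $\End(F')\times\prod_{i>k}\F_q$, which gives the product decomposition of automorphism groups. An irreducible representation of a direct product is then uniquely an outer tensor product $V'\ot K_{\psi_1,\ldots,\psi_{m-k}}$, where each $\psi_j$ is a character of $\F_q^{\times}$.

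Next I go through the conditions of the proposition.
\begin{enumerate}
\item For a singleton $i>k$ we have $a_i=b_i$, and the condition says that $\Autc(F_{a_i,a_i})=\F_q^{\times}$ acts nontrivially on $V$. This factor acts through $\psi_{i-k}$ alone, so the condition is exactly $\psi_{i-k}\neq 1$.
\item No condition of the form $a_i=b_j$ with $i\neq j$ can involve a singleton index. If $i$ is a singleton then $a_i=b_i$, so $a_i=b_j$ forces $b_j=b_i$ and hence $j=i$. If $j$ is a singleton then $b_j=a_j$, so $a_i=b_j=a_j$ forces $i=j$.
\item The remaining conditions concern only non-singleton indices. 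They involve the subgroups $1+\Hom(F_{a_j,b_j},F_{a_i,b_i})\subseteq\Autc(F')$ and, if some $a_i=b_i$ occurs among them, the factors $\Autc(F_{a_i,a_i})$; for non-singletons, though, $a_i<b_i$, so the latter never occurs. These subgroups act on $V=V'\ot K_\psi$ only through $V'$, so $V$ satisfies them exactly when $V'$ does.
\end{enumerate}

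By the same proposition applied to $F'$, these remaining conditions on $V'$ are precisely the conditions for $V'\in\Irr^{M,fm}(\Autc(F'))$. The hypotheses transfer because the $a_i$ and $b_i$ among $i\le k$ are still pairwise distinct. Here the ambient $M$ is unchanged, and the proposition's criterion depends only on the set of intervals, so membership for $F'$ is governed by the same list of conditions. The only point that needs some care is this bookkeeping, namely confirming that the proposition's conditions for $F'$ coincide with the subset of $F$'s conditions on non-singleton indices. Item (2) above makes this precise, since no cross condition $a_i=b_j$ links a singleton index to any other index. Combining items (1)–(3) with this identification gives the stated equivalence.
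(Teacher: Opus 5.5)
Your proposal is correct and takes the paper's route: the paper simply says the corollary follows from the preceding proposition. You supply the bookkeeping it leaves implicit, namely the splitting $\Autc(F)\cong\Autc(F')\times(\F_q^{\times})^{m-k}$, the check that no condition $a_i=b_j$ with $i\neq j$ involves a singleton index, and the transfer of the remaining conditions from $V$ to $V'$, with the distinctness of the $a_i$ and of the $b_i$ assumed exactly as in the paper's context.
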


\begin{corollary} Let $F'=\bigoplus_{i=1}^k F_{a_i,b_i}$ where $a_i\neq b_i$ for all $i$. Write $l = |[n]\backslash\{a_1,b_1,\ldots, a_k,b_k\}|$.
Then there is a one-to-one correspondence $$\bigsqcup_F \Irr^{M,fm}(F)\cong \Irr^{M,fm}(F')\times \Irr((\F_q^{\times})^l),$$ where the union on the left hand side is taken over all functors $F$ of the form $F=F'\oplus \bigoplus_{j=1}^r F_{c_j,c_j}$ for $c_j\notin \{a_1,b_1,\ldots, a_k,b_k\}$. 
\end{corollary}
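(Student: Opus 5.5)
We will build the bijection by sorting the functors $F$ in the union according to which singleton summands they carry, and then invoke the previous corollary on each piece. Write $A=\{a_1,b_1,\ldots,a_k,b_k\}$ and $C=[n]\backslash A$, so that $|C|=l$. The proposition before that corollary says that $F=F'\oplus\bigoplus_{j=1}^r F_{c_j,c_j}$ can arise at all only when all left endpoints are distinct and all right endpoints are distinct. For the singleton summands this forces the $c_j$ to be pairwise distinct. It also forces $c_j\notin\{a_i\}$ and $c_j\notin\{b_i\}$, and this is already built into the hypothesis $c_j\notin A$. The relevant functors $F$ are therefore indexed by subsets $T=\{c_1,\ldots,c_r\}\subseteq C$, with $F_T=F'\oplus\bigoplus_{c\in T}F_{c,c}$. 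Any functor with a repeated $c_j$ has $\Irr^{M,fm}$ empty, so it contributes nothing to the union.

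For each $T$ we have $\Autc(F_T)\cong\Autc(F')\times(\F_q^{\times})^{T}$. By the preceding corollary, $V=V'\ot K_{\psi}$ lies in $\Irr^{M,fm}(\Autc(F_T))$ exactly when $V'\in\Irr^{M,fm}(\Autc(F'))$ and every character $\psi_c$, $c\in T$, is non-trivial. Here $\Irr^{M,fm}(\Autc(F'))$ is meant for $F'$ viewed with respect to the same $M$; for $F'$ the decomposition is $M=X'\oplus Z'$, where $Z'$ contains $M_c$ for every $c\in C$. Hence
$$\bigsqcup_T \Irr^{M,fm}(\Autc(F_T))\cong \Irr^{M,fm}(\Autc(F'))\times\bigsqcup_{T\subseteq C}\{\text{non-trivial characters of }(\F_q^{\times})^T\},$$
where a character of $(\F_q^{\times})^T$ is non-trivial if it is non-trivial on every factor.

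It remains to identify the second factor with $\Irr((\F_q^{\times})^C)$. Given a character $(\psi_c)_{c\in C}$ of $(\F_q^{\times})^C$, let $T=\{c\mid \psi_c\neq 1\}$ and send $(\psi_c)$ to the pair consisting of $T$ and $(\psi_c)_{c\in T}$. The inverse extends a character on $T$ by the trivial character on $C\backslash T$. This gives the bijection with $\Irr((\F_q^{\times})^l)$. In effect, a trivial character at $c$ means that the summand $F_{c,c}$ is absent, and a non-trivial character means it is present.

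The main point needing care is the bookkeeping that the criterion for $V'$ is independent of $T$. The subgroup conditions in the proposition concern coincidences $a_i=b_j$. Adding singletons $F_{c,c}$ with $c\notin A$ creates no new coincidences among the non-singleton intervals, and the only new coincidence, $a=b$ for the singleton itself, is exactly the condition that $\psi_c$ be non-trivial. So the condition on $V'$ is the same for every $T$ and agrees with the condition for $F'$ alone. This is precisely the content of the preceding corollary, so no genuine obstacle remains beyond checking this.
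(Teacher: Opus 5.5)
Your proposal is correct and follows essentially the same route as the paper: apply the preceding corollary to each $F=F'\oplus\bigoplus_{c\in T}F_{c,c}$, then match characters of $(\F_q^{\times})^l$ with pairs consisting of a subset $T$ and a character that is non-trivial on every factor of $(\F_q^{\times})^T$, via the support $\{c \mid \psi_c\neq 1\}$. Concretely, you extend by trivial characters in one direction and take $F=F'\oplus\bigoplus_{\psi_c\neq 1}F_{c,c}$ in the other, exactly as the paper does, and your extra bookkeeping (repeated $c_j$ contribute nothing; the condition on $V'$ does not depend on $T$) makes explicit points the paper leaves implicit.
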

\begin{proof}
The previous corollary shows how a representation of $\Autc(F)$ for $F=F'\oplus \bigoplus_{j=1}^r F_{c_j,c_j}$ gives a representation of $\Autc(F')\times(\F_q^{\times})^r$. We can extend this to a representation of $\Autc(F')\times (\F_q^{\times})^l$ by letting the extra factors of $\F_q^{\times}$ act trivially. In the other direction, if we have a representation $V=V'\otimes K_{(\psi_1,\ldots, \psi_l)}\in \Irr^{M,fm}(F')\times \Irr((\F_q^{\times})^l)$ then this can easily be seen to come from the functor $F= F' \bigoplus_{c:\psi_c\neq \epsilon} F_{c,c}$. 
\end{proof}

To summarise this discussion, in order to study the image of functor morphing for $B_n(\F_q)$ it is enough to study functors $F=\bigoplus F_{a_i,b_i}$ such that $a_i\neq b_i$ for all $i$, and $a_i\neq a_j$ and $b_i\neq b_j$ for $i\neq j$. We finish with examples for small values of $n$. 

When $n=2$ the only functors which do not contain singletons are $0$ and $F_{1,2}$. We cannot add singletons to $\{[1,2]\}$ and since $1\neq 2$ every irreducible representation of $\Autc(F_{1,2})\cong \F_q^{\times}$ arises by functor morphing. For the 0 functor we have $\Autc(0)=\{1\}$. We can add the singletons $[1,1]$ and $[2,2]$ and we get $\Irr^{M,fm}(0)\sqcup \Irr^{M,fm}(F_{1,1}) \sqcup \Irr^{M,fm}(F_{2,2}) \sqcup \Irr^{M,fm}(F_{1,1}\oplus F_{2,2})\cong \Irr((\F_q^{\times})^2)$. This example was given in Section 6 of \cite{CMO4}.

Consider now the case $n=3$. The only new example that we get here, that does not arise from an order preserving embedding $[1,2]\to [1,3]$, is $F_{1,2}\oplus F_{2,3}$. Indeed, the case of $F_{1,2}$, $F_{1,3}$, $F_{2,3}$ and functors that involve singletons can be deduced from the case $n=2$.  
We have $$\Autc(F_{1,2}\oplus F_{2,3}) = \{\begin{pmatrix} a & 0 \\ b & c \end{pmatrix}| a,c\in \F_q^{\times}, b\in \F_q\}.$$
An irreducible representation $V$ comes from functor morphing of $\Aut_R(M)$ if and only if the normal subgroup $N:=\{\begin{pmatrix} 1 & 0 \\ b & 1 \end{pmatrix} | b\in \F_q\}$ acts on it non-trivially. Using Clifford theory for $N$ we get $\Irr^{M,fm}(F_{1,2}\oplus F_{2,3})\cong \Irr(\F_q^{\times})$, because there is only one non-trivial $\Autc(\F_{1,2}\oplus F_{2,3})$-orbit in $\Irr(N)$, and its stabilizer in $\Autc(F_{1,2}\oplus F_{2,3})/N$ is given by $\{\begin{pmatrix} a & 0 \\ 0 & a \end{pmatrix}\}\cong \F_q^{\times}$.

For $n=4$ the only cases that do not arise from an order preserving embedding $[1,3]\to [1,4]$ are $F_{1,2}\oplus F_{3,4}$, $F_{1,3}\oplus F_{2,4}$, $F_{1,4}\oplus F_{2,3}$, $F_{1,2}\oplus F_{2,3}\oplus F_{3,4}$. In the first three cases the end points of the intervals do not intersect, and therefore functor morphing poses no restriction. We get $\Irr^{M,fm}(\Autc(F_{1,2}\oplus F_{3,4}))\cong \Irr^{M,fm}(\Autc(F_{1,4}\oplus F_{2,3}))\cong(\F_q^{\times})^2$ and $\Irr^{M,fm}(\Autc(F_{1,3}\oplus F_{2,4}))\cong B_2(\F_q)$, because there is a non-trivial morphism $F_{1,3}\to F_{2,4}$. 
By using Clifford Theory in a similar way to the case $F_{1,2}\oplus F_{2,3}$ when $n=3$ we get that $\Irr^{M,fm}(\Autc(F_{1,2}\oplus F_{2,3}\oplus F_{3,4}))\cong \Irr(\F_q^{\times})$. 
\section*{Acknowledgments}
I would like to thank my friends Tiah Thackston-Watson and Danny Kane for their hospitality during the writing of this manuscript. 



\begin{thebibliography}{abc}
\bibitem[CMO1]{CMO1} Tyrone Crisp, Ehud Meir and Uri Onn, A variant of Harish-Chandra functors. J. Inst.  Math.  Jussieu. Volume 18, Issue 5 (2019), 993-1049. arXiv:1607.04486. 
\bibitem[CMO2]{CMO2} Tyrone Crisp, Ehud Meir and Uri Onn, Principal series for general linear groups over finite commutative rings. Communications in Algebra, Volume 49, 2021 - Issue 11. arXiv:1704.05575. 
\bibitem[CMO3]{CMO3} Tyrone Crisp, Ehud Meir and Uri Onn, An inductive approach to representations of general linear groups over compact discrete valuation rings.  Advances in Mathematics 440 (2024) 109516 arXiv:2005.05553.
\bibitem[CMO4]{CMO4} Tyrone Crisp, Ehud Meir and Uri Onn, Functor morphing and representations of automorphism groups of modules. Representation Theory Volume 29, Pages 789-837 (2025). arXiv:2308.03248
\bibitem[H1]{Higman1} G. Higman, Enumerating p-groups. I. Inequalities, Proc. LMS 3 (1960), 24-30
\bibitem[M1]{meir1}Ehud Meir, Interpolations of monoidal categories and algebraic structures by invariant theory. Selecta Mathematica 29 (2023),
no. 58. arXiv:2105.04622
\bibitem[So1]{So1} Andrew Soffer, Combinatorics of conjugacy classes in $U_n(F_q)$, PhD thesis, UCLA, https://escholarship.org/uc/item/08q9k3gg
\bibitem[VA1]{VA4} A. Vera-Lopez and J. M. Arregi, Conjugacy classes in unitriangular matrices, Linear Algebra Appl. 370 (2003), 85-124.
\bibitem[Z1]{Zelevinsky} A. Zelevinsky, Representations of Finite Classical Groups: A Hopf Algebra Approach, Springer Lecture Notes in Mathematics (1981). 
\end{thebibliography}
\end{document}